\tikzset{%
    symbol/.style={%
        draw=none,
        every to/.append style={%
            edge node={node [sloped, allow upside down, auto=false]{$#1$}}}
    }
}
\newcommand{\pb}{\ar[dr,phantom,at start,"{\scalebox{1.7}{$\lrcorner$}}"]}
\newtheorem{thm}{Theorem}
\newtheorem{lemma}[thm]{Lemma}
\newtheorem{prop}[thm]{Proposition}
\newtheorem{coro}[thm]{Corollary}
\theoremstyle{definition}
\newtheorem{defn}{Definition}
\newtheorem*{conv}{Convention}
\newtheorem{counter}{Counterexample}
\newtheorem{example}[counter]{Example}
\newtheorem*{remark}{Remark}
\newcommand{\C}{\mathcal{C}}
\newcommand{\F}{\mathcal{F}}
\newcommand{\E}{\mathcal{E}}
\newcommand{\M}{\mathcal{M}}
\newcommand{\W}{\mathbb{W}}
\newcommand{\X}{\mathbb{X}}
\newcommand{\Y}{\mathbb{Y}}
\newcommand{\Z}{\mathbb{Z}}
\newcommand{\N}{\mathbb{N}}
\newcommand{\Q}{\mathbb{Q}}
\newcommand{\bP}{\mathbb{P}}
\renewcommand{\phi}{\varphi}
\newcommand{\RG}{\mathbf{RG}}
\newcommand{\MRG}{\mathbf{MRG}}
\newcommand{\Grpd}{\mathbf{Grpd}}
\newcommand{\Cat}{\mathbf{Cat}}
\newcommand{\Eq}{\mathbf{Eq}}
\newcommand{\Simp}{\mathbf{Simp}}
\newlist{thmlist}{enumerate}{1}
\setlist[thmlist]{label=(\arabic{thmlisti}), ref=\thethm.(\arabic{thmlisti}),noitemsep}
\title[Fundamental groupoids for simplicial objects in Mal'tsev categories]{Fundamental groupoids for simplicial objects in Mal'tsev categories}
\author{Arnaud Duvieusart}
\subjclass[2010]{Primary 18E10, 18E99, 18G30 ; Secondary 08B05, 18A32, 18A35}
\keywords{Mal'tsev categories, Internal groupoids, Simplicial objects, Galois Theory, Central extensions, Monotone-light factorization}
\address{Institut de Recherche en Mathématique et Physique\\
	Université catholique de Louvain\\
	Chemin du Cyclotron 2, 1348 Louvain-la-Neuve, Belgium}
\address{Institute of Mathematics\\	
	Czech Academy of Sciences\\
	\v{Z}itná 25, 115 67 Prague, Czech Republic}
\email{duvieusart@math.cas.cz}
\date{\today}
\begin{document}

\begin{abstract}
We show that the category of internal groupoids in an exact Mal'tsev category is reflective, and, moreover, a Birkhoff subcategory of the category of simplicial objects. We then characterize the central extensions of the corresponding Galois structure, and show that regular epimorphisms admit a relative monotone-light factorization system in the sense of Chikhladze. We also draw some comparison with Kan complexes. By comparing the reflections of simplicial objects and reflexive graphs into groupoids, we exhibit a connection with weighted commutators (as defined by Gran, Janelidze and Ursini).
\end{abstract}

\maketitle
\section*{Introduction}

Categorical Galois theory, as developed by G. Janelidze (\cite{J90,JK97,BJ01,J04}), is a general framework that allows the study of central extensions or coverings of the objects of a category. A large collection of examples has been given, ranging from the Galois theory of commutative rings of Magid (\cite{M74,CJM96}) and the theory of coverings of locally connected spaces to the central extensions of groups, Lie algebras, or, more generally, central extensions in exact Mal'tsev categories \cite{JK94}.

The main ingredient of this theory is the notion of \emph{Galois structure}, which is defined as an adjunction, with the right adjoint often taken to be fully faithful, and a class of morphisms in the codomain of the right adjoint, satisfying suitable conditions, in particular \emph{admissibility}, which amounts to the preservation by the left adjoint of certain pullbacks. For example, the inclusion of any Birkhoff subcategory of an exact Mal'tsev together with the class of regular epimorphisms always forms an admissible Galois structure (\cite{JK94}).

In \cite{BJ99}, Brown and Janelidze used this theory to describe what they called \emph{second order coverings} for simplicial sets, using the adjunction given by the nerve functor and the fundamental groupoid, and the class of Kan fibrations. In fact, they restriced their analysis to Kan complexes, as this condition implies the \emph{admissibility} of these objects for the corresponding Galois structure. Later Chikhladze introduced relative factorization systems, and showed that the induced relative factorization system for Kan fibrations is locally stable, so that the Galois structures induces a relative monotone-light factorization (\cite{C04}).

On the other hand, regular Mal'tsev categories were characterized in \cite{CKP93} as the categories in which the Kan condition holds for \emph{every} simplicial object, thus extending a theorem of Moore stating that the underlying simplicial set of a simplicial group is always a Kan complex. Moreover, regular epimorphisms in the category of simplicial objects then coincide with Kan fibrations. This suggests that the inclusion of groupoids into simplicial objects in any exact Mal'tsev category might induce an admissible Galois structure.

The main objective of this paper is to show that this is indeed the case, and, more precisely, that the category of groupoids in an exact Mal'tsev category is always a Birkhoff subcategory of the category of simplicial objects. The paper is organised as follows : we begin with some preliminaries, to fix notation and provide the background notions. We then construct the reflection of the category of simplicial objects into the subcategory of internal groupoids. Next, we characterize the central extensions for the induced Galois structure. In the next section we compare our construction with the homotopy relations for the simplices in a Kan complex, which are used to define its homotopy groupoid. Then we prove that the Galois structure admits a relative monotone-light factorization system. We end the paper with a discussion of reflexive graphs, seen as truncated simplicial objects.

\section{Preliminaries}

\subsection{Simplicial objects}

Let $\Delta$ denote the category of finite nonzero ordinals, with monotone functions as morphisms. For a given category $\C$, the category $\Simp(\C)$ of simplicial objects in $\C$ is the category of functors $\Delta^{op}\to \C$. Equivalently, an object $\X$ of $\Simp(\C)$ is a collection of objects $(X_n)_{n\in \mathbb{N}}$ together with face morphisms $d_i\colon X_n\to X_{n-1}$ for all $n>0$ and $0\leq i\leq n$, and degeneracy morphisms $s_i\colon X_n\to X_{n+1}$ for $n\geq 0$ and $0\leq i\leq n$, satisfying the following simplicial identities, whenever they make sense:
\begin{equation}\begin{cases}
d_id_j = d_{j-1}d_i & 0\leq i<j\leq n+1 \\
s_is_j = s_{j+1}s_i & 0\leq i\leq j \leq n \\
d_is_j = s_{j-1}d_i & 0\leq i<j\leq n-1 \\
d_is_j = 1 & i\in \{j,j+1\}\\
d_is_j = s_jd_{i-1} & 0\leq j<i-1\leq n-1.
\end{cases}\end{equation}
When necessary, we will write $d_i^\X$ or $s_i^\X$ to distinguish the face or degeneracy morphisms of different simplicial objects. A morphism $f\colon \X\to \Y$ in $\Simp(\C)$ is then a collection of morphisms $f_n\colon X_n\to Y_n$ that commute with face and degeneracy morphisms, in the sense that $d_i^\Y f_{n+1}=f_{n}d_i^\X$ and $s_i^\Y f_{n}=f_{n+1}s_i^\X$ for all $i,n$.

If $\X$ is a simplicial object, we will denote $Dec(\X)$ the \emph{décalage} of $\X$ \cite{I72}, which is the simplicial object $(X_{n+1})_{n\geq 0}$, whose face and degeneracies are the same as those of $\X$, without the last faces $d_{n+1}\colon X_{n+1}\to X_n$ and last degeneracies $s_{n}\colon X_{n}\to X_{n+1}$ for all $n\geq 1$. The simplicial identities imply that the morphisms $d_{n+1}\colon X_{n+1}\to X_n$ form a morphism of simplicial objects $\epsilon_\X\colon Dec(\X)\to \X$. Since all these morphisms are split (and thus regular) epimorphisms, $\epsilon$ is a regular epimorphism in $\Simp(\C)$, although it need not be a split epimorphism. Notice that $Dec$ defines an endofunctor of $\Simp(\C)$, and $\epsilon$ is a natural transformation from $Dec$ to the identity endofunctor.

$\Delta$ is a skeleton of the category of non-empty finite totally ordered sets, and there exists exactly one functor from the latter category to $\Delta$. In particular, since the poset $\mathcal{P}_{f,n.e.}(\N)$ of non-empty finite subsets of $\N$ (ordered by inclusion) can be seen as a subcategory of non-empty finite totally ordered sets, there is a unique functor $\Phi\colon \mathcal{P}_{f,n.e.}(\N)\to \Delta$ that maps any set with $n+1$ elements to $\{0,\dots,n\}$ and any inclusion map to an injective morphism in $\Delta$.

For a given simplicial object $\X$, and for every $n\geq 2$, one can consider the restriction of $\Phi$ to the poset of proper subsets of $\{0,1,\dots, n\}$; taking the opposite functor and composing with $\X\colon \Delta^{op}\to \C$ gives a diagram in $\C$. The limit of this diagram is the $n$-th \emph{simplicial kernel} of $\X$, and denoted $K_n(\X)$. In particular, we have morphisms $\mu_i\colon K_n(\X)\to X_{n-1}$ for $i=0,\dots,n$, satisfying $d_i\mu_j=d_{j-1}\mu_i$ for all $0\leq i<j\leq n$, and the morphisms $\mu_i$ are universal with this property. Thus the face morphisms $d_0,\dots, d_n\colon X_n\to X_{n-1}$ induce a canonical morphism $\kappa_n\colon X_n\to K_n(\X)$. Following \cite{EGoV12}, we say that $\X$ is exact at $X_{n-1}$ if $\kappa_n$ is a regular epimorphism, and exact if it is exact at $X_n$ for all $n\geq 1$.

Moreover, for every $n\geq 2$ and $0\leq k\leq n$, we can also restrict $\Phi$ to the poset of proper subsets of $\{0,\dots, n\}$ that contain $k$, and then compose the opposite functor with $\X$. The limit of this diagram is the object of $(n,k)$-horns $\Lambda^n_k(\X)$, and it is equipped with morphisms $\nu_i\colon \Lambda^n_k(\X)\to X_{n-1}$ for $0\leq i\leq n$ and $i\neq k$ that satisfy the identities $d_i\nu_j=d_{j-1}\nu_i$ for all $0\leq i<j\leq n$ and $i\neq k\neq j$, and are universal with this property. There is then also a canonical arrow $\lambda_k^n\colon X_n\to \Lambda_k^n(\X)$ induced by the face morphisms $d_i\colon X_n\to X_{n-1}$ for $i\neq k$, and $\X$ is said to satisfy the Kan property if all these morphisms are regular epimorphisms. Moreover, a morphism $f\colon \X\to \Y$ between simplicial objects is called a \emph{Kan fibration} if for all $n$ and $k$ the canonical arrow $\theta_{k}^n$ in the diagram

\begin{equation}
\begin{tikzcd} {X_n} \ar[drr, bend left=15,"{f_n}"]\ar[ddr,bend right=15,"{\lambda_k^n}"'] \ar[dr,"{\theta_k^n}" description] & & \\ & {\Lambda_k^n(\X)\times_{\Lambda_k^n(\Y)} Y } \pb \ar[r] \ar[d] & {Y_n} \ar[d,"{\lambda_k^n}"]\\ & {\Lambda_k^n(\X)} \ar[r,"{\Lambda_k^n(f)}"'] & \Lambda_k^n(\Y) \end{tikzcd}\label{eq:kan_diag}
\end{equation}
(where the inner square is a pullback) is a regular epimorphism.

For every $n\geq 1$, we denote $\Delta_n$ the full subcategory of $\Delta$ consisting of the ordinals with $n+1$ elements or less, and $\Simp_n(\C)$ the category of functors $\Delta_n^{op}\to \C$, whose objects we called $n$-truncated simplicial objects. The inclusion $\Delta_n \hookrightarrow \Delta$ then induces by precomposition the truncation functor $\Simp(\C)\to \Simp_n(\C)$.

An internal reflexive graph in $\C$ is simply a $1$-truncated simplicial object. A \emph{multiplicative graph} is then a reflexive graph endowed with a partial multiplication $m\colon X_1\times_{X_0}X_1\to X_1$ that is unital and compatible with the domain and codomain morphisms (\cite{CPP92}), and an internal category is a multiplicative graph whose multiplication is associative. All these conditions can be summarized by saying that an internal multiplicative graph is an object of $\Simp_2(\C)$, such that the square
\[\begin{tikzcd}X_{2}\ar[r,"d_2"]\ar[d,"d_0"']& X_{1}\ar[d,"d_0"]\\ X_{1} \ar[r,"d_{1}"']& X_{0}
\end{tikzcd}\]
is a pullback, and an internal category is an object of $\Simp_3(\C)$ such that the square above and the square
\[\begin{tikzcd}X_{3}\ar[r,"d_3"]\ar[d,"d_0"']& X_{2}\ar[d,"d_0"]\\ X_{2} \ar[r,"d_{2}"']& X_{1}
\end{tikzcd}\]
are pullbacks. Moreover, an internal category is an internal groupoid if and only if any of the squares
\[\begin{tikzcd}X_{2}\ar[r,"d_1"]\ar[d,"d_0"']& X_{1}\ar[d,"d_0"]\\ X_{1} \ar[r,"d_{0}"']& X_{0}
\end{tikzcd}\qquad \text{and}\qquad \begin{tikzcd}X_{2}\ar[r,"d_2"]\ar[d,"d_1"']& X_{1}\ar[d,"d_1"]\\ X_{1} \ar[r,"d_{1}"']& X_{0}
\end{tikzcd}\]
is a pullback. Internal functors are also the same thing as (restricted) simplicial morphisms. Moreover, any internal category can be extended to a simplicial object by simply taking its nerve. From now on we will thus consider $\Cat(\C)$ and $\Grpd(\C)$ as full subcategories of $\Simp(\C)$; more precisely, a simplicial object $\X$ is an internal category if and only if the commutative square
\[\begin{tikzcd} X_{n}\ar[r,"d_0"]\ar[d,"d_{n}"']& X_{n-1}\ar[d,"d_{n-1}"]\\ X_{n-1}\ar[r,"d_{0}"']& X_{n-2}\end{tikzcd}\]
is a pullback for all $n\geq 2$.

\subsection{Mal'tsev categories and higher extensions}

A finitely complete category $\C$ is called a \emph{Mal'tsev category} if every internal reflexive relation in $\C$ is an equivalence relation \cite{CKP93,CLP91,CPP92,BB04}; when $\C$ is a regular category, this condition holds if and only if the composition $R\circ S$ of two equivalence relations $R,S$ on the same object $X$ is an equivalence relation. When this is the case, $R\circ S$ is in fact the join of $R$ and $S$ in the poset of equivalence relations of $X$. Accordingly this poset is a lattice. In fact this is a modular lattice (\cite{CLP91}), i.e. we have the identity
\[R \vee (S\wedge T) = (R\vee S) \wedge T\]
for all equivalence relations $R,S,T$ on $X$ such that $R\leq T$.

An important property of Mal'tsev categories is that the inclusion of the category $\Grpd(\C)$ of internal groupoids into the category $ \MRG(\C)$ of multiplicative reflexive graphs is an isomorphism, and that the truncation functor $\MRG(\C)\to \RG(\C)$ is fully faithful (\cite{CPP92}).

For a variety, this is also equivalent to the existence of a ternary operation $p$ satisfying the equations $p(x,y,y)=x$ and $p(x,x,y)=y$. In particular, the categories of groups, $R$-modules, rings, Lie algebras and $C^*$-algebras are all examples of Mal'tsev categories; other examples include the category of Heyting algebras, any additive category, or the dual of any topos \cite{B96}.

In any regular category, a commutative square
\[\begin{tikzcd} X \ar[r,"g"] \ar[d,"f"'] & Z \ar[d,"h"] \\ Y \ar[r,"j"']& W
\end{tikzcd}\]
of regular epimorphisms is called a \emph{regular pushout} or \emph{double extension} if the canonical morphism $\langle f,g\rangle\colon X\to Y\times_W Z$ is a regular epimorphism (\cite{J91}). These double extensions are stable under pullback along a commutative square in any regular category.

\begin{prop}[\cite{B03}]\label{thm:reg_PO_split}
If $\C$ is a regular Mal'tsev category, then
\begin{itemize}
\item any square of the form
\[\begin{tikzcd} X\ar[r,"g"] \ar[d,"f", shift left] & Z\ar[d,"j", shift left] \\ Y \ar[r,"h"'] \ar[u,shift left, "s"] & W \ar[u,"t", shift left] \end{tikzcd}\]
where $hf=jg$, $gs=th$, $fs=1_Y$, $jt=1_W$ and $g,h$ are regular epimorphisms is a double extension;
\item a square of regular epimorphisms
\[\begin{tikzcd} X\ar[r,"g"] \ar[d,"f"'] & Z\ar[d,"j"] \\ Y \ar[r,"h"'] & W \end{tikzcd}\]
is a double extension if and only if $f(Eq[g])=Eq[h]$, i.e. if and only if the morphism $Eq[g]\to Eq[h]$ (where $Eq[g]$ and $Eq[h]$ denote the kernel pairs of $g$ and $h$ respectively) induced by $f$ and $j$ is a regular epimorphism.\end{itemize}\end{prop}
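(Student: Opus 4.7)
My plan is to prove part (2) first --- which contains the substantive content --- and then derive part (1) by a direct check of the hypothesis of (2). Throughout I rely on the defining property of regular Mal'tsev categories that the composition of equivalence relations is commutative, $Eq[f]\circ Eq[g]=Eq[g]\circ Eq[f]$, so that this composite coincides with the join $Eq[f]\vee Eq[g]$ in the lattice of equivalence relations on $X$.

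For (2) I begin by noting that the image of $\langle f,g\rangle\colon X\to Y\times_W Z$ is characterized via its kernel pair $Eq[f]\wedge Eq[g]$ as a monomorphism into $Y\times_W Z$, and that the square is a double extension precisely when this mono is an isomorphism. To analyse the image I pass to generalized elements and argue in two directions. For the implication ``double extension $\Rightarrow f(Eq[g])=Eq[h]$'': given a pair in $Eq[h]$, use regularity of $j$ to select a common $z$ above the shared value of $h$, then use surjectivity of $\langle f,g\rangle$ to lift the two resulting elements of $Y\times_W Z$ to elements of $X$ that share their $g$-image, yielding the desired element of $f(Eq[g])$. For the converse, given $(y,z)$ with $h(y)=j(z)$, first lift $z$ along $g$ to some $x'$ and set $y'=f(x')$; since $h(y')=jg(x')=j(z)=h(y)$, the pair $(y,y')$ lies in $Eq[h]=f(Eq[g])$, which produces some $(\tilde x,\tilde x')\in Eq[g]$ with $f(\tilde x)=y$ and $f(\tilde x')=y'$. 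The chain $\tilde x \sim_g \tilde x'\sim_f x'$ must then be rewritten as $\tilde x\sim_f x\sim_g x'$; this rearrangement is exactly the commutation $Eq[g]\circ Eq[f]=Eq[f]\circ Eq[g]$ granted by the Mal'tsev hypothesis, and the intermediate $x$ is the required lift of $(y,z)$. The automatic inclusion $f(Eq[g])\leq Eq[h]$, coming from $hf=jg$, is the easy direction.

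For (1), the split structure makes the verification of $f(Eq[g])=Eq[h]$ immediate and bypasses the Mal'tsev rearrangement: given any $(y,y')$ in $Eq[h]$, the pair $(s(y),s(y'))$ satisfies $g(s(y))=th(y)=th(y')=g(s(y'))$ by $gs=th$, hence lies in $Eq[g]$, while $fs=1_Y$ sends it back to $(y,y')$ under $f\times f$. Combined with the automatic inclusion $f(Eq[g])\leq Eq[h]$, this gives equality, and part (2) concludes that the square is a double extension.

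The main obstacle is the clean categorical implementation of the ``chase'' step in the backward direction of (2), where the informal generalized-element argument must be replaced by a span-level composition of relations together with regular covers witnessing each step. The Mal'tsev axiom enters precisely in identifying the two possible relational composites $Eq[g]\circ Eq[f]$ and $Eq[f]\circ Eq[g]$, which is exactly what enables the rearrangement; once this is in hand, the rest of the argument is a routine manipulation of images, kernel pairs, and pullbacks in a regular category.
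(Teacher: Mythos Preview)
The paper does not supply its own proof of this proposition: it is quoted as a background result with a citation to \cite{B03}, so there is nothing in the paper to compare your argument against directly.

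That said, your argument is correct and is essentially the standard one. A couple of small remarks. First, in deducing part~(1) from part~(2) you should note explicitly that $f$ and $j$, being split epimorphisms, are regular epimorphisms, so that the square really is a square of regular epimorphisms and part~(2) applies; you use this implicitly but never say it. Second, your phrase ``bypasses the Mal'tsev rearrangement'' is slightly misleading: the splitting only lets you verify the \emph{hypothesis} $f(Eq[g])=Eq[h]$ of part~(2) without Mal'tsev, but the conclusion that the square is a double extension still goes through the backward implication of~(2), which is precisely where the commutation $Eq[f]\circ Eq[g]=Eq[g]\circ Eq[f]$ is used. So Mal'tsev is still doing the work in part~(1), just one level removed. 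Finally, your acknowledgement that the element-chase must be implemented via spans and regular covers is appropriate; in a regular category these arguments are routine (or one may invoke Barr's embedding into a topos to justify the set-theoretic reasoning), so this is not a gap.
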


We can also define a \emph{triple extension} as a commutative cube
\[\begin{tikzcd}[column sep={3.6em,between origins},row sep={2.7em,between origins}]
X\ar[dr,"\alpha "']\ar[dd,"f"']\ar[rr,"g"] & & Z \ar[dr,"\gamma"]\ar[dd,"h", near end] & \\
& X' \ar[rr,"g'",crossing over,near start]& & Z'\ar[dd,"h'"]\\
Y \ar[rr,"p"',near end] \ar[dr,"\beta"'] & & W \ar[dr,"\delta"] & \\
& Y' \ar[from=uu,crossing over,"f'"',near start]\ar[rr,"p'"'] & & W'
\end{tikzcd}\]
for which all faces, as well as the induced commutative square

\[\begin{tikzcd} X \ar[r,"{\langle f,g\rangle}"] \ar[d,"\alpha"'] & Y\times_W Z \ar[d,"\beta\times_\delta \gamma"] \\ X' \ar[r,"{\langle f',g'\rangle}"']& Y'\times_{W'} Z',
\end{tikzcd}\]
are double extensions. Triple extensions satisfy the same properties as in \autoref{thm:reg_PO_split} : in particular, a split cube between double extensions is always a triple extension.

\subsection{Categorical Galois theory and monotone-light factorization systems}

We recall some definitions from \cite{JK94,JK97}.

A \emph{Galois structure} $\Gamma=(\C,\mathcal{X},I,U,\F)$ consists of a category $\C$, a full reflective subcategory $\mathcal{X}$ of $\C$, with reflector $I\colon \C\to\mathcal{X}$ and inclusion $U\colon \mathcal{X}\to \C$, and a class $\F$ of morphisms of $\C$ containing all isomorphisms, stable under pullbacks, closed under composition, and preserved by $I$. We will call the morphisms in $\F$ \emph{extensions}. Let us write, for any object $B$ of $\C$ (resp. of $\mathcal{X}$), $\C\downarrow B$ (resp. $\mathcal{X}\downarrow B$) for the full subcategory of the slice category $\C/B$ (resp. $\mathcal{X}/ B$) consisting of extensions $f\colon X\to B$. Then any arrow $p\colon E\to B$ induces a functor $p^*\colon \C\downarrow B\to \C\downarrow E$ defined by pulling back. If $p$ is an extension, this functor has a left adjoint $p_!$ defined by composition with $p$; the extension $p$ is said to be of \emph{effective $\F$-descent}, or simply a \emph{monadic extension}, if the functor $p^*$ is monadic.

Moreover, the reflector $I$ induces, for every $B$, a functor $I^B\colon \C\downarrow B\to \mathcal{X}\downarrow I(B)$ which maps $f\colon X\to B$ to $I(f)\colon I(X)\to I(B)$; and every such functor has a right adjoint $U^B\colon \mathcal{X}\downarrow I(B)\to \C\downarrow B$, defined for any $g\colon Y\to I(B)$ by the pullback
\[\begin{tikzcd} B\times_{I(B)}Y \pb\ar[d,"{U^B(g)}"']\ar[r]& Y\ar[d,"g"]\\ B\ar[r,"\eta_B"'] & I(B).\end{tikzcd}\]
The object $B$ is then said to be admissible if $U^B$ is fully faithful, which is equivalent to the reflector $I$ preserving all pullback squares of the form above. A Galois structure $\Gamma$ is said to be admissible if every object is admissible.

Given an admissible Galois structure, an extension $f\colon X\to B$ in $\C\downarrow B$ is said to be
\begin{itemize}
	\item\emph{trivial} if it lies in the replete image of $U^B$, or equivalently if the square
	\[\begin{tikzcd} X\ar[r,"\eta_X"] \ar[d,"f"'] & I(X) \ar[d,"I(f)"] \\ B\ar[r,"\eta_B"'] & I(B) \end{tikzcd}\]
	is a pullback;
	\item\emph{central}, or alternatively a \emph{covering}, if there exists a monadic extension $p\colon E\to B$ such that $p^*(f)$ is trivial;
	\item\emph{normal}, if it is a monadic extension and if $f^*(f)$ is a trivial extension (that is, if the projections of the kernel pair of $f$ are trivial).
\end{itemize}

\begin{example}
	If $\C$ is an exact Mal'tsev category, $\mathcal{X}$ is any Birkhoff (i.e. full reflective and closed under quotients and subobjects) subcategory of $\C$, and $\F$ is the class of regular epimorphisms, then the Galois structure $\Gamma$ is admissible, and moreover, every extension is monadic and every central extension is also normal (\cite{JK94}).
	
	When $\C$ is the category of groups and $\mathcal{X}$ the subcategory of abelian groups, the central extensions in this sense are exactly the surjective group homomorphisms whose kernel is included in the center of the domain (\cite{JK94}). More generally, in any exact Mal'tsev category with coequalizers, the central extensions of the Galois structure defined by the subcategory of abelian objects are the extensions such that the Smith-Pedicchio commutator $[Eq[f],\nabla_X]$ is trivial (\cite{G04}).
\end{example}

If $\Gamma$ is a Galois structure where $\F$ is the class of all morphisms in $\C$, admissibility is equivalent to the reflector $I$ being semi-left-exact in the sense of \cite{CHK85}. Any morphism $f\colon X\to B$ in $ \C$ induces a commutative diagram
\[\begin{tikzcd} X \ar[rrd, bend left=15, "\eta_X"] \ar[ddr, bend right=15, "f"']\ar[dr,"f'" description] & & \\ & {B\times_{I(B)} I(X)} \pb \ar[r] \ar[d,"f''"] & I(X) \ar[d,"I(f)"] \\ & B\ar[r,"\eta_B"'] & I(B); \end{tikzcd}\]
when the reflector $I$ is semi-left-exact, it preserves the pullback in this diagram, $I(f')$ is an isomorphism, and $f''$ is a trivial extension by definition. Moreover, in that case the classes $\E$ of morphisms inverted by $I$ and the class $\M$ of trivial extensions are orthogonal to one another, and thus the two classes form a factorisation system $(\E,\M)$ in $\C$ (\cite{CHK85}). The trivial extensions are then stable under pullbacks, but the class $\E$ does not have this property in general. In order to obtain a stable factorization system, one can localize $\M$ and stabilize $\E$, as in \cite{CJKP97}; this means that we replace $\E$ by the class $\E'$ of morphisms for which every pullback is in $\E$, and $\M$ by the class $\M^*$ of morphisms $f$ that are locally in $\M$, in the sense that there exists a monadic extension $p$ such that $p^*(f)\in \M$. In the context of Galois Theory these are precisely the central extensions. The two classes $\E'$ and $\M^*$ are orthogonal, but in general they do not form a factorization system. When this is the case, the resulting factorization system is called the \emph{monotone-light factorization system} $(\E',\M^*)$ associated with $\Gamma$.

In the case where $\F$ is no longer the class of all morphisms in $\C$, it need not be true that every morphism admits a $(\E,\M)$-factorization. Nevertheless, this is still true for extensions; it is then natural to extend the notion of factorization system to the case where only \emph{some} morphisms have a factorization. This was done by Chikhladze in \cite{C04} :
\begin{defn}
	If $\C$ is a category and $\F$ a class of morphisms of $\C$ containing the identities, closed under composition, and stable under pullbacks, a \emph{relative factorization system} for $\F$ consists of two classes $\E$ and $\M$ of morphisms such that
	\begin{enumerate}\item $\E$ and $\M$ contain identities and are closed under composition with isomorphisms;
		\item $\E$ and $\M$ are orthogonal to one another;
		\item $\M\subset \F$;
		\item every arrow $f$ in $\F$ can be written as $me$ for some $m\in \M$ and $e\in E$.
	\end{enumerate}
\end{defn}

Then any admissible Galois structure $\Gamma=(\C,\mathcal{X},I,\F)$ yields a relative factorization system for $\F$ with $\E$ and $\M$ consisting of the morphisms inverted by $I$ and the trivial extensions, respectively. When moreover this factorization system can be stabilized, then the stable factorization system $(\E',\M^*)$ (where $\E'$ is the class of all morphisms for which any pullback along an arrow in $\F$ is in $\E$) is called a \emph{relative monotone-light factorization system} for $\F$.

\begin{example}
	If $\C$ is the category of simplicial sets, $\mathcal{X}$ the category of groupoids, $I$ the fundamental groupoid functor, and $\F$ the class of Kan fibrations, then every Kan complex is an admissible object, and the central extensions were called \emph{second order coverings} in \cite{BJ99}.
	
	This Galois structure admits a relative monotone-light factorization system, as shown in \cite{C04}.
\end{example}

\begin{example}
	In a finitely complete category, any object $X$ has a corresponding discrete internal groupoid. This defines a fully faithful functor $D\colon \C\to \Grpd(\C)$. If $\C$ is exact, then this functor admits a semi-left-exact left adjoint $\Pi_0:\Grpd(\C)\to \C$ (\cite{B87}). When $\C$ is moreover Mal'tsev, $\C$ is in fact a Birkhoff subcategory of $\Grpd(\C)$, and the central extensions of the Galois structure $(\Grpd(\C),\C,\Pi_0,\F)$ (where $\F$ is the class of regular epimorphisms) are precisely the regular epimorphic discrete fibrations (\cite{G01}). This Galois structure admits a relative monotone-light factorization system (\cite{CEG18}).
\end{example}

\section{The reflection of simplicial objects into groupoids}

\begin{conv}
	For the remainder of this article, $\C$ will denote a regular Mal'tsev category. For a given simplicial object $(X_n)_{n\geq 0}$ with face morphisms $d_i\colon X_n\to X_{n-1}$ for $n\geq 1$ and $0\leq i\leq n$, we will denote by $D_i$ the kernel pair of $d_i$. When necessary, we will write $D_i^\X$ for the kernel pair of $d_i^\X$.
\end{conv}

Note that $\Simp(\C)$, being a functor category, is also regular Mal'tsev.  

\begin{lemma}\label{lem:reg_po_simp}
	If $\X$ is a simplicial object in $\C$, all the commutative squares given by $d_id_j=d_{j-1}d_i$ for $i<j$ are double extensions. Moreover, if $f\colon \X\to \Y$ is a regular epimorphism of simplicial objects, then the corresponding commutative cubes are triple extensions.
\end{lemma}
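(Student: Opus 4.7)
The plan is to deduce both parts of the lemma from \autoref{thm:reg_PO_split}, supplying the needed splittings from the degeneracies.

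For the first claim, fix $n \ge 1$ and $0 \le i < j \le n+1$ and examine the square expressing the simplicial identity $d_i d_j = d_{j-1} d_i$. All four maps involved are split (hence regular) epimorphisms, and one can pair appropriate degeneracies to obtain compatible sections: the natural choice $(s_j, s_{j-1})$ sections $d_j$ and $d_{j-1}$ respectively, and the compatibility $d_i s_j = s_{j-1} d_i$ is exactly the simplicial identity valid for $i < j$. The first bullet of \autoref{thm:reg_PO_split} then applies, yielding that the square is a double extension. A small case analysis handles the boundary $j = n+1$, where $s_j$ does not exist; there one can take instead $(s_{i-1}, s_{i-1})$ when $i \ge 1$, using the identity $d_j s_{i-1} = s_{i-1} d_{j-1}$ valid for $j > i$, or $(s_n, s_{n-1})$ when $i = 0$.

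For the second claim, I first note that a regular epimorphism in $\Simp(\C)$ is componentwise a regular epimorphism (since colimits in the functor category are computed pointwise), so each $f_n : X_n \to Y_n$ is a regular epi. The back and front faces of the cube are the simplicial identity squares for $\X$ and $\Y$, double extensions by the first claim. Each side face is a naturality square of $f$ against a face map: for example, $f_n d_i = d_i f_{n+1}$ admits $s_i$ as a common section of both copies of $d_i$, with $f_{n+1} s_i = s_i f_n$ by naturality of $f$, and horizontal arrows $f_{n+1}, f_n$ regular epis, so the first bullet of \autoref{thm:reg_PO_split} applies again. The remaining three side faces are treated analogously, using $s_i$, $s_j$, $s_{j-1}$ where appropriate.

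Finally, to upgrade ``all six faces are double extensions'' to ``the cube is a triple extension'', I appeal to the split-cube principle mentioned after \autoref{thm:reg_PO_split}. The cube is split in two of its three directions: along $d_i$ by $s_i$ and along $d_j/d_{j-1}$ by the degeneracies used in the first part; these sections commute with $f$ by naturality and with one another via the simplicial identity $s_i s_{j-1} = s_j s_i$ for $i < j$. I expect the only real obstacle to be notational, namely checking that this collection of sections assembles into a genuine split cube in the sense of the triple-extension analog of \autoref{thm:reg_PO_split}, after which the conclusion is immediate.
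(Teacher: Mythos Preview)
Your proposal is correct and follows essentially the same line as the paper's proof: both parts are deduced from \autoref{thm:reg_PO_split} (and its cube analogue) by exhibiting degeneracies as compatible sections. Your case split for the first claim is slightly different---you use $(s_j,s_{j-1})$ with the identity $d_is_j=s_{j-1}d_i$ whenever $j\le n$, whereas the paper uses $(s_{j-1},s_{j-2})$ and separates $i<j-1$ from $j=i+1$---but both are valid. For the cube, note that you actually need only \emph{one} split direction: viewing the cube as the morphism (in the $d_j$-direction) between the two $f$--$d_i$ naturality squares, each of which you have already shown is a double extension, suffices; the second splitting and the identity $s_is_{j-1}=s_js_i$ are not needed.
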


\begin{proof}
	If $i<j-1$, then we have a diagram
	\[\begin{tikzcd}X_{n+2}\ar[r,shift left,"d_j"]\ar[d,"d_i"']& X_{n+1}\ar[l, shift left,"s_{j-1}"]\ar[d,"d_i"]\\ X_{n+1} \ar[r,shift left,"d_{j-1}"]& X_{n},\ar[l,shift left,"s_{j-2}"]
	\end{tikzcd}\]
	where the two squares obtained by taking the horizontal arrows pointing to the right and to the left both commute (i.e. $d_id_j=d_{j-1}d_i$ and $d_is_{j-1}=s_{j-2}d_i$). On the other hand, if $j=i+1$, then at least one of the inequalities $1\leq j\leq n+2$ is strict, hence at least one of the diagrams
	\[\begin{tikzcd}
	X_{n+2}\ar[r,shift left,"d_{j}"]\ar[d,"d_i"']& X_{n+1}\ar[l, shift left,"s_j"]\ar[d,"d_i"]\\ X_{n+1} \ar[r,shift left,"d_{j-1}"]& X_{n}\ar[l,shift left,"s_{j-1}"]
	\end{tikzcd}
	\quad
	\begin{tikzcd}
	X_{n+2}\ar[r,"d_{j}"]\ar[d,"d_i",shift left]& X_{n+1}\ar[d,"d_i",shift left]\\ X_{n+1} \ar[u,shift left,"s_{i-1}"]\ar[r,"d_{j-1}"']& X_{n}\ar[u,shift left,"s_{i-1}"]
	\end{tikzcd}\]
	will similarly yield two commutative squares; in any case, the commutative square $d_id_j=d_{j-1}d_i$ is a double extension by \autoref{thm:reg_PO_split}.
	
	Moreover, any morphism $f\colon \X\to \Y$ of simplicial objects has to commute with the face and degeneracies; hence, when $f$ is a regular epimorphism, every square
	\[\begin{tikzcd}X_{n+1}\ar[r,"f_{n+1}"]\ar[d,"d_i"']& Y_{n+1} \ar[d,"d_i"]\\ X_{n} \ar[r,"f_{n}"']& Y_{n}
	\end{tikzcd}\]
	is a double extension. The resulting cube will then always be a split epimorphism between double extensions, hence a triple extension.
\end{proof}

\begin{remark}
	The pullback $X_1\times_{X_0}X_1$ of $d_0$ along $d_1$ coincides with the object of $(2,1)$-horns $\Lambda^2_1 (\X)$, and similarly the other two pullbacks $X_1\times_{X_0}X_1$, which define the kernel pairs of $d_0$ and $d_1$, coincide with the objects of $(2,2)$ and $(2,0)$-horns, respectively. In particular, \autoref{lem:reg_po_simp} shows that every simplicial object satisfies the Kan property and that every regular epimorphism is a Kan fibration for $2$-horns. The proof for the higher order horns can be done in the same way, using $n$-fold extensions for $n\geq 3$, as in \cite{EGoV12}.
\end{remark}

As a consequence we have
\begin{coro}
	If $f\colon \X\to \Y$ is a regular epimorphism in $\Simp(\C)$, then $f(D_i^\X \wedge D_j^\X)=D_i^\Y \wedge D_j^\Y$. Moreover, for any $i<j<k$, we have
	\[d_k(D_i\wedge D_j)=D_i\wedge D_j\]
	\[d_j(D_i\wedge D_k)=D_i\wedge D_{k-1}\]
	\[d_i(D_j\wedge D_k)=D_{j-1}\wedge D_{k-1}.\]
\end{coro}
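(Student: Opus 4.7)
The plan is to derive all four parts from \autoref{lem:reg_po_simp} (the triple-extension claim for regular epimorphisms of simplicial objects) together with \autoref{thm:reg_PO_split}, via cube arguments. For the first assertion I would apply \autoref{lem:reg_po_simp} to the cube obtained from the simplicial identity $d_id_j=d_{j-1}d_i$, taken at a fixed level in both $\X$ and $\Y$ and connected vertically by $f$. By the conclusion of that lemma, this cube is a triple extension, and therefore, by the definition of triple extension, the induced square
\[\begin{tikzcd}
X_{n+2} \ar[r,"{\langle d_i,d_j\rangle}"] \ar[d,"f_{n+2}"'] & X_{n+1}\times_{X_n}X_{n+1} \ar[d,"{f_{n+1}\times_{f_n}f_{n+1}}"] \\
Y_{n+2} \ar[r,"{\langle d_i,d_j\rangle}"'] & Y_{n+1}\times_{Y_n}Y_{n+1}
\end{tikzcd}\]
is a double extension. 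Since $D_i\wedge D_j$ is by construction the kernel pair of $\langle d_i,d_j\rangle$ at both source and target, the second statement of \autoref{thm:reg_PO_split} immediately yields $f_{n+2}(D_i^\X\wedge D_j^\X)=D_i^\Y\wedge D_j^\Y$.

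For each of the three remaining identities I would set up an analogous cube, but now with vertical edges being face maps of $\X$ rather than components of a simplicial morphism. For identity (1) the top face is the simplicial-identity square $d_id_j=d_{j-1}d_i$ at $X_{n+1}$, the bottom face is the same identity one dimension down at $X_n$, and the four vertical edges are $d_k,d_{k-1},d_{k-1},d_{k-2}$; commutativity of the four side faces reduces to $d_id_k=d_{k-1}d_i$ and $d_jd_k=d_{k-1}d_j$, both valid since $i<j<k$. Identities (2) and (3) are handled by the same template, with vertical edges $d_j,d_j,d_{j-1},d_{j-1}$ and $d_i,d_i,d_i,d_i$ respectively. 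Every face of such a cube is a double extension by \autoref{lem:reg_po_simp}, and an appropriate choice of degeneracies (for instance $s_k,s_{k-1},s_{k-1},s_{k-2}$ for identity (1), or $s_{i-1}$ at every vertex for identity (3) when $i\geq 1$) provides sections of the vertical edges whose compatibility with the horizontal face maps is exactly the content of the identities $d_as_b=s_{b-1}d_a$ (for $a<b$) and $d_as_b=s_bd_{a-1}$ (for $b<a-1$). Hence each cube is a split cube between double extensions, so by the property recalled just after the definition of triple extension it is itself a triple extension; extracting the induced double-extension square
\[\begin{tikzcd}
X_{n+1} \ar[r,"d_k"] \ar[d,"{\langle d_i,d_j\rangle}"'] & X_n \ar[d,"{\langle d_i,d_j\rangle}"] \\
X_n\times_{X_{n-1}}X_n \ar[r] & X_{n-1}\times_{X_{n-2}}X_{n-1}
\end{tikzcd}\]
(and its analogues for (2) and (3)) and invoking \autoref{thm:reg_PO_split} once more delivers the stated equality, exactly as in the first part.

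The main obstacle will be the combinatorial bookkeeping required to keep the chosen sections compatible with the horizontal faces across the whole range $i<j<k$. At a small number of boundary sub-cases (for example $i=j-1$ in identity (3), or $j=k-1$ in identity (1)) one of the collapsing identities $d_as_b=1$ kicks in and spoils the uniform choice of sections; I would handle these either by locally switching between $s_?$ and $s_{?-1}$, or, in the extreme case $i=0$ of identity (3), by applying the already-established first part of the corollary to the opposite d\'ecalage simplicial morphism $Dec'(\X)\to\X$, whose $n$-th component is $d_0^\X\colon X_{n+1}\to X_n$ and whose kernel pairs satisfy $D_i^{Dec'(\X)}=D_{i+1}^\X$; this directly yields the $i=0$ instance of identity (3) without any cube-level case analysis.
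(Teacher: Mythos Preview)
Your argument is correct, and for the first assertion it is exactly what the paper does. For the three face–map identities, however, the paper takes a shorter route that avoids the degeneracy bookkeeping you anticipate.

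Rather than building a separate cube for each identity and exhibiting a compatible splitting, the paper observes once and for all that $d_k$ is a component of the regular epimorphism $\epsilon_\X\colon Dec(\X)\to \X$ (more precisely, of an iterated décalage $\epsilon_{Dec^r(\X)}$), so that the single cube with edges $d_i,d_j,d_k$ is a triple extension directly by the second part of \autoref{lem:reg_po_simp}. It then reads off all three induced comparison squares from that one cube, implicitly using the symmetry of the triple–extension condition in a regular Mal'tsev category. This gives the three equalities simultaneously, with no index case analysis at all.

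Your strategy is perfectly sound and more elementary in that it never invokes that symmetry; the price is the boundary juggling you describe. It is worth noting that your fallback for the case $i=0$ of the third identity—applying the already–proved first part to the opposite décalage $Dec'(\X)\to\X$—is precisely the paper's trick, just with the other décalage. If you adopt that viewpoint uniformly (realise the ``vertical'' face map as a component of a simplicial regular epimorphism rather than hunting for compatible degeneracies), the case analysis disappears and you recover the paper's argument.
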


\begin{proof}
	By \autoref{lem:reg_po_simp} $f\colon \X\to \Y$ induces a triple extension. In particular the square
	\[\begin{tikzcd}X_{n}\ar[r,"{\langle d^\X_i,d^\X_j\rangle}"]\ar[d,"f_n"']& X_{n-1}\times_{X_{n-2}} X_{n-1} \ar[d] \\ Y_{n} \ar[r,"{\langle d^\Y_i,d^\Y_j\rangle}"'] & Y_{n-1}\times_{Y_{n-2}} Y_{n-1}
	\end{tikzcd}\]
	is a double extension, so that
	\[ f(D_i^\X \wedge D_j^\X) =f(Eq[\langle d^\X_i,d^\X_j\rangle]) = \langle d^\Y_i,d^\Y_j\rangle = D_i^\Y \wedge D_j^\Y.\]
	Moreover, $d_k$ is a component of the regular epimorphism $\epsilon_\X \colon Dec(\X) \to \X$, and thus the cube
	\[\begin{tikzcd}[column sep={3.6em,between origins},row sep={2.7em,between origins}] X_{n+3}\ar[dr,"d_k"']\ar[dd,"d_i"']\ar[rr,"d_j"] & & X_{n+2} \ar[dr,"d_{k-1}"]\ar[dd,"d_i", near end] & \\
	& X_{n+2} \ar[rr,"d_j",crossing over,near start]& & X_{n+1}\ar[dd,"d_i"]\\
	X_{n+2} \ar[rr,"d_{j-1}",near end] \ar[dr,"d_{k-1}"'] & & X_{n+1} \ar[dr,"d_{k-2}"] & \\
	& X_{n+1} \ar[from=uu,crossing over,"d_i"',near start]\ar[rr,"d_{j-1}"'] & & X_n\end{tikzcd}\]
	is a triple extension; in particular the squares
	\[\begin{tikzcd}X_{n+3}\rar["{\langle d_i,d_j\rangle}"]\ar[d,"d_k"']& X_{n+2}\times_{X_{n+1}} X_{n+2} \ar[d]\\ X_{n+2} \ar[r,"{\langle d_i,d_j\rangle}"'] & X_{n+1}\times_{X_{n}} X_{n+1}
	\end{tikzcd},
	\quad
	\begin{tikzcd}X_{n+3}\ar[r,"{\langle d_i,d_k\rangle}"]\ar[d,"d_j"']& X_{n+2}\times_{X_{n+1}} X_{n+2} \ar[d]\\ X_{n+2} \ar[r,"{\langle d_i,d_{k-1}\rangle}"'] & X_{n+1}\times_{X_{n}} X_{n+1}
	\end{tikzcd}\]
	and
	\[\begin{tikzcd}
	X_{n+3}\ar[r,"{\langle d_j,d_k\rangle}"]\ar[d,"d_i"']& X_{n+2}\times_{X_{n+1}} X_{n+2} \ar[d]\\ X_{n+2} \ar[r,"{\langle d_{j-1},d_{k-1}\rangle}"'] & X_{n+1}\times_{X_{n}} X_{n+1}
	\end{tikzcd}\]
	are all double extensions, which implies the desired equalities.
\end{proof}

\begin{lemma}\label{lem:H_1}
	For any simplicial object $\X$, the following equivalence relations in $X_1$ are all equal :
	\[d_0(D_1\wedge D_2)=d_1(D_0\wedge D_2)=d_2(D_0\wedge D_1).\]
\end{lemma}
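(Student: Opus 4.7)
The plan is to deduce the three equalities by applying the previous corollary at the level $X_3\to X_2$ and then pushing one more level down to $X_1$ via the face map $d_2$, using simplicial identities to recognize different compositional routes as the same morphism. A direct application of the corollary at the level $X_2\to X_1$ is not possible: the cube used in its proof involves four consecutive simplicial levels of $\X$, and there is no level below $X_0$ to complete such a cube when the target is $X_1$.

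With the triple $(i,j,k)=(0,1,2)$, the corollary gives three identifications
\[d_0(D_1^{X_3}\wedge D_2^{X_3}) \;=\; d_1(D_0^{X_3}\wedge D_2^{X_3}) \;=\; d_2(D_0^{X_3}\wedge D_1^{X_3}) \;=\; D_0^{X_2}\wedge D_1^{X_2}\]
of equivalence relations on $X_2$. Applying $d_2\colon X_2\to X_1$ to each equation collapses the common right-hand side to $d_2(D_0^{X_2}\wedge D_1^{X_2})$, which is the third expression in the lemma. On the three left-hand sides, the simplicial identities $d_2 d_0 = d_0 d_3$, $d_2 d_1 = d_1 d_3$, and $d_2 d_2 = d_2 d_3$ (as morphisms $X_3\to X_1$) allow me to rewrite the compositions as $d_0\circ d_3$, $d_1\circ d_3$, and $d_2\circ d_3$ applied to the same pairwise intersections of kernel pairs on $X_3$.

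A second use of the corollary, now with $k=3$, gives $d_3(D_i^{X_3}\wedge D_j^{X_3}) = D_i^{X_2}\wedge D_j^{X_2}$ for all $0\leq i<j<3$. Substituting these back, the three left-hand sides become $d_0(D_1^{X_2}\wedge D_2^{X_2})$, $d_1(D_0^{X_2}\wedge D_2^{X_2})$, and $d_2(D_0^{X_2}\wedge D_1^{X_2})$ respectively on $X_1$, yielding the desired chain of equalities. I do not expect any serious technical obstacle in executing this plan; the argument is essentially a bookkeeping exercise. The only conceptual subtlety is recognizing that the corollary alone does not reach the level $X_2\to X_1$, and that one must take the detour through $X_3$ and then come back down, using the simplicial identities to reconcile the two compositional orders of face maps.
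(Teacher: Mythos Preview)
Your proposal is correct and follows essentially the same approach as the paper: both arguments lift to the level $X_3$, apply the preceding corollary there, and then use simplicial identities to descend to $X_1$. The only difference is organizational: the paper proves one equality at a time (for instance, $d_0(D_1\wedge D_2)=d_0 d_2(D_1\wedge D_3)=d_1 d_0(D_1\wedge D_3)=d_1(D_0\wedge D_2)$, using two instances of the corollary and one simplicial identity), whereas you first identify all three expressions with $D_0^{X_2}\wedge D_1^{X_2}$ via the corollary with $(i,j,k)=(0,1,2)$, push down by $d_2$, and then use the corollary again with $k=3$ to rewrite each term. Your route is a bit more symmetric but uses more instances of the corollary; the paper's is shorter per equality.
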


\begin{proof}
	We prove the first identity; the other one is obtained in a similar way. Since $D_1\wedge D_2=d_2(D_1\wedge D_3)$ and $d_0(D_1\wedge D_3)=D_0\wedge D_2$, we have
	\[d_0(D_1\wedge D_2)=d_0(d_2(D_1\wedge D_3))=d_1(d_0(D_1\wedge D_3))=d_1(D_0\wedge D_2).\]
\end{proof}

\begin{defn}
	We will call $H_1(\X)$ the equivalence relation $d^\X_1(D_0^\X \wedge D_2^\X )$.
\end{defn}

\begin{prop}\label{prop:grpds_in_simp}
	Let $\X$ be a simplicial object in $\C$. Then for all $n\geq 2$ the following conditions are equivalent :
	\begin{enumerate}
		\item $D_i\wedge D_j=\Delta_{X_n}$ for all $0\leq i<j\leq n$;
		\item $D_0\wedge D_n=\Delta_{X_n}$;
		\item there exist $0\leq i < j\leq n$ such that $D_i\wedge D_j=\Delta_{X_n}$.
	\end{enumerate}
Moreover, $\X$ is an internal groupoid if and only if it satisfies these conditions for all $n\geq 2$.
\end{prop}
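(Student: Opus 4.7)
The plan is to establish the equivalence $(1) \Leftrightarrow (2) \Leftrightarrow (3)$ for each fixed level $n$ by induction on $n$, and then to deduce the groupoid characterization from Lemma \ref{lem:reg_po_simp}. Since $(1) \Rightarrow (2) \Rightarrow (3)$ is immediate, the content lies in $(3) \Rightarrow (1)$: assuming some $D_i \wedge D_j = \Delta_{X_n}$ with $i < j$, I need to show $D_{i'} \wedge D_{j'} = \Delta_{X_n}$ for every pair $i' < j'$.

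I would carry out the inductive step via a descent/ascent argument built on the identities of the preceding corollary. \emph{Descent}: applying any $d_k$ with $k \notin \{i,j\}$ to the assumed trivial pair yields, by the appropriate case of the corollary, some $D_{i''} \wedge D_{j''} = \Delta_{X_{n-1}}$; the inductive hypothesis then promotes this to the global fact that all pairs at level $n-1$ are trivial. The base case $n=2$ is handled directly by Lemma \ref{lem:H_1}, whose three identified equivalence relations on $X_1$ play exactly the role of the descent's ``global upgrade'' (since there is only one pair of indices at level $1$). \emph{Ascent}: for any $(i',j')$ at level $n$ and any $\ell \notin \{i',j'\}$, the corollary identifies $d_\ell(D_{i'} \wedge D_{j'})$ with a pair at level $n-1$, which is $\Delta_{X_{n-1}}$ by descent, so $D_{i'} \wedge D_{j'} \leq D_\ell$. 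Combining this with the trivial inclusion $D_{i'} \wedge D_{j'} \leq D_m$ for $m \in \{i',j'\}$, and applying the previous inequality with $\ell = i$ (when $i \notin \{i',j'\}$) and $\ell = j$ (when $j \notin \{i',j'\}$), I obtain $D_{i'} \wedge D_{j'} \leq D_i \wedge D_j = \Delta_{X_n}$.

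For the groupoid statement, Lemma \ref{lem:reg_po_simp} guarantees that each canonical morphism $\langle d_0, d_n\rangle \colon X_n \to X_{n-1} \times_{X_{n-2}} X_{n-1}$ is a regular epimorphism, so condition $(2)$ at level $n$ says precisely that it is also a monomorphism, hence an isomorphism. If the equivalent conditions hold at every $n \geq 2$, the squares from the preliminaries characterizing an internal category become pullbacks at every level, and at level $2$ condition $(1)$ additionally forces $\langle d_0, d_1\rangle$ to be an isomorphism, which by the preliminaries turns the internal category into an internal groupoid. The converse is immediate, since in a groupoid all canonical maps of this form are already isomorphisms. The main hurdle is the case-analysis bookkeeping in the ascent stage (the three separate formulas of the corollary depending on the relative order of $\ell,i',j'$), but once descent has supplied global triviality at level $n-1$ this reduces to tracking whether $\{i,j\}$ meets $\{i',j'\}$.
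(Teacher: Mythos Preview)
Your proposal is correct and follows essentially the same approach as the paper's proof: the same descent/ascent induction via the image identities from the preceding corollary, the same use of Lemma~\ref{lem:H_1} for the base case $n=2$, and the same reduction of the groupoid characterization to the double-extension property of Lemma~\ref{lem:reg_po_simp}. The only cosmetic difference is that you spell out the case analysis on whether $i,j$ lie in $\{i',j'\}$ more explicitly, whereas the paper absorbs this into a single inequality $d_i(D_r\wedge D_s)\leq D_{r'}\wedge D_{s'}$ (which is trivially $\Delta$ when $i\in\{r,s\}$).
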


\begin{proof}
	It is clear that $(1)$ implies $(2)$ and that $(2)$ implies $(3)$; we prove that the third implies the first by induction. We first consider the case where $n=2$; if $D_i\wedge D_j=\Delta_{X_2}$, and $k$ is such that $\{0,1,2\}=\{i,j,k\}$, we need to prove that $D_i\wedge D_k=\Delta_{X_2}$. We have $d_i(D_i\wedge D_k)=\Delta_{X_1}$ and
	\[d_j(D_i\wedge D_k)=d_k(D_i\wedge D_j)=\Delta_{X_1}\]
	by \autoref{lem:H_1}, and thus $D_i\wedge D_k\leq D_i\wedge D_j=\Delta_{X_2}$.
	
	Assuming now that the condition holds for $n$, we prove that it holds for $n+1$. Assume that $D_i\wedge D_j=\Delta_{X_{n+1}}$; then taking images by $d_k$ (for $k\notin \{i,j\}$) on both sides shows that $D_{i'}\wedge D_{j'}=\Delta_{X_n}$ for some $i',j'$, and thus, by the induction hypothesis, for all $i',j'$. In particular, for any $0\leq r<s\leq n+1$, we have for some $r',s'$
	\[d_i(D_r\wedge D_s) \leq D_{r'}\wedge D_{s'}=\Delta_{X_n},\]
	so that $D_r\wedge D_s\leq D_i$; and similarly $D_r\wedge D_s\leq D_j$, so that $D_r\wedge D_s=\Delta_{X_{n+1}}$.
	
	Now $\X$ is an internal groupoid if and only if the squares
	\[\begin{tikzcd} X_{n}\ar[r,"d_0"]\ar[d,"d_{n}"']& X_{n-1}\ar[d,"d_{n-1}"]\\ X_{n-1}\ar[r,"d_{0}"']& X_{n-2}\end{tikzcd}\]
	are all pullbacks. Since we know already that they are all double extensions, this is equivalent to the fact that the pair $d_0,d_n$ is jointly monic, and this is equivalent to $(2)$. Thus any internal category always satisfies the second condition, and conversely any simplicial object satisfying the first one is an internal category where the square
	\[\begin{tikzcd} X_{2}\ar[r,"d_0"]\ar[d,"d_{1}"']& X_{1}\ar[d,"d_{0}"]\\ X_{1}\ar[r,"d_{0}"']& X_{0}\end{tikzcd}\]
	is a pullback. This condition is equivalent to the internal category being a groupoid.
\end{proof}

Note that in the above proof we only needed to know that $\X$ was an internal category to prove that it satisfied the conditions; so this gives us a new proof of the fact that any internal category in a regular Mal'tsev category is an internal groupoid.

\begin{coro}\label{coro:closednessqs}
	The subcategory $\Grpd(\C)$ of $\Simp(\C)$ is closed under quotients and subobjects.
\end{coro}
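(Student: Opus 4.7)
The plan is to reduce both closure properties to the characterization obtained in Proposition~\ref{prop:grpds_in_simp}: a simplicial object $\X$ is an internal groupoid if and only if $D_0^\X \wedge D_n^\X = \Delta_{X_n}$ for every $n\geq 2$. Once phrased this way, closure under quotients follows from the direct-image formula for meets of kernel pairs established in the corollary preceding \autoref{lem:H_1}, while closure under subobjects follows from the dual observation that kernel pairs of face maps pull back along monomorphisms of simplicial objects.

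For quotients, I would take a regular epimorphism $f\colon \X \to \Y$ in $\Simp(\C)$ with $\X$ a groupoid. The corollary in question gives $f_n(D_0^\X \wedge D_n^\X) = D_0^\Y \wedge D_n^\Y$. Since $\X$ is a groupoid, the left-hand side is $f_n(\Delta_{X_n})$, and the direct image of the diagonal along a regular epimorphism is the diagonal of the codomain. Thus $D_0^\Y \wedge D_n^\Y = \Delta_{Y_n}$ for all $n \geq 2$, and so $\Y$ is a groupoid by \autoref{prop:grpds_in_simp}.

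For subobjects, I would take a monomorphism $m\colon \X \to \Y$ in $\Simp(\C)$ with $\Y$ a groupoid. Monomorphisms in a functor category are computed pointwise, so each $m_n$ is monic in $\C$. Because $m_{n-1} d_i^\X = d_i^\Y m_n$ with $m_{n-1}$ monic, the kernel pair $D_i^\X$ is exactly the inverse image of $D_i^\Y$ along $m_n$ viewed as a subobject of $X_n \times X_n$. Inverse images preserve finite meets, so $D_0^\X \wedge D_n^\X$ is the inverse image of $D_0^\Y \wedge D_n^\Y = \Delta_{Y_n}$ along the monomorphism $m_n$, and pulling back a diagonal along a mono yields the diagonal of the domain. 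Hence $D_0^\X \wedge D_n^\X = \Delta_{X_n}$, and $\X$ is a groupoid.

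No genuine obstacle arises here: the nontrivial content is already packaged into the characterization of \autoref{prop:grpds_in_simp} and the direct-image identity for regular epimorphisms; the only minor point requiring care is the componentwise description of monos and the compatibility of inverse images with the meet of kernel pairs, which is a general fact about regular categories.
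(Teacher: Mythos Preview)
Your argument is correct. For quotients it is exactly the paper's argument: the identity $f(D_i^\X\wedge D_j^\X)=D_i^\Y\wedge D_j^\Y$ from the corollary preceding \autoref{lem:H_1} transports the triviality of the meets from $\X$ to $\Y$.

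For subobjects the two arguments diverge slightly in tactics. The paper looks at the comparison map $\langle d_i,d_j\rangle\colon X_n\to X_{n-1}\times_{X_{n-2}}X_{n-1}$ and observes that in the square
\[
\begin{tikzcd}
X_n\ar[r,"m_n"]\ar[d,"{\langle d_i,d_j\rangle}"'] & Y_n\ar[d,"\cong"]\\
X_{n-1}\times_{X_{n-2}}X_{n-1}\ar[r,"\widetilde m"'] & Y_{n-1}\times_{Y_{n-2}}Y_{n-1}
\end{tikzcd}
\]
the right vertical is an isomorphism and the horizontals are monic, forcing the left vertical to be monic; combined with \autoref{lem:reg_po_simp} (which makes it a regular epi) this gives an isomorphism. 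Your route instead stays at the level of equivalence relations: you pull back $D_i^\Y$ along $m_n$ to get $D_i^\X$, use that inverse images preserve meets, and note that the inverse image of $\Delta_{Y_n}$ along a mono is $\Delta_{X_n}$. Your version is marginally more elementary in that it does not invoke the double-extension property of \autoref{lem:reg_po_simp}; the paper's version, on the other hand, lands directly on the pullback description of groupoids without passing through the equivalence in \autoref{prop:grpds_in_simp}. Both are short and valid.
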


\begin{proof}
All the intersections that characterize internal groupoids in \autoref{prop:grpds_in_simp} are preserved by regular epimorphisms of simplicial objects, which shows that groupoids are closed under quotients. Furthermore, they are also closed under subobjects; indeed, if $m\colon \X\to \Y$ is a monomorphism in $\Simp(\C)$ with $\Y$ a groupoid, then for any $0\leq i<j\leq n$, the cube induced by the identity $d_id_j=d_{j-1}d_i$ and $m$ yields a commutative square
\[\begin{tikzcd}
X_n\ar[r,"m_n"] \ar[d,"{\langle d_i,d_j\rangle}"'] & Y_n \ar[d,"{\langle d_i, d_j \rangle}"]\\ X_{n-1}\times_{X_{n-2}} X_{n-1} \ar[r,"\widetilde{m}"']  & Y_{n-1}\times_{Y_{n-2}} Y_{n-1} \end{tikzcd}\]
where the horizontal arrows are monomorphisms and the right-hand vertical side is an isomorphism, and thus the left-hand vertical side is a monomorphism. Since it is also a regular epimorphism (by \autoref{lem:reg_po_simp}), this means $\langle d_i,d_j\rangle$ is an isomorphism; hence $\X$ is an internal groupoid.
\end{proof}

\begin{remark}
In fact \autoref{coro:closednessqs} also characterizes Mal'tsev categories among the regular (or even finitely complete) ones : indeed a reflexive relation $R\hookrightarrow X\times X$ is just a subobject of the reflexive graph $(X\times X,X,\pi_1,\pi_2,\delta_X)$, and by taking iterated simplicial kernels, one can extend this to a monomorphism in $\Simp(\C)$, whose codomain is just the nerve of the indiscrete equivalence relation/groupoid on $X$. Thus every reflexive relation is a subobject of a groupoid, and a relation is a groupoid if and only if it is an equivalence relation. Accordingly :
\end{remark}

\begin{coro}A regular category $\C$ is a Mal'tsev category if and only if $\Grpd(\C)$ is closed under subobjects in $\Simp(\C)$.
\end{coro}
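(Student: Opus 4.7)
The forward direction is the previous corollary, so I only need to prove the converse: assume $\Grpd(\C)$ is closed under subobjects in $\Simp(\C)$ and deduce that every reflexive relation $R\hookrightarrow X\times X$ in $\C$ is an equivalence relation.

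Following the strategy outlined in the preceding remark, the plan is to realize $R\hookrightarrow X\times X$ as the degree-$1$ piece of a simplicial monomorphism $\mathbb{S}\hookrightarrow\mathbb{E}$, where $\mathbb{E}$ is the nerve of the indiscrete equivalence relation on $X$ (with $E_n=X^{n+1}$), which is manifestly an internal groupoid. I would construct $\mathbb{S}$ inductively, starting from the $1$-truncated simplicial object $R\rightrightarrows X$ and setting $S_n := K_n$ of the previously built $(n-1)$-truncation, with face maps $\mu_i$ coming from the universal property of the simplicial kernel and degeneracies defined via the same universal property applied to appropriate degenerate boundaries built from lower-dimensional data. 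Performing the same construction on $X\times X\rightrightarrows X$ recovers $\mathbb{E}$, and functoriality of $K_n$ in its input extends the given inclusion of reflexive graphs to a simplicial morphism $\mathbb{S}\to\mathbb{E}$, which is a levelwise monomorphism since a limit of monomorphisms is a monomorphism.

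The hypothesis then forces $\mathbb{S}$ to be an internal groupoid. Its underlying reflexive graph is still $R\rightrightarrows X$, and a groupoid (indeed category) structure on a reflexive graph whose total arrow $\langle d_0,d_1\rangle\colon R\to X\times X$ is monic is exactly an internal equivalence relation on $X$. Hence $R$ is an equivalence relation and $\C$ is Mal'tsev. The main technical obstacle is the bookkeeping in the inductive construction of $\mathbb{S}$ and $\mathbb{E}$: producing a genuine simplicial object whose degeneracies and faces satisfy all simplicial identities, and checking functoriality of this iterated simplicial kernel construction. Both are standard applications of the universal property of $K_n$, independent of any Mal'tsev hypothesis, but they are where the detailed work lies.
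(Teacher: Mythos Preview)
Your proposal is correct and follows exactly the approach sketched in the paper's remark preceding the corollary: extend the reflexive relation and the indiscrete relation on $X$ to simplicial objects via iterated simplicial kernels, obtain a simplicial monomorphism into a groupoid, and conclude. The paper offers no more detail than this remark, so your write-up is in fact more thorough; the only minor imprecision is the parenthetical ``(indeed category)'', since a mere category structure on a reflexive relation yields only a preorder, whereas you need the groupoid structure to obtain symmetry.
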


\begin{conv}
For the remainder of this article, we assume that the category $\C$ is exact.
\end{conv}

In this setting, we have

\begin{thm}\label{thm:adjoint}
	If $\X$ is a simplicial object in $\C$, then the quotient $\frac{X_1}{H_1(\X)}$ and the object $X_0$ admit a groupoid structure
	\begin{equation}\label{eq:structure}
		\begin{tikzcd}
			\frac{X_1}{H_1(\X)} \ar[r,shift left=2.2, "\overline{d_0}"] \ar[r,shift right=2.2,"\overline{d_1}"'] & \ar[l,"\overline{s_0}"description] X_0 ;
		\end{tikzcd}
	\end{equation}
	and taking the nerve of this groupoid defines a functor $\Pi_1\colon \Simp(\C)\to\Grpd(\C)$, which is left adjoint to the inclusion $\Grpd(\C)\to \Simp(\C)$.
	
	In particular, $\Grpd(\C)$ is a Birkhoff subcategory of $\Simp(\C)$.
\end{thm}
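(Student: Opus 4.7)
The plan proceeds in three stages. First, I verify that $d_0, d_1 \colon X_1 \to X_0$ and $s_0$ descend through the quotient $q \colon X_1 \to X_1/H_1$: given a generalised element $(u, v) \in D_0 \wedge D_2$ on $X_2$, the simplicial identities $d_0 d_1 = d_0 d_0$ and $d_1 d_1 = d_1 d_2$ (instances of $d_i d_j = d_{j-1} d_i$), combined with $d_0 u = d_0 v$ and $d_2 u = d_2 v$, immediately force $d_0 d_1 u = d_0 d_1 v$ and $d_1 d_1 u = d_1 d_1 v$. This yields the induced maps $\overline{d_0}, \overline{d_1}$; with $\overline{s_0} \coloneqq q \circ s_0$ and the identities $d_0 s_0 = d_1 s_0 = 1$, one recovers the reflexive graph~\eqref{eq:structure}.

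Next I endow this reflexive graph with a multiplication. The composite $\pi \coloneqq (q \times_{X_0} q) \circ \langle d_2, d_0\rangle \colon X_2 \to (X_1/H_1) \times_{X_0} (X_1/H_1)$ is a regular epimorphism by \autoref{lem:reg_po_simp}, and I define $m$ as the factorisation of $q d_1 \colon X_2 \to X_1/H_1$ through $\pi$. This reduces to showing $Eq(\pi) \le Eq(q d_1)$, i.e.\ that $d_0 x \sim_{H_1} d_0 y$ together with $d_2 x \sim_{H_1} d_2 y$ force $d_1 x \sim_{H_1} d_1 y$; this is the main technical obstacle. My approach is to split the implication into the two ``one-sided'' cases (where only $d_0$, respectively only $d_2$, varies), linked by transitivity through an intermediate $2$-simplex $z$ with $d_2 z = d_2 x$ and $d_0 z = d_0 y$ produced by Kan-filling a $\Lambda^2_1$-horn (which is possible since source/target compatibility follows from the already-established descent of $d_0, d_1$). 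Each one-sided case I then handle by lifting a suitable $\Lambda^3_k$-horn in $X_3$ built from $x$, the given witness of the $H_1$-equivalence in $D_0 \wedge D_2$ (or in $D_1 \wedge D_2$ or $D_0 \wedge D_1$, exploiting the alternative expressions of \autoref{lem:H_1}), and an auxiliary $2$-simplex obtained by a further horn fill; the face-image identities of the corollary following \autoref{lem:reg_po_simp} should then translate the data of the filled $3$-simplex into an element of $D_0 \wedge D_2$ on $X_2$ witnessing $d_1 x \sim_{H_1} d_1 y$.

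Once $m$ is available, the unit axioms descend from $d_1 s_0 = d_1 s_1 = 1$, making $(X_0, X_1/H_1, m)$ a multiplicative reflexive graph and hence, by the equivalence $\MRG(\C) = \Grpd(\C)$ in the Mal'tsev setting, an internal groupoid. I define $\Pi_1(\X)$ to be its nerve, and the unit $\eta_\X \colon \X \to \Pi_1(\X)$ level-wise: the identity on $X_0$, the quotient $q$ on $X_1$, and at level $n$ the tuple of ``consecutive edges'' $X_n \to X_1$ post-composed with $q$. For the adjunction, given a simplicial morphism $f \colon \X \to \G$ with $\G$ a groupoid, \autoref{prop:grpds_in_simp} gives $H_1(\G) = d_1(\Delta_{G_2}) = \Delta_{G_1}$, so $f_1$ coequalises $H_1$ and factors uniquely through $q$; this factorisation then extends uniquely to higher levels because $\G_n$ is determined by $G_0, G_1$ and composition. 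Finally, $\eta_\X$ is a regular epimorphism at every level by \autoref{lem:reg_po_simp} and its higher-horn analogues, and \autoref{coro:closednessqs} supplies closure of $\Grpd(\C)$ under subobjects, which together establish $\Grpd(\C)$ as a Birkhoff subcategory of $\Simp(\C)$.
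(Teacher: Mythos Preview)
Your overall architecture matches the paper's: descend $d_0,d_1,s_0$ through the quotient, build a multiplication by showing that $q d_1$ coequalises the kernel pair of $\pi=(q\times_{X_0} q)\circ\langle d_0,d_2\rangle$, invoke $\MRG(\C)=\Grpd(\C)$, then verify the universal property and the Birkhoff conditions exactly as you indicate. The divergence lies entirely in how you establish the key inclusion $Eq[\pi]\le d_1^{-1}(H_1)$.

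The paper does this without any horn-filling. It observes that $Eq[\pi]=d_0^{-1}(H_1)\wedge d_2^{-1}(H_1)=(D_0\vee(D_1\wedge D_2))\wedge(D_2\vee(D_0\wedge D_1))$ and then uses only the \emph{modularity} of the lattice of equivalence relations on $X_2$ (a general feature of Mal'tsev categories) to simplify this to $(D_0\wedge D_1)\vee(D_0\wedge D_2)\vee(D_1\wedge D_2)$. Since $d_1$ sends each of these three meets to $H_1$ by \autoref{lem:H_1}, one line yields $d_1(Eq[\pi])=H_1$. This algebraic computation also gives, as a by-product, the explicit formula $H_2(\X)=\bigvee_{i<j}(D_i\wedge D_j)$ which the paper exploits later.

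Your route is the classical simplicial-homotopy argument: split via an intermediate $z$ obtained from a $\Lambda^2_1$-fill, then handle each one-sided case by filling a $3$-horn. This is viable---it is essentially how one shows composition is well-defined in the homotopy category of a Kan complex---but in a regular category every ``there exists a filler'' has to be read as ``after passing to a regular-epimorphic cover'', and you must track two or three such covers through nested fills. As written, your one-sided step is still a plan rather than a proof: you have not specified which $k$, which faces are assigned to $x$, the witness pair, and the auxiliary simplex, nor checked the compatibility $d_i a_j=d_{j-1}a_i$; and the corollary you cite gives identities between \emph{images of relations}, which do not by themselves supply the individual lifts your argument needs. If you pursue this route you should write out at least one of the two one-sided cases in full; alternatively, the paper's three-line modularity computation is both shorter and yields extra information used downstream.
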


\begin{proof}
Note first that since by definition $H_1(\X)\leq D_0\wedge D_1$, $d_0$ and $d_1\colon X_1\to X_0$ factor through the coequalizer $\eta_1$ of $H_1(\X)$ as $\overline{d_0}\eta_1$ and $\overline{d_1}\eta_1$ respectively, and $\overline{d_0}$ and $\overline{d_1}$ have a common section $\eta_1s_0$, which we will also denote $\overline{s_0}$, so that we get a morphism of reflexive graphs
\[\begin{tikzcd} X_1 \ar[rr,"\eta_1"] \ar[dr,shift left,"d_1"] \ar[dr,shift right,"d_0"'] & & \frac{X_1}{H_1(\X)} \ar[dl,shift left,"\overline{d_1}"] \ar[dl,shift right,"\overline{d_0}"'] \\ & X_0. &\end{tikzcd}\]

Let us then form the pullback
\[\begin{tikzcd}\frac{X_1}{H_1(\X)}\times_{X_0}\frac{X_1}{H_1(\X)} \pb \ar[d,"\overline{d_0}"']\ar[r,"\overline{d_2}"] & \frac{X_1}{H_1(\X)}\ar[d,"\overline{d_0}"]\\ \frac{X_1}{H_1(\X)}\ar[r,"\overline{d_1}"']& X_0; \end{tikzcd}\]
to prove that the reflexive graph \eqref{eq:structure} is a groupoid, it suffices to prove that there exists a morphism $\overline{d_1}\colon \frac{X_1}{H_1(\X)} \times_{X_0} \frac{X_1}{H_1(\X)}$ that satisfies the relevant identities.

By Proposition 4.1 in \cite{B03}, $\eta_1\times \eta_1\colon X_1\times_{X_0}X_1\to \frac{X_1}{H_1}\times_{X_0}\frac{X_1}{H_1}$ is a regular epimorphism, and as a consequence so is
\[\langle \eta_1 d_0,\eta_1d_2\rangle=(\eta_1\times \eta_1)\circ \langle d_0,d_2\rangle \colon X_2\to \frac{X_1}{H_1(\X)}\times_{X_0}\frac{X_1}{H_1(\X)},\]
which we will denote $\eta_2$. We also define $H_2(\X)=Eq[\eta_2]$. Now to prove the existence of $\overline{d_1}$, we need to show that $\eta_1d_1\colon X_2\to \frac{X_1}{H_1(\X)}$ factorizes through $\eta_2$; for this it is enough to show that $\eta_1d_1(H_2(\X))=\Delta_{X_2}$, which is equivalent to $d_1(H_2(\X))\leq H_1(\X)$. Since $\overline{d_0}$ and $\overline{d_2}$ are jointly monic by construction, we find that
\begin{align*}H_2(\X) & =d_0^{-1}(H_1(\X))\wedge d_2^{-1}(H_1(\X)) \\ & = d_0^{-1}(d_0(D_1\wedge D_2))\wedge d_2^{-1}(d_2(D_0\wedge D_1))\\ & =(D_0\vee(D_1\wedge D_2))\wedge (D_2\vee (D_0\wedge D_1)).\end{align*}
Using the modularity of the lattice of equivalence relations on $X_2$, one sees that this is equal to
\[((D_0\vee(D_1\wedge D_2))\wedge D_2)\vee (D_0\wedge D_1) = (D_0\wedge D_2)\vee(D_1\wedge D_2)\vee (D_0\wedge D_1).\]
From this last expression, we get that $d_1(H_2(\X))=d_1(D_0\wedge D_2)=H_1(\X)$. This proves the existence of $\overline{d_1}$ such that $\overline{d_1}\eta_2=\eta_1d_1$. Let us also denote $\overline{s_0}$ the unique morphism $\frac{X_1}{H_1(\X)}\to \frac{X_1}{H_1(\X)}\times_{X_0}\frac{X_1}{H_1(\X)}$ such that $\overline{d_0}\overline{s_0}=1_{\frac{X_1}{H_1(\X)}}$ and $\overline{d_2}\overline{s_0}=\overline{s_0}\overline{d_1}$, and $\overline{s_1}$ the unique morphism $\frac{X_1}{H_1(\X)}\to \frac{X_1}{H_1(\X)}\times_{X_0}\frac{X_1}{H_1(\X)}$ such that $\overline{d_2}\overline{s_1}=1_{\frac{X_1}{H_1(\X)}}$ and $\overline{d_0}\overline{s_1}=\overline{s_0}\overline{d_0}$. Using the fact that $\eta_1$ and $\eta_2$ are regular epimorphisms, one can now easily prove that all the simplicial identities are satisfied. This endows the quotient graph with the structure of a multiplicative graph, which is then automatically a groupoid, which we denote $\Pi_1(\X)$. We also denote $\eta_\X\colon \X\to \Pi_1(\X)$ the morphism of simplicial objects induced by $1_{X_0}$, $\eta_1$ and $\eta_2$. We can show that $\eta_n$ is is a regular epimorphism for all $n$, by iterating the argument showing that $\eta_2$ is a regular epimorphism.

It remains to prove that $\Pi_1$ is indeed a left adjoint for the inclusion $\Grpd(\C)\to \Simp(\C)$. For this we must prove that for every morphism $f\colon \X\to \Y$ to a groupoid $\Y$, there exists a factorization of $f_n$ through $\eta_n\colon X_n\to \Pi_1(\X)_n$ for all $n$ (note that such a factorization is unique, as every $\eta_n$ is a regular epimorphism). The case $n=0$ is trivial as $\eta_0$ is the identity. For $n=1$, it is enough to prove that $Eq[f_1]\geq H_1(\X)$, or equivalently $f_1(H_1(\X))=\Delta_{Y_1}$. Now
\[f_1(H_1(\X))=f_1d^\X_1(D_0^\X \wedge D_2^\X )=d^\Y_1f_2(D_0^\X \wedge D_2^\X )\leq d^\Y_1(D^\Y_0\wedge D^\Y_2)=\Delta_{Y_1}.\]

This shows that the truncation of $f$ to a morphism $(f_1,f_0)$ of reflexive graph factors through the groupoid $X_1/H_1(\X)$, with a factorization $(g_1,g_0=f_0)$; applying the nerve functor allows us to extend this factorization to higher levels, resulting in morphisms $g_n\colon \Pi_1(\X)_n\to Y_n$. Then the factorizations $f_n=g_n\eta_n$, for $n\geq 2$, can be obtained from the universal property of the pullbacks defining each $\overline{X}_n$ and $Y_n$. Then since each $\eta_n$ is a regular epimorphism, the morphisms $g_n$ define a morphism of simplicial objects.
\end{proof}

Let us denote $H_n(\X)$ the kernel pair of $\eta_n$. We have proved already that $H_2(\X)=(D_0\wedge D_1)\vee (D_0\wedge D_2)\vee (D_1\wedge D_2)$. For the next section, it will be useful to prove a similar formula for $H_n(\X)$, for $n\geq 3$ :

\begin{prop} For all $n\geq 2$, we have
\[H_n(\X)=\bigvee_{0\leq i<j\leq n} (D_i\wedge D_j).\]
\end{prop}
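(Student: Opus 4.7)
My plan is to proceed by induction on $n$, with the base case $n=2$ already established in \autoref{thm:adjoint}. Fix the inductive hypothesis $H_{n-1}(\Y)=\bigvee_{0\leq i<j\leq n-1}(D_i^{\Y}\wedge D_j^{\Y})$ for every simplicial object $\Y$ in $\C$. Write $K_n(\X)=\bigvee_{0\leq i<j\leq n}(D_i\wedge D_j)$ for short.

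For the inclusion $K_n(\X)\leq H_n(\X)$, apply \autoref{prop:grpds_in_simp} to the groupoid $\Pi_1(\X)$: for any $0\leq i<j\leq n$ the pair $(\overline{d_i},\overline{d_j})$ is jointly monic at level $n$ of $\Pi_1(\X)$. Hence any $(x,y)\in D_i\wedge D_j$ satisfies $\overline{d_i}\eta_n(x)=\eta_{n-1}(d_ix)=\eta_{n-1}(d_iy)=\overline{d_i}\eta_n(y)$ and similarly for $\overline{d_j}$, forcing $\eta_n(x)=\eta_n(y)$, i.e.\ $(x,y)\in H_n$.

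For the reverse inclusion, I use the same joint monicity applied to the extremal pair $(d_0,d_n)$: by the characterisation of internal groupoids recalled above, the square
\[\begin{tikzcd}\Pi_1(\X)_n\ar[r,"\overline{d_0}"]\ar[d,"\overline{d_n}"']&\Pi_1(\X)_{n-1}\ar[d,"\overline{d_{n-1}}"]\\\Pi_1(\X)_{n-1}\ar[r,"\overline{d_0}"']&\Pi_1(\X)_{n-2}\end{tikzcd}\]
is a pullback, and therefore
\[H_n(\X)=d_0^{-1}(H_{n-1}(\X))\wedge d_n^{-1}(H_{n-1}(\X))=d_0^{-1}(K_{n-1}(\X))\wedge d_n^{-1}(K_{n-1}(\X))\]
by induction. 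Using the formulas of the preceding corollary for $d_k(D_i\wedge D_j)$, one then checks that $d_\ell(K_n(\X))=K_{n-1}(\X)$ for every $\ell\in\{0,\dots,n\}$: the constituents $D_i\wedge D_j$ with $\ell\notin\{i,j\}$ contribute precisely the terms of $K_{n-1}$, while those with $\ell\in\{i,j\}$ collapse to the diagonal. Since each $d_\ell$ is a (split) regular epimorphism and $\C$ is regular Mal'tsev, $d_\ell^{-1}(d_\ell(R))=R\vee D_\ell$, so $d_\ell^{-1}(K_{n-1})=K_n\vee D_\ell$, and thus
\[H_n(\X)=(K_n(\X)\vee D_0)\wedge(K_n(\X)\vee D_n).\]

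The final step is to apply the modular law on the lattice of equivalence relations on $X_n$ (available because $\C$ is Mal'tsev) to conclude that this meet equals $K_n(\X)$. Writing $L:=\bigvee_{0\leq i<j\leq n-1}(D_i\wedge D_j)$ and $M:=\bigvee_{1\leq i<j\leq n}(D_i\wedge D_j)$, one has $K_n\vee D_0=D_0\vee M$ and $K_n\vee D_n=D_n\vee L$, and the target identity $(D_0\vee M)\wedge(D_n\vee L)=L\vee M\vee(D_0\wedge D_n)$ generalises the two-step modular computation carried out for $n=2$ in the proof of \autoref{thm:adjoint}. The main obstacle is this last modular reduction: for $n\geq 3$ the relations $L$ and $M$ are no longer single meets but joins of several terms, so the rewrites are more delicate. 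One useful additional flexibility is that the same argument shows $H_n=(K_n\vee D_i)\wedge(K_n\vee D_j)$ for \emph{any} pair $i<j$ (since any pair of faces is jointly monic on $\Pi_1(\X)_n$ by \autoref{prop:grpds_in_simp}), so that $H_n=\bigwedge_{\ell=0}^{n}(K_n\vee D_\ell)$, which can be interpreted as the groupoid condition for the putative quotient $\X/K$ and so provides an alternative route via checking that the face maps descend compatibly and using universality of $\Pi_1$.
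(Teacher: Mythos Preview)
Your setup matches the paper's proof exactly: induction on $n$, the easy inclusion via joint monicity of the faces on $\Pi_1(\X)$, and the reduction of $H_n$ to $(K_n\vee D_0)\wedge(K_n\vee D_n)$ via $d_\ell^{-1}(K_{n-1})=K_n\vee D_\ell$. You also correctly identify that the remaining work is the modular-lattice identity $(D_0\vee M)\wedge(D_n\vee L)=K_n$, and you are honest that you have not carried it out.

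This is precisely the nontrivial content of the proposition, and the paper does not get it from a one-line modular rewrite. After one application of modularity (using $M\leq D_n\vee L$) the paper reduces to showing
\[
D_0\wedge\Bigl(D_m\vee\bigvee_{0\leq i<j<m}(D_i\wedge D_j)\Bigr)=\bigvee_{0<j\leq m}(D_0\wedge D_j)
\]
for all $m\geq 1$, and proves this by a \emph{second} induction on $m$: one applies $d_m$ to the left-hand side, uses the corollary on $d_k(D_i\wedge D_j)$ together with the inductive hypothesis to bound the image, pulls back along $d_m$ to get the left-hand side $\leq D_m\vee\bigvee_{0<j\leq m+1}(D_0\wedge D_j)$, and then applies modularity once more using $\bigvee_{0<j\leq m+1}(D_0\wedge D_j)\leq D_0$. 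So the ``more delicate rewrites'' are a genuine inner induction, not a bookkeeping exercise; your proposal stops just before this.

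Your alternative route is circular as written. Saying that $H_n=\bigwedge_\ell(K_n\vee D_\ell)$ ``can be interpreted as the groupoid condition for $\X/K$'' is exactly the statement you want: under the correspondence between equivalence relations on $X_n/K_n$ and equivalence relations on $X_n$ containing $K_n$, the kernel pair of the descended face $\overline{d_\ell}$ corresponds to $K_n\vee D_\ell$, so the groupoid condition $\overline{D_i}\wedge\overline{D_j}=\Delta$ on the quotient \emph{is} the identity $(K_n\vee D_i)\wedge(K_n\vee D_j)=K_n$. Invoking the universal property of $\Pi_1$ would then give $H_n\leq K_n$, but only after you have already proved what you need. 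If you want to salvage this route, you must produce an independent reason why $\X/K$ is a groupoid (e.g.\ by exhibiting it concretely as a nerve), and that does not seem easier than the lattice computation.
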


\begin{proof}
We prove the result by induction on $n$. The case $n=2$ was done in the proof of \autoref{thm:adjoint}. Now let us assume that it holds for $n$; since by construction the square
\[\begin{tikzcd} \Pi_1(\X)_{n+1} \pb \ar[r,"d_0"] \ar[d,"d_{n+1}"']& \Pi_1(\X)_n \ar[d,"d_n"] \\ \Pi_1(\X)_n \ar[r,"d_0"'] & \Pi_1(\X)_{n-1}\end{tikzcd}\]
is a pullback, so that the two morphisms $d_0,d_{n+1}$ are jointly monic, we have for $n+1$
\begin{align*}H_{n+1}(\X) & =Eq[\eta_{n+1}]=Eq[d_{0}\eta_{n+1}]\wedge Eq[d_{n+1}\eta_{n+1}] \\ & = Eq[\eta_n d_0]\wedge Eq[\eta_{n}d_{n+1}] = d_0^{-1}(H_n(\X))\wedge d_{n+1}^{-1}(H_n(\X))\end{align*}
Moreover, by the induction hypothesis we have the identities
\[H_n(\X)=\smashoperator[r]{\bigvee_{0\leq i<j\leq n}} (D_i\wedge D_j)=  d_0\left(\smashoperator[r]{\bigvee_{0<i<j\leq n+1}} (D_i\wedge D_j)\right)\]
and
\[H_n(\X)=\smashoperator[r]{\bigvee_{0\leq i<j\leq n}} (D_i\wedge D_j)=  d_{n+1}\left(\smashoperator[r]{\bigvee_{0\leq i<j < n+1}} (D_i\wedge D_j)\right).\]
Combining all these, we get the identity
\[H_{n+1}(\X)=\left(D_0\vee \smashoperator{\bigvee_{0<i<j\leq n+1}} (D_i\wedge D_j)\right) \wedge \left( D_{n+1}\vee \smashoperator{\bigvee_{0\leq i<j < n+1}} (D_i\wedge D_j)\right).\]
From there we already see that
\[H_{n+1}(\X)\geq \smashoperator[r]{\bigvee_{0\leq i<j\leq n+1}}(D_i\wedge D_j).\]

For the converse inequality, first note that
\[\smashoperator[r]{\bigvee_{0<i<j\leq n+1}} (D_i\wedge D_j)\leq \left(D_{n+1}\vee \left(\smashoperator[r]{\bigvee_{0\leq i<j< n+1}} (D_i\wedge D_j)\right)\right),\]
and thus, since the lattice of equivalence relations of $X_{n+1}$ is modular, we have
\[H_{n+1}(\X)= \smashoperator[r]{\bigvee_{0<i<j\leq n+1}} (D_i\wedge D_j) \vee \left(D_0\wedge \left( D_{n+1}\vee \smashoperator{\bigvee_{0 \leq i<j< n+1}} (D_i\wedge D_j)\right) \right).\]
Now to conclude the proof it is enough to prove that
\begin{equation}D_0\wedge \left( D_{m}\vee \smashoperator{\bigvee_{0\leq i<j< m}} (D_i\wedge D_j)\right) = \smashoperator[r]{\bigvee_{0< j \leq m}}(D_0\wedge D_j)\label{eq:step}\end{equation}
for all $m\geq 1$, which we will do by induction. The case where $m=1$ is trivial, so let us now assume that \eqref{eq:step} holds for some $m$. Then we have
\begin{align*}
d_{m} & \left( D_0 \wedge \left(D_{m+1}\vee \smashoperator{\bigvee_{0\leq i<j< m+1}} (D_i\wedge D_j)\right)\right)
\\ & \leq d_{m}(D_0)\wedge \left( d_m(D_{m+1})\vee\smashoperator{\bigvee_{0\leq i<j< m+1}} d_m(D_i\wedge D_j)\right)
\\ & = D_0\wedge \left( D_{m}\vee \smashoperator{\bigvee_{0\leq i<j< m}}(D_i\wedge D_j)\right)
\\ & = \smashoperator[r]{\bigvee_{0 < j\leq m}} (D_0\wedge D_j)
\\ & = d_m\left(\smashoperator[r]{\bigvee_{0< j\leq m+1}}(D_0\wedge D_j)\right),
\end{align*}
and as a consequence we have
\[D_0 \wedge \left(D_{m+1}\vee \smashoperator{\bigvee_{0\leq i<j< m+1}} (D_i\wedge D_j)\right)\leq D_m\vee \smashoperator{\bigvee_{0< j\leq m+1}}(D_0\wedge D_j).\]
It follows that the left-hand side must be equal to
\[D_0\wedge \left( D_{m+1}\vee \smashoperator{\bigvee_{0\leq i<j< m+1}} (D_i\wedge D_j)\right) \wedge \left(D_{m}\vee \smashoperator{\bigvee_{0< j\leq m+1}}(D_0\wedge D_j)\right).\]
Now since $ \bigvee_{0< j\leq m+1}(D_0\wedge D_j)\leq D_0$, using again the modularity law, we find that
\[D_0 \wedge \left(D_m\vee \smashoperator{\bigvee_{0< j\leq m+1}}(D_0\wedge D_j)\right) =\smashoperator[r]{\bigvee_{0< j\leq m+1}}(D_0\wedge D_j),\]
and this is smaller than $\left( D_{m+1}\vee \bigvee_{0\leq i<j< m+1} (D_i\wedge D_j)\right)$, which concludes the proof.
\end{proof}

\begin{remark}If the category $\mathcal{C}$ is not only exact Mal'tsev but also arithmetical (\cite{P96}), then the category $\Grpd(\C)$ coincides with the category of equivalence relations, which is thus a Birkhoff subcategory of $\Simp(\C)$. Note that in that case, $H_1(\X)=d_0(D_1\wedge D_2)=D_0\wedge D_1$, since direct images preserve intersections of equivalence relations (by Theorem 5.2 of \cite{B05}). Accordingly our reflection becomes a reflection of $\Simp(\C)$ into $\Eq(\C)$.

Since every groupoid is a quotient of an equivalence relation, $\Eq(\C)$ is closed under quotients in $\Simp(\C)$ if and only if $\Eq(\C)=\Grpd(\C)$.
\end{remark}

\begin{coro}An exact Mal'tsev category is arithmetical if and only if $\Eq(\C)$ is a Birkhoff subcategory of $\Simp(\C)$.\end{coro}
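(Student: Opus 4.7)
The plan is to make precise the argument sketched in the preceding remark, which essentially contains both directions of the equivalence.

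For the direct implication, assume $\C$ is arithmetical. By the Pedicchio characterization recalled in the remark (cf.\ \cite{P96}), every internal groupoid in $\C$ is then an equivalence relation, so $\Grpd(\C) = \Eq(\C)$ as full subcategories of $\Simp(\C)$; Theorem~\ref{thm:adjoint} immediately gives that $\Eq(\C)$ is a Birkhoff subcategory of $\Simp(\C)$.

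For the converse, assume $\Eq(\C)$ is Birkhoff in $\Simp(\C)$; in particular it is closed under regular quotients. Since $\Eq(\C) \subseteq \Grpd(\C)$ is automatic, it suffices to prove the reverse inclusion. As pointed out in the remark, this in turn reduces to the auxiliary claim that every internal groupoid in $\C$ is a regular-epi quotient, in $\Simp(\C)$, of some equivalence relation: granting this, closure of $\Eq(\C)$ under quotients forces every groupoid to lie in $\Eq(\C)$, so $\Grpd(\C) = \Eq(\C)$ and $\C$ is arithmetical.

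To verify this auxiliary claim, given a groupoid $G = (X_1 \rightrightarrows X_0)$ I would take $R$ to be the kernel pair $D_1$ of the source morphism $d_1 \colon X_1 \to X_0$. Viewed as a simplicial object via its nerve, $R$ has $R_0 = X_1$, $R_1 = X_1 \times_{X_0} X_1$, and higher degrees given by iterated fibre products, and it is manifestly an equivalence relation. A simplicial morphism $\pi \colon R \to G$ can then be defined in degree zero by the target map $d_0 \colon X_1 \to X_0$, and in degree one by sending $(\alpha, \beta)$ to the composite $\alpha \cdot \beta^{-1}$ in the groupoid $G$ (well-defined precisely because $\alpha$ and $\beta$ share a source); since both $R$ and $G$ are nerves of internal groupoids, the fact that this data defines an internal functor $R \to G$ automatically extends it to a simplicial morphism in all degrees. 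Each component $\pi_n$ admits an explicit section built from identity arrows---in particular $\pi_0 = d_0$ has section $s_0$, and $\pi_1$ the section $\gamma \mapsto (\gamma, s_0 d_1(\gamma))$---so $\pi$ is degreewise split epi, and in particular a regular epimorphism in $\Simp(\C)$. I expect no substantial obstacle here; the entire argument is simply the combination of Theorem~\ref{thm:adjoint}, the preceding remark, and this short explicit construction.
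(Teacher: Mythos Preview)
Your argument is correct and follows the same route as the paper's remark: the forward direction via $\Grpd(\C)=\Eq(\C)$ and Theorem~\ref{thm:adjoint}, and the converse via closure under quotients together with the fact that every groupoid is a regular quotient of an equivalence relation. Your explicit construction of $R$ as the kernel pair of $d_1$ with $\pi_1(\alpha,\beta)=\alpha\beta^{-1}$ is essentially the d\'ecalage $\epsilon_G\colon Dec(G)\to G$ (up to the choice of $d_0$ versus $d_1$), which is what the paper has in mind when it asserts without further detail that ``every groupoid is a quotient of an equivalence relation''; your observation that degreewise split epimorphisms are regular in $\Simp(\C)$ is the right way to conclude.
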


\begin{remark}Note that, in contrast with the Smith-Pedicchio commutator, which yields a left adjoint of the forgetful/inclusion functor $\Grpd(\C)\to \RG(\C)$ (\cite{P95}), we don't need to assume the existence of any colimits to define $H_1(\X)$.
\end{remark}

\section{Characterization of central extensions}

Being a Birkhoff subcategory of the exact Mal'tsev category $\Simp(\C)$, $\Grpd(\C)$ is admissible in the sense of categorical Galois theory, when $\F$ is the class of all regular epimorphisms. In this section we will characterize the central extensions with respect to this reflection.

\begin{conv} If $f\colon \X\to \Y$ is a morphism in $\Simp(\C)$, we will denote $F_n$ the kernel pair of the corresponding morphism $f_n\colon X_n\to Y_n$, for all $n\geq 0$. Similarly, for morphisms $g\colon \Z\to \W$ and $f'\colon \X'\to \Y'$ in $\Simp(\C)$, we will denote the corresponding kernel pairs $G_n$ and $F'_n$ (for $n\geq 0$), respectively.
\end{conv}

First, we note that Proposition 4.2 of \cite{JK94} implies, in our case, that trivial extensions $f\colon \X\to \Y$ are characterized by the property that $F_n\wedge H_n(\X)=\Delta_{X_n}$ for all $n\geq 0$, that is :

\[F_n\wedge \left(\smashoperator[r]{\bigvee_{0\leq i<j\leq n}} D_i\wedge D_j\right)=\Delta_{X_n}\]
for $n\geq 2$ and
\[F_1\wedge d_0(D_1\wedge D_2)=\Delta_{X_1}.\]

Our characterization of central extensions is then obtained simply by "distributing" the intersection with $F_n$ appearing in these equations with the join or image. In other words we have

\begin{thm}\label{thm:char_central_exts}
A regular epimorphism $f\colon \X \to \Y$ is a central extension with respect to the Galois structure induced by the reflection of $\Simp(\C)$ into $\Grpd(\C)$ if and only if
\begin{equation}\label{eq:central_1}d_1(F_1\wedge D_0\wedge D_2) =\Delta_{X_1}\end{equation}
and, for all $n\geq 2$ and $i,j$ such that $0\leq i<j \leq n$,
\begin{equation}\label{eq:central_2} F_n\wedge D_i\wedge D_j =\Delta_{X_n}.\end{equation}
\end{thm}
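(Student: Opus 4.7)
The strategy is to apply the trivial extension criterion recalled just above the statement to the first kernel pair projection $\pi_1\colon \mathrm{Eq}[f]\to \X$. Since $\Simp(\C)$ is exact Mal'tsev and $\Grpd(\C)$ is a Birkhoff subcategory (\autoref{thm:adjoint}), every central extension is normal, so $f$ is central if and only if $\pi_1$ is trivial. Writing $P_n$ for the kernel pair of $\pi_1^n\colon F_n\to X_n$, the criterion reduces the theorem to showing that the equations $P_n\wedge H_n(\mathrm{Eq}[f])=\Delta_{F_n}$ on $F_n$ are equivalent to the stated conditions on $X_n$.

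For $n\geq 2$, I would expand $H_n(\mathrm{Eq}[f])=\bigvee_{0\leq i<j\leq n}(D_i^F\wedge D_j^F)$ using the formula proved in the previous section. The key observation is that, viewing $F_n\subseteq X_n\times X_n$ and $P_n$ as the ``same first coordinate'' equivalence relation, a pair $((a,b),(a,c))\in P_n\wedge (D_i^F\wedge D_j^F)$ corresponds bijectively to a pair $(b,c)\in F_n\wedge D_i^X\wedge D_j^X$ (with $a$ playing the role of an arbitrary base point mapping to the same $f$-value); this gives, for each $i<j$,
\[P_n\wedge (D_i^F\wedge D_j^F)=\Delta_{F_n}\iff F_n\wedge D_i^X\wedge D_j^X=\Delta_{X_n}.\]
The forward direction (trivial $\Rightarrow$ each condition) is then immediate from $D_i^F\wedge D_j^F\leq H_n(\mathrm{Eq}[f])$. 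For the converse, assuming each $F_n\wedge D_i^X\wedge D_j^X=\Delta_{X_n}$, one has to pass from individual summands to the full join; using the Mal'tsev identity that the join of equivalence relations is their composition, I would unravel an element of $P_n\wedge\bigvee_{i<j}(D_i^F\wedge D_j^F)$ as a zigzag of $D_i^F\wedge D_j^F$-steps, then exploit modularity of the equivalence relation lattice of $F_n$, together with the specific structure of $P_n$ as the kernel pair of a projection out of a Mal'tsev kernel pair, to reduce triviality of the meet with the whole join to triviality of each summand-wise meet.

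For $n=1$, the identity $H_1(\mathrm{Eq}[f])=d_1^F(D_0^F\wedge D_2^F)$ combined with the triple extension property of the cube induced by $\pi_1$ (guaranteed by \autoref{lem:reg_po_simp}, since $\pi_1$ is a regular epimorphism) makes direct image along $d_1$ compatible with the relevant meets, so that $P_1\wedge d_1^F(D_0^F\wedge D_2^F)=\Delta_{F_1}$ unwinds to $d_1^X(F_1\wedge D_0^X\wedge D_2^X)=\Delta_{X_1}$.

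The main obstacle is the converse direction at level $n\geq 2$: the lattice of equivalence relations in a Mal'tsev category is only modular, not distributive, so the meet with $P_n$ does not distribute over the join $\bigvee_{i<j}(D_i^F\wedge D_j^F)$ on formal grounds. The technical heart of the argument is thus a careful bookkeeping of first and second coordinates along a Mal'tsev composite of $D_i^F\wedge D_j^F$'s in order to conclude that the central conditions on $X_n$ force the global meet with $P_n$ on $F_n$ to be trivial.
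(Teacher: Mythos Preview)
Your overall strategy coincides with the paper's: reduce centrality of $f$ to triviality of the kernel pair projection $\pi_1\colon \X\times_\Y\X\to\X$, and then unwind the conditions $Eq[(\pi_1)_n]\wedge H_n(\X\times_\Y\X)=\Delta$. Your reduction of each individual summand $P_n\wedge(D_i'\wedge D_j')=\Delta$ to $F_n\wedge D_i\wedge D_j=\Delta$ is also essentially the content of the paper's pullback-stability lemma (\autoref{thm:pb_stab}), which is proved by a cube argument rather than a coordinate bijection but says the same thing.

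The genuine gap is in what you yourself call the ``main obstacle'': passing from triviality of each $P_n\wedge(D_i'\wedge D_j')$ to triviality of $P_n\wedge\bigvee_{i<j}(D_i'\wedge D_j')$. Your proposed zigzag/bookkeeping argument is not really an argument---modularity alone does not give this, and unraveling a Mal'tsev composite of arbitrary length ``by careful bookkeeping of first and second coordinates'' is a promise, not a proof. The paper sidesteps this entirely with a structural observation you do not make: $\pi_1$ is a \emph{split} epimorphism in $\Simp(\C)$ (split by the diagonal $\X\to\X\times_\Y\X$), and each $D_i'\wedge D_j'$ is compatible with this splitting in the sense that the quotients fit into a commuting square of split epimorphisms. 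This places us in the hypotheses of a general lemma (\autoref{thm:split}): for a split epimorphism $f$ and equivalence relations $A,B$ fitting into such a diagram, one has $Eq[f]\wedge(A\vee B)=\Delta$ if and only if $Eq[f]\wedge A=\Delta=Eq[f]\wedge B$. The proof of that lemma is a clean triple-extension argument---the splitting turns the relevant cube into a split epimorphism between double extensions, hence a triple extension, and pullback-pasting finishes it. No zigzags are needed. Iterating over the finitely many summands then gives the required triviality of the full join. So the missing idea in your proposal is precisely: exploit that $\pi_1$ is split, and prove the ``distributivity'' you need as a general fact about split epimorphisms via triple extensions, rather than trying to argue elementwise inside a merely modular lattice.
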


To prove this we will need a couple of lemmas.

\begin{lemma}\label{thm:pb_stab}
Let
\[\begin{tikzcd} \bP \pb \ar[r,"g'"] \ar[d,"f'"'] & \X \ar[d,"f"]\\ \Z \ar[r,"g"'] & \Y \end{tikzcd}\]
be a pullback square of regular epimorphisms in $\Simp(\C)$, and let $n\geq 2$ and $0\leq i<j\leq n$. Let us denote $d_i'$ the face morphisms of the simplicial object $\bP$, and $D_i'$ their kernel pairs. Then
\[F_n\wedge D_i\wedge D_j =\Delta_{X_n} \Leftrightarrow F_n'\wedge D_i'\wedge D_j' =\Delta_{P_n}.\]
\end{lemma}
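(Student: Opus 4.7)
The plan is to exploit the levelwise structure of the pullback and reduce the statement to a computation of images under $g'_n \times g'_n$, using Frobenius reciprocity. Note that $g'_n$ is a regular epimorphism, being the pullback of $g_n$ in the regular category $\Simp(\C)$.

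First, since pullbacks in $\Simp(\C)$ are computed pointwise, $P_n = X_n \times_{Y_n} Z_n$ in $\C$, and the face $d'_i\colon P_n \to P_{n-1}$ is the pair $\langle d_i^\X g'_n, d_i^\Z f'_n\rangle$. Hence
\[D'_i = (g'_n \times g'_n)^{-1}(D_i) \wedge (f'_n \times f'_n)^{-1}(D_i^\Z),\]
and similarly for $D'_j$. Since $F'_n = Eq[f'_n]$ is automatically contained in both $(f'_n \times f'_n)^{-1}(D_i^\Z)$ and $(f'_n \times f'_n)^{-1}(D_j^\Z)$, the $\Z$-factors cancel once we intersect with $F'_n$:
\[F'_n \wedge D'_i \wedge D'_j \;=\; F'_n \wedge (g'_n \times g'_n)^{-1}(D_i \wedge D_j).\]
Applying $g'_n \times g'_n$ and invoking Frobenius reciprocity $h(h^{-1}(R) \wedge S) = R \wedge h(S)$ for the regular epimorphism $h = g'_n \times g'_n$, together with $(g'_n \times g'_n)(F'_n) = F_n$ (which holds because kernel pairs are stable under pullback, so the induced map $F'_n \to F_n$ is itself a regular epimorphism), one obtains the key identity
\[(g'_n \times g'_n)(F'_n \wedge D'_i \wedge D'_j) \;=\; F_n \wedge D_i \wedge D_j.\]

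The implication $(\Leftarrow)$ is then immediate: the image of $\Delta_{P_n}$ under $g'_n \times g'_n$ is $\Delta_{X_n}$. For the converse $(\Rightarrow)$, the defining monomorphism $\langle g'_n, f'_n\rangle\colon P_n \hookrightarrow X_n \times Z_n$ implies $Eq[g'_n] \wedge F'_n = \Delta_{P_n}$. Assuming $F_n \wedge D_i \wedge D_j = \Delta_{X_n}$, the identity above (combined with $F'_n \leq (g'_n \times g'_n)^{-1}(F_n)$) forces $F'_n \wedge D'_i \wedge D'_j \leq (g'_n \times g'_n)^{-1}(\Delta_{X_n}) = Eq[g'_n]$; since this intersection also sits inside $F'_n$, we conclude $F'_n \wedge D'_i \wedge D'_j \leq F'_n \wedge Eq[g'_n] = \Delta_{P_n}$.

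The main conceptual step is noticing that intersecting with $F'_n$ absorbs the $f'_n$-pullback parts of $D'_i$ and $D'_j$, reducing a question that naively involves two independent sources of relations on $P_n$ to a single-source one that Frobenius handles cleanly; the rest is routine manipulation of subobjects in a regular category, plus the observation that the pullback projection $\langle g'_n, f'_n\rangle$ is monic.
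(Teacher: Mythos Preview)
Your proof is correct and reaches the same key identity $(g'_n\times g'_n)(F'_n\wedge D'_i\wedge D'_j)=F_n\wedge D_i\wedge D_j$ as the paper, after which the two directions are handled identically (image of $\Delta$ for $(\Leftarrow)$, joint monicity of $g'_n,f'_n$ for $(\Rightarrow)$). The route to that identity, however, is genuinely different. The paper builds the cube relating the levelwise pullback square to the comparison maps $\langle d_i,d_j\rangle$ and observes that, since the top and bottom faces are pullbacks, the induced square
\[
\begin{tikzcd}
P_n \ar[d,"g'_n"'] \ar[r] & (P_{n-1}\times_{P_{n-2}}P_{n-1})\times_{Z_{n-1}\times_{Z_{n-2}}Z_{n-1}}Z_n \ar[d] \\
X_n \ar[r] & (X_{n-1}\times_{X_{n-2}}X_{n-1})\times_{Y_{n-1}\times_{Y_{n-2}}Y_{n-1}}Y_n
\end{tikzcd}
\]
is itself a pullback; the identity then drops out because the horizontal kernel pairs are exactly $F'_n\wedge D'_i\wedge D'_j$ and $F_n\wedge D_i\wedge D_j$. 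You instead work purely in the lattice of equivalence relations: writing $D'_i=(g'_n)^{-1}(D_i)\wedge(f'_n)^{-1}(D_i^{\Z})$ and absorbing the $(f'_n)^{-1}$-factors into $F'_n$ reduces the left-hand side to $F'_n\wedge (g'_n)^{-1}(D_i\wedge D_j)$, and Frobenius reciprocity finishes the computation. Your argument is shorter and avoids the auxiliary diagram; the paper's argument is more geometric and makes the pullback structure explicit, which fits the surrounding narrative of double and triple extensions. The parenthetical ``combined with $F'_n\leq (g'_n\times g'_n)^{-1}(F_n)$'' in your $(\Rightarrow)$ step is redundant---the image identity alone already gives $F'_n\wedge D'_i\wedge D'_j\leq Eq[g'_n]$ via the adjunction $\exists_{g'_n\times g'_n}\dashv (g'_n\times g'_n)^{-1}$---but it does no harm.
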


\begin{proof}Since pullbacks in $\Simp(\C)$ are computed "levelwise" in $\C$, for all $n$ the square
\[\begin{tikzcd} P_n \pb\ar[r,"g_n'"] \ar[d,"f_n'"'] & X_n \ar[d,"f_n"]\\ Z_n \ar[r,"g_n"'] & Y_n \end{tikzcd}\]
is a pullback. Therefore, in the cube
\[\begin{tikzcd}[column sep={5.4em,between origins},row sep={3em,between origins}]
P_n\ar[dr,"f_n'"']\ar[dd,"{( d_i',d_j')}"']\ar[rr,"g_n'"] & & X_n \ar[dr,"f_n"]\ar[dd,"{( d_i,d_j)}", near end] & \\
& Z_n \ar[rr,"g_n",crossing over,near start]& & F\ar[dd]\\
P_{n-1}\times_{P_{n-2}} P_{n-1} \ar[rr] \ar[dr] & & X_{n-1}\times_{X_{n-2}} X_{n-1} \ar[dr,] & \\
& Z_{n-1}\times_{Z_{n-2}} Z_{n-1}\ar[from=uu,crossing over]\ar[rr] & & Y_{n-1}\times_{Y_{n-2}} Y_{n-1}\end{tikzcd}\]
the top and bottom faces are pullbacks; one can then show that the square
\[\begin{tikzcd} P_n \ar[d,"g_n'"'] \ar[r]& (P_{n-1}\times_{P_{n-2}} P_{n-1})\times_{Z_{n-1}\times_{Z_{n-2}} Z_{n-1}} Z_n \ar[d] \\ X_n\ar[r]& (X_{n-1}\times_{X_{n-2}} X_{n-1})\times_{Y_{n-1}\times_{Y_{n-2}} Y_{n-1}} Y_n \end{tikzcd}\]
is a pullback, which implies that
\begin{equation} g_n'(F_n'\wedge D_i'\wedge D_j')=F_n\wedge D_i\wedge D_j.\label{eq:img_inter}\end{equation}

In particular, $F_n'\wedge D_i'\wedge D_j'=\Delta_{P_n}$ implies that $F_n\wedge D_i\wedge D_j=\Delta_{X_n}$.

For the converse, the equation \eqref{eq:img_inter} shows already that if $F_n\wedge D_i\wedge D_j=\Delta_{X_n}$, then $F_n'\wedge D_i'\wedge D_j'\leq G_n'$. Since it is also smaller than $F_n'$, and since $f_n'$ and $g_n'$ are jointly monic by construction, we have $F_n'\wedge D_i'\wedge D_j'=\Delta_{P_n}$.
\end{proof}

\begin{lemma}\label{thm:split}
Let $f\colon X\to Y$ be a split epimorphism, with section $s\colon Y\to X$, and let $A,B$ be two equivalence relations on $X$, with respective coequalizers $q_A,q_B$. Assume that we have a diagram
\begin{equation}\label{eq:split_epi}\begin{tikzcd} X/A \ar[d,shift left]& X \ar[l,"q_A"'] \ar[d,shift left,"f"] \ar[r,"q_B"] & X/B \ar[d, shift left] \\ Y/A' \ar[u, shift left] & Y \ar[r,"q_{B'}"'] \ar[l,"q_{A'}"] \ar[u, shift left,"s"]& Y/B' \ar[u, shift left]\end{tikzcd}\end{equation}
where the vertical downwward arrows are split epimorphisms, and the upward and downward squares commute. Then the following conditions are equivalent :
\begin{enumerate}\item $Eq[f]\wedge (A\vee B)=\Delta_{X}$
\item $Eq[f]\wedge A=\Delta_X = Eq[f]\wedge B$.
\end{enumerate}
\end{lemma}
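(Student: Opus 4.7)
The forward implication $(1) \Rightarrow (2)$ is immediate: $A \leq A \vee B$ gives $Eq[f] \wedge A \leq Eq[f] \wedge (A \vee B) = \Delta_X$, and likewise for $B$.

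For the converse, the plan exploits that $\C$ is a regular Mal'tsev category, so $A \vee B = A \circ B$ as equivalence relations on $X$. Passing to a generalized element $(x_0, x_2)$ of $Eq[f] \wedge (A \vee B)$, one finds an intermediate $x_1$ with $x_0 A x_1$, $x_1 B x_2$, and $f(x_0) = f(x_2)$; the goal is to force $x_0 = x_2$. First, the commuting downward squares give $f(x_0) A' f(x_1)$ and $f(x_1) B' f(x_2)$, and then the commuting upward squares yield $sf(x_0) A sf(x_1)$ and $sf(x_1) B sf(x_2)$. Since $f(x_0) = f(x_2)$, one has $sf(x_0) = sf(x_2)$.

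The key move is to set
\[\tilde{x}_1 = p(x_1, sf(x_1), sf(x_0))\]
using a Mal'tsev ternary operation $p$ on generalized elements, legitimate in any exact Mal'tsev category (e.g.\ via Yoneda into a Mal'tsev variety). Using $p(a,b,b)=a$, $p(a,a,b)=b$, and the preservation of equivalence relations by $p$: the triples $(x_0, sf(x_0), sf(x_0))$ and $(x_1, sf(x_1), sf(x_0))$ are componentwise $A$-related, so $x_0 = p(x_0, sf(x_0), sf(x_0)) \mathrel{A} \tilde{x}_1$; the triples $(x_1, sf(x_1), sf(x_0))$ and $(x_2, sf(x_0), sf(x_0))$ are componentwise $B$-related (using $sf(x_1) B sf(x_2) = sf(x_0)$), so $\tilde{x}_1 \mathrel{B} p(x_2, sf(x_0), sf(x_0)) = x_2$; and applying $f$ gives $f(\tilde{x}_1) = p(f(x_1), f(x_1), f(x_0)) = f(x_0)$. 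Hence $(x_0, \tilde{x}_1) \in Eq[f] \wedge A = \Delta_X$ yields $x_0 = \tilde{x}_1$, and $(\tilde{x}_1, x_2) \in Eq[f] \wedge B = \Delta_X$ yields $\tilde{x}_1 = x_2$, so that $x_0 = x_2$ as desired.

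The main obstacle is justifying the use of the Mal'tsev operation in a non-varietal exact Mal'tsev category; this should be handled either by the standard generalized-element (Yoneda) reduction to the varietal case, or by a direct categorical reformulation of the $p$-step as a claim about suitable pullbacks of spans. Once this is in place, the calculation above is essentially bookkeeping.
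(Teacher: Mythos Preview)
Your element-chasing argument is correct in a Mal'tsev \emph{variety}: the computation with $\tilde{x}_1 = p(x_1, sf(x_1), sf(x_0))$ is clean, and the verification that $(x_0,\tilde{x}_1)\in Eq[f]\wedge A$ and $(\tilde{x}_1,x_2)\in Eq[f]\wedge B$ goes through exactly as you write. But the step you flag as ``the main obstacle'' is a genuine gap, not bookkeeping. A general exact Mal'tsev category need not carry a natural Mal'tsev operation (those that do are the \emph{naturally Mal'tsev} categories of Johnstone, a strictly smaller class), and the Yoneda embedding lands in a presheaf topos, which is Mal'tsev only when trivial --- so that suggested route does not work as stated. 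There \emph{is} an embedding theorem (Jacqmin) sending a small regular Mal'tsev category into a product of essentially algebraic Mal'tsev categories via a conservative regular functor, and invoking it would rescue your argument; but that is a substantial external result you would have to cite and verify is applicable here. Your alternative suggestion of a direct categorical reformulation is also viable but is not carried out.

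The paper takes a genuinely different route and stays self-contained: it completes the span diagram to a commutative cube by taking pushouts, observes that the cube is a split epimorphism between double extensions and hence a triple extension (via the machinery of \autoref{thm:reg_PO_split}), and then reads the two hypotheses $Eq[f]\wedge A=\Delta_X$ and $Eq[f]\wedge B=\Delta_X$ as saying that certain faces of the cube are pullbacks, forcing the composite square expressing $Eq[f]\wedge(A\vee B)=\Delta_X$ to be a pullback as well. No Mal'tsev term and no external embedding is needed; the proof lives entirely in the double/triple-extension calculus already developed in the paper. Your approach, once the embedding is supplied, is more transparent at the element level; the paper's is more uniform with the surrounding material and avoids importing a non-trivial metatheorem.
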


\begin{proof}
	First of all, we have the inequality
	\[(Eq[f]\wedge A)\vee (Eq[f]\wedge B) \leq Eq[f]\wedge (A\vee B),\]
	which immediately proves that the first condition implies the second.
	
	For the converse, we can complete the diagram \eqref{eq:split_epi} by taking the pushouts of the top and bottom spans. This yields a cube
	\[\begin{tikzcd}[column sep={4em,between origins},row sep={3em,between origins}]
	X\ar[dr,"f",shift left]\ar[dd,"q_A"']\ar[rr,"q_B"] & & X/B \ar[dr,shift left]\ar[dd] & \\
	& Y \ar[rr,"q_{B'}" near start,crossing over] \ar[ul,"s",shift left]& & Y/B'\ar[dd]\ar[ul, shift left]\\
	X/A \ar[rr] \ar[dr,shift left] & & X/(A\vee B) \ar[dr, shift left] & \\
	& Y/A' \ar[from=uu,crossing over,"q_{A'}"',near start]\ar[rr] \ar[ul,shift left] & & Y/(A'\vee B')\ar[ul,shift left]\end{tikzcd}\]
	which is a split epimorphism between double extensions, hence a triple extension. In particular, the square
	\[\begin{tikzcd} X \ar[r,"q_B"] \ar[d,"{\langle q_A,f \rangle}"] & X/B \ar[d,"{\gamma}"] \\ {X/A\times_{Y/A'} Y} \ar[r] & {X/(A\vee B) \times_{Y/(A'\vee B')} Y/B'}\end{tikzcd}\]
	is a double extension. Assume now that $Eq[f]\wedge A=\Delta_X = Eq[f]\wedge B$. The first equality implies that $\langle q_A,f\rangle$ is a monomorphism, hence an isomorphism; then so is $\gamma$ in the diagram above, and thus the left and right faces of the cube are pullbacks. Similarly, the second equality implies that the top face is a pullback as well, and then so is the square
	\[\begin{tikzcd} X \ar[d,"f"'] \ar[r,"q_{A\vee B}"] & X/(A\vee B) \ar[d] \\ Y\ar[r] & Y/(A'\vee B'), \end{tikzcd}\]
	since it is the composite of the top and right faces. This implies that $Eq[f]\wedge (A\vee B)=\Delta_{X}$.
\end{proof}

\begin{proof}[Proof of \autoref{thm:char_central_exts}]
	Let us consider the diagram
	\[\begin{tikzcd}\Pi_1(\X\times_\Y\X)\ar[d,"\Pi_1(\pi_1)"'] & \ar[l,"\eta_{\X\times_\Y\X}"'] \X\times_\Y\X \ar[d,"\pi_1"]\ar[r,"\pi_2"] & \X \ar[d,"f"] \\ \Pi_1(\X) & \ar[l,"\eta_\X"] \X \ar[r,"f"'] & \Y. \end{tikzcd}\]
	Now assume first that $f$ is a central extension, so that the left-hand square is a pullback. Since by construction $\Pi_1(\X\times_\Y\X)$ is an internal groupoid, \eqref{eq:central_2} holds for $\Pi_1(\pi_1)$, and then by \autoref{thm:pb_stab} it also holds for $\pi_1$ and thus for $f$.
	
	Assuming now that \eqref{eq:central_2} holds, then again by \autoref{thm:pb_stab} it also holds with $\pi_1\colon\X\times_\Y\X\to \X$, so that for all $i,j$ such that $0\leq i<j \leq n$,
	\[Eq[(\pi_1)_n]\wedge D'_i\wedge D'_j =\Delta_{X_n\times_{Y_n}X_n}.\]
	But $\pi_1$ is a split epimorphism in the category of simplicial objects of $\C$. Thus in particular, for all $0\leq i<j \leq n$, $(\pi_1)_n$ and $D_i'\wedge D_j'$ satisfy the assumptions of \autoref{thm:split}, and thus we have
	\[Eq[(\pi_1)_n]\wedge H_n(\X \times_{\Y} \X)=Eq[(\pi_1)_n]\wedge \left(\smashoperator[r]{\bigvee_{0\leq i<j\leq n}} D_i'\wedge D_j'\right)=\Delta_{X_n\times_{Y_n}X_n}.\]
	This implies that the left-hand square is a pullback; thus $\pi_1$ is a trivial extension, and $f$ is a central extension.
\end{proof}

The equivalence relation $F_2\wedge D_0\wedge D_1$ is the kernel pair of the arrow $\theta^2_2 \colon X_2\to \Lambda^2_2(\X)\times_{\Lambda^2_2(\Y)} Y_2$ defined as in \eqref{eq:kan_diag}. Since $\theta^2_2$ is a regular epimorphism in $\C$ whenever $f$ is in $\F$, $F_2\wedge D_0\wedge D_1$ is trivial if and only if the square
\[\begin{tikzcd} X_2 \ar[r, "f_2"] \ar[d, "\lambda^2_2"'] & Y_2 \ar[d,"\lambda^2_2"] \\ \Lambda^2_2(\X)\ar[r,"\Lambda^2_2(f)"'] & \Lambda^2_2(\Y)\end{tikzcd}\]
is a pullback. The triviality of $F_2\wedge D_0\wedge D_2$ and $F_2\wedge D_1\wedge D_2$ can be interpreted in the same way with the horn objects $\Lambda^2_1$ and $\Lambda^2_0$.

Moreover, the higher order conditions $F_n\wedge D_i\wedge D_j=\Delta_{X_n}$ imply that all the morphisms $\theta^n_k$, for $n\geq 2$, are isomorphisms, and thus that all squares
\[\begin{tikzcd} X_n \ar[r, "f_n"] \ar[d, "\lambda^n_k"'] & Y_n \ar[d,"\lambda^n_k"] \\ \Lambda^n_k(\X)\ar[r,"\Lambda^n_k(f)"'] & \Lambda^n_k(\Y)\end{tikzcd}\]
are pullbacks. One can prove that the converse is true as well.

\begin{coro}An extension $f\colon\X\to \Y$ in $\Simp(\C)$ is central with respect to the reflection of $\Simp(\C)$ into $\Grpd(\C)$ if and only if $f$ is an exact fibration at all dimensions $n\geq 2$ in the sense of Glenn (\cite{G82}).
\end{coro}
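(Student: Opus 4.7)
The plan is to interpret the corollary as a reformulation of the characterization in \autoref{thm:char_central_exts}, using the identification made in the preceding text: since $\theta^n_k$ is a regular epimorphism whenever $f\in\F$, the horn square at $(n,k)$ is a pullback exactly when its kernel pair $F_n\wedge\bigwedge_{i\neq k}D_i$ is $\Delta_{X_n}$. Hence the corollary amounts to proving that the triviality of these ``all but one'' intersections, for all $n\geq 2$ and all $0\leq k\leq n$, is equivalent to the conditions \eqref{eq:central_1} and \eqref{eq:central_2}.

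One direction is immediate: condition \eqref{eq:central_2} forces the larger relation $F_n\wedge\bigwedge_{l\neq k}D_l$ to be trivial, either by containment (when $n\geq 3$) or because the two families of conditions literally coincide (when $n=2$, since the only pair of indices different from $k$ is $\{0,1,2\}\setminus\{k\}$). For the converse I would argue by induction on $n\geq 2$. The base $n=2$ is again tautological. For the inductive step at $n\geq 3$, I would fix a pair $i<j$ and any face index $l$ and use simpliciality of $f$ to get
\[d_l(F_n\wedge D_i\wedge D_j)\leq F_{n-1}\wedge d_l(D_i\wedge D_j);\]
the explicit formulas recorded after \autoref{lem:reg_po_simp} rewrite the right-hand side as $F_{n-1}\wedge D_{i'}\wedge D_{j'}$ for some $0\leq i'<j'\leq n-1$, which is trivial by the induction hypothesis. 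Therefore $F_n\wedge D_i\wedge D_j\leq D_l$ for every $l$, so in particular $F_n\wedge D_i\wedge D_j\leq\bigwedge_{l\neq k}D_l$ for any $k\notin\{i,j\}$ (which exists as $n\geq 3$); the standing hypothesis then yields triviality. The residual equation \eqref{eq:central_1} is obtained by applying $d_1$ to the triviality at $(n,k)=(2,1)$.

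The only point requiring care is the inclusion $d_l(F_n\wedge D_i\wedge D_j)\leq F_{n-1}\wedge d_l(D_i\wedge D_j)$, which is just the generic fact that the direct image of a finite intersection lies in the intersection of the direct images, combined with the fact that $f$ intertwines the face maps of $\X$ and $\Y$; once this is paired with the index-shift formulas for $d_l(D_i\wedge D_j)$ already catalogued in the paper, the induction runs smoothly and I foresee no deeper obstacle.
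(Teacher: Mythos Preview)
Your argument is correct and in fact supplies the details that the paper omits: the paper itself does not prove the corollary, but only remarks in the preceding paragraph that the conditions $F_n\wedge D_i\wedge D_j=\Delta_{X_n}$ force all $\theta^n_k$ to be isomorphisms, and then asserts ``One can prove that the converse is true as well.'' Your inductive reduction via the face maps is exactly the kind of argument one expects here, and it mirrors the induction used in \autoref{prop:grpds_in_simp}.

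Two small slips worth noting. First, you call $F_n\wedge\bigwedge_{l\neq k}D_l$ the ``larger'' relation, but it is contained in $F_n\wedge D_i\wedge D_j$ whenever $i,j\neq k$; the containment you use is correct, only the word is reversed. Second, the image formulas $d_l(D_i\wedge D_j)=D_{i'}\wedge D_{j'}$ from the corollary after \autoref{lem:reg_po_simp} are stated only for $l\notin\{i,j\}$; your phrasing ``any face index $l$'' glosses over this, but since for $l\in\{i,j\}$ the containment $F_n\wedge D_i\wedge D_j\leq D_l$ is trivial, the conclusion $F_n\wedge D_i\wedge D_j\leq\bigwedge_l D_l$ still holds and the argument goes through.
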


\section{Comparison with simplicial sets}
\label{sec:sets}

As noted before, the left adjoint to the nerve functor between groupoids and simplicial sets is the \emph{fundamental groupoid} functor \cite{GZ67}. For a simplicial set $\X$ which satisfies the Kan condition, also called a \emph{quasigroupoid}, this left adjoint can alternatively be described as the \emph{homotopy groupoid} (see \cite{BV73,J02}). One defines the homotopy relation on $X_1$ by saying that two elements (or $1$-simplices) $f,g\in X_1$ are homotopic if and only if there exists $\alpha\in X_2$ such that $d_0(\alpha) = f$, $d_1(\alpha)=g$ and $d_2(\alpha)=s_0d_1f=s_0d_1g$. This is always a reflexive relation (since for a given $f$ one can take $\alpha=s_0f$), and using the Kan condition one can then prove that this is actually an equivalence relation. The homotopy groupoid is then the groupoid whose objects are just the elements of $X_0$, arrows are homotopy classes of $1$-simplices, identities defined by the classes of degenerate $1$-simplices, and composition defined by the existence of fillers for $(2,1)$-horns (with two sided inverses defined by the existence of fillers for the outer horns).

This relation can be interpreted in any regular category as follows : first take the pullback
\begin{equation}\label{diag:pb_homotopy}
\begin{tikzcd}
X_2\times_{X_1} X_0 \pb\ar[d,"\pi_1"']\ar[r,"\pi_2"]& X_0\ar[d,"s_0"] \\ X_2\ar[r,"d_2"']& X_1,
\end{tikzcd}\end{equation}
and then factorize the morphism $(d_0,d_1)\pi_1\colon X_0\times_{X_1} X_2\to X_1\times X_1$ as a regular epimorphism $q\colon P\to R$ followed by a monomorphism $r=(\rho_1,\rho_2)\colon R\to X_1\times X_1$, so that $R$ is a relation on $X_1$. As in the case of sets, this is a reflexive relation; indeed, the simplicial identities imply that
\[(\rho_1,\rho_2)(q\langle d_1,s_0\rangle)=(d_0,d_1)\pi_2\langle d_1,s_0 \rangle = (d_0,d_1)s_0=(1_{X_1},1_{X_1}).\]
This relation coincides with $d_0(D_1\wedge D_2)$ whenever $\X$ satisfies the Kan condition, as we shall now see. In fact it will be helpful to prove a slightly more general result:

\begin{lemma}
	Given any regular epimorphism $f\colon\X\to \Y$ between two simplicial objects, let us consider the pullback
	\[\begin{tikzcd}
	X_1\times_{Y_1} Y_0 \pb \ar[d,"\pi_1"']\ar[r,"\pi_2"]& Y_0\ar[d,"s^\Y_0"] \\ X_1\ar[r,"f_1"']& Y_1.
	\end{tikzcd}\]
	Then $d^\X_0(D^\X_1\wedge F_1)$ is equal to the regular image of $(d_0,d_1)\pi_1 \colon X_1\times_{Y_1} Y_0\to X_0\times X_0$.
	\label{lem:alternative_homotopy}
\end{lemma}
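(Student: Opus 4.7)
This is an equality of two subobjects of $X_0 \times X_0$, and I would prove both inclusions separately, using a direct construction for the forward inclusion and a Kan-type horn filling for the reverse one.

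For the inclusion of the regular image of $(d_0,d_1)\pi_1$ into $d_0^\X(D_1^\X \wedge F_1)$, define
\[
\psi := \langle \pi_1,\ s_0^\X d_1^\X \pi_1 \rangle \colon X_1 \times_{Y_1} Y_0 \longrightarrow X_1 \times X_1.
\]
The simplicial identity $d_1 s_0 = 1$ shows that $\psi$ factors through $D_1^\X$; the pullback equation $f_1 \pi_1 = s_0^\Y \pi_2$, together with $f_1 s_0^\X = s_0^\Y f_0$ and $d_1^\Y s_0^\Y = 1$, shows it also factors through $F_1$. Postcomposing with $d_0^\X \times d_0^\X$ and using $d_0^\X s_0^\X = 1$ recovers $(d_0^\X, d_1^\X)\pi_1$, whence the inclusion of regular images.

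For the reverse inclusion, I would construct a regular-epimorphic cover of $D_1^\X \wedge F_1$ mapping compatibly to $X_1 \times_{Y_1} Y_0$. Let $R := X_2 \times_{Y_2} Y_1$ be the pullback of $f_2$ along $s_1^\Y$, with projections $\pi_X^R$ and $\pi_Y^R$. The recipe $(\alpha,\gamma)\mapsto(d_1 \alpha, d_2 \alpha)$ defines $p\colon R \to D_1^\X \wedge F_1$ (using $d_1 d_1 = d_1 d_2$ and $d_i s_1 = 1$ for $i=1,2$), and $(\alpha, \gamma) \mapsto (d_0^\X \alpha, d_0^\Y \gamma)$ defines $h\colon R \to X_1 \times_{Y_1} Y_0$ (the required identity $f_1 d_0^\X \alpha = s_0^\Y d_0^\Y \gamma$ follows from $f_1 d_0 = d_0^\Y f_2$ and $d_0 s_1 = s_0 d_0$). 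The simplicial identities $d_0 d_0 = d_0 d_1$ and $d_1 d_0 = d_0 d_2$ then make $(d_0^\X \times d_0^\X) p$ and $(d_0^\X, d_1^\X)\pi_1 h$ coincide on $R$, so the inclusion will follow once $p$ is shown to be a regular epimorphism.

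The main obstacle is exactly this last claim. The idea is to realise $D_1^\X \wedge F_1$ as a subobject of $\Lambda^2_0(\X) \times_{\Lambda^2_0(\Y)} Y_2$ via the monomorphism $(u, v) \mapsto \bigl((u, v),\, s_1^\Y(f_1 u)\bigr)$, which captures precisely those $((u,v),\beta)$ for which $\beta$ lies in the image of $s_1^\Y$. By uniqueness of the section, the pullback of $\theta^2_0$ along this embedding is isomorphic to $R$, and its second projection is $p$. Since by the remark after \autoref{lem:reg_po_simp} every regular epimorphism of simplicial objects is a Kan fibration for $2$-horns, $\theta^2_0$ is a regular epimorphism; stability of regular epimorphisms under pullback in the regular category $\Simp(\C)$ then shows that $p$ is a regular epimorphism, completing the proof.
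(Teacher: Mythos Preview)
Your proof is correct and uses essentially the same ingredients as the paper: the auxiliary object $R = X_2\times_{Y_2}Y_1$ (the pullback of $f_2$ along $s_1^\Y$), the map $h\colon R\to X_1\times_{Y_1}Y_0$ induced by $d_0$, and the Kan-fibration property that $\theta_0^2$ is a regular epimorphism, which identifies $R$ as the pullback of $\theta_0^2$ along the embedding $D_1\wedge F_1\hookrightarrow \Lambda_0^2(\X)\times_{\Lambda_0^2(\Y)}Y_2$.

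The only real difference is organizational. The paper observes that $h$ is not merely a map but a \emph{split} epimorphism (the splittings $s_0$ of the various $d_0$'s are compatible with the horizontal maps in the cube, via the identity $s_1s_0=s_0s_0$), so the image of $(d_0,d_1)\pi_1$ equals the image of $(d_0,d_1)\pi_1 h = (d_0\times d_0)(d_1,d_2)\pi_1'$; it then computes the image of $(d_1,d_2)\pi_1'$ as $D_1\wedge F_1$ in one stroke, giving the equality directly rather than as two inclusions. Your map $\psi$ is in effect $p$ composed with the section of $h$, so your forward inclusion is a shadow of the paper's split-epi observation. Both routes work; the paper's is a bit slicker, while yours makes the two halves of the argument more visibly independent.
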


\begin{proof}
	Consider the diagram
	\begin{equation}
		\begin{tikzcd}[column sep={4em,between origins},row sep={3em,between origins}]
		Y_1 \times_{Y_2} X_2 \ar[dr,"{\pi_2'}"']\ar[dd,"{d^\Y_0\times d^\X_0}"', dotted]\ar[rr,"{\pi_1'}"] & & X_2 \ar[dr,"f_2"]\ar[dd,"d^\X_0"', near end] \ar[rr,"{(d_1^\X,d_2^\X)}"]& & X_1 \times X_1 \ar[dd,"{d_0^\X\times d^\X_0}"]\\
		& Y_1 \ar[rr,"s_1^\Y",crossing over,near start]& & Y_2 & \\
		Y_0 \times_{Y_1} X_1 \ar[rr,"{\pi_1}",near end] \ar[dr,"{\pi_2}"'] & & X_1 \ar[dr,"f_1"] \ar[rr,"{(d^\X_0,d^\X_1)}",near start]& & X_0\times X_0 \\
		& Y_0 \ar[from=uu,crossing over,"d^\Y_0"',near start]\ar[rr,"s^\Y_0"'] & & Y_1 \ar[from=uu,"d^\Y_0", crossing over, near start] & \end{tikzcd}\label{eq:big}
	\end{equation}
	where the top and bottom faces of the cube are pullbacks. Since all the vertical solid arrows are split by some degeneracy morphism, and the horizontal morphisms commute with these sections, the dotted arrow is a split epimorphism as well. In particular, the image factorization of $(d^\X_0,d^\X_1)\pi_1$ is the same as that of $(d^\X_0,d^\X_1)\pi_1(d_0\times d_0)=(d^\X_0\times d^\X_0 )(d^\X_1,d^\X_2)\pi_1'$. If we prove that the image of $(d^\X_1,d^\X_2)\pi_2'$ in $X_1\times X_1$ is the equivalence relation $D_1\wedge F_1$, then it would follow that the image of $(d^\X_0\times d^\X_0 )(d^\X_1,d^\X_2)\pi_1'$ is $d_0(D_1\wedge F_1)$, which would conclude the proof.
	
	Since we have a decomposition of $f_2$ given by the diagram
	\[\begin{tikzcd} {X_2} \ar[drr, bend left=15,"{f_2}"]\ar[ddr,bend right=15,"{\lambda_0^2}"'] \ar[dr,"{\theta_0^2}" description] & & \\ & {\Lambda_0^2(\X)\times_{\Lambda_0^2(\Y)} Y_2 } \pb \ar[r, "{\phi_2}"] \ar[d,"{\phi_1}"] & {Y_2} \ar[d,"{\lambda_0^2}"]\\ & {\Lambda_0^2(\X)} \ar[r,"{\Lambda_0^2(f)}"'] & \Lambda_0^2(\Y), \end{tikzcd}\]
	we can rewrite the top pullback in \eqref{eq:big} as the upper rectangle in the following diagram :
	\[\begin{tikzcd}
	X_2\times_{Y_2} Y_1 \pb \ar[r, "q"] \ar[d,"\pi_1'"'] & P\pb \ar[d,"m"] \ar[r]& Y_1\ar[d,"s_1^\Y"] \\
	X_2 \ar[r,"{\theta^2_0}"] \ar[d,"{\lambda_0^2}"']& \Lambda^2_0(\X)\times_{\Lambda^2_0(\Y)} Y_2 \pb \ar[d,"\phi_1"'] \ar[r,"\phi_2"']& Y_2 \ar[d,"{\lambda^2_0}"] \\
	\Lambda^2_0(\X) \ar[r,equal] & \Lambda^2_0(\X) \ar[r,"\Lambda^2_0(f)"'] & \Lambda^2_0(\Y)
	\end{tikzcd}\]
	Now since $\Delta_{Y_1} = (d^\Y_1,d^\Y_2) s^\Y_1 \colon Y_1\to Y_1\times Y_1$, the composition $\lambda^2_0s_1$ is a monomorphism, and thus so is $\phi_1m$. Since $\lambda^2_0$ is the regular epimorphism in the factorization of $(d^\X_1,d^\X_2)\colon X_2\to X_1\times X_1$, $P$ is the image of the morphism $(d^\X_1,d^\X_2)\pi_1'$. On the other hand, the right-hand rectangle above coincides with the left-hand square in the rectangle
	\[\begin{tikzcd}
	P \pb \ar[d,"\phi_1m"'] \ar[r] & Y_1\pb \ar[d] \ar[r,equal] & Y_1 \ar[d,"{\Delta}"] \\
	\Lambda^2_0(\X) \ar[r,"{\Lambda^2_0(f)}"'] & \Lambda^2_0(\Y) \ar[r] & Y_1\times Y_1.
	\end{tikzcd}\]
	Since the two squares are pullbacks, the whole rectangle is one as well. But this is the same as the outer rectangle in
	\[\begin{tikzcd}
	P \pb \ar[d,"\phi_1m"'] \ar[r] & F_1\pb \ar[d] \ar[r] & Y_1 \ar[d,"{\Delta}"] \\
	\Lambda^2_0(\X) \ar[r,"{}"] & X_1\times X_1 \ar[r,"{f_1\times f_1}"'] & Y_1\times Y_1,
	\end{tikzcd}\]
	where the two squares are again pullbacks. Thus $P$ coincides with the intersection $D_1\wedge F_1$, which concludes the proof.
\end{proof}

As a consequence we have

\begin{prop}
	If $\X$ is a Kan complex in a regular category $\C$, the relation $H_1(\X)$ coincides with the image of the morphism $(d^\X_0,d^\X_1)\pi_1\colon X_0\times_{X_1} X_2\to X_1\times X_1$, where $\pi_1$ is determined by the pullback \eqref{diag:pb_homotopy}.
\end{prop}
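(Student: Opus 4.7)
The plan is to specialise Lemma \ref{lem:alternative_homotopy} to the regular epimorphism $\epsilon_\X\colon Dec(\X) \to \X$ furnished by the décalage construction (which is a regular epimorphism, as recalled in the preliminaries). At level $0$, $\epsilon_\X$ is the face morphism $d_1^\X \colon X_1 \to X_0$; at level $1$, it is $d_2^\X \colon X_2 \to X_1$. Consequently the pullback $Dec(\X)_1 \times_{X_1} X_0$ featuring in the lemma coincides with the pullback \eqref{diag:pb_homotopy}, and the kernel pair $F_1$ of $\epsilon_1$ is exactly $D_2^\X$.

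Now the face morphisms of $Dec(\X)$ at level $1$ are $d_0^\X$ and $d_1^\X$, so the morphism $(d_0^{Dec},d_1^{Dec})\pi_1$ appearing in Lemma \ref{lem:alternative_homotopy} is precisely $(d_0^\X,d_1^\X)\pi_1\colon X_0\times_{X_1} X_2 \to X_1\times X_1$, and $D_1^{Dec(\X)}$ is just $D_1^\X$ (same kernel pair of $d_1^\X\colon X_2\to X_1$). Lemma \ref{lem:alternative_homotopy} therefore identifies the regular image of this morphism with
\[d_0^{Dec(\X)}\bigl(D_1^{Dec(\X)}\wedge F_1\bigr) = d_0^\X\bigl(D_1^\X\wedge D_2^\X\bigr),\]
and Lemma \ref{lem:H_1} rewrites the right-hand side as $d_1^\X(D_0^\X\wedge D_2^\X) = H_1(\X)$, yielding the claim.

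The only real content is recognising that the décalage packages together exactly the data required by Lemma \ref{lem:alternative_homotopy}; once this bookkeeping is spotted, the proposition is immediate and, in fact, the Kan condition is not used — it is present in the statement only because the whole section is motivated by the comparison with the classical simplicial-set homotopy relation, which itself requires the Kan condition to be well-behaved (e.g.\ transitive).
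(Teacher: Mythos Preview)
Your proof is correct and follows exactly the paper's approach: apply Lemma~\ref{lem:alternative_homotopy} to $\epsilon_\X\colon Dec(\X)\to \X$, then invoke Lemma~\ref{lem:H_1}. Your unpacking of the d\'ecalage bookkeeping is accurate.

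One small caveat on your closing remark: the proof of Lemma~\ref{lem:alternative_homotopy} uses that $\lambda_0^2$ is the regular-epimorphism part of the factorisation of $(d_1,d_2)$. Under the paper's standing convention ($\C$ regular Mal'tsev) this is automatic, so the Kan hypothesis is indeed redundant there; but the proposition is phrased for a general regular category, and in that generality the relevant horn-filling for $Dec(\X)$ is what makes the lemma applicable. So ``not used'' is a little strong---rather, it is already implied by the ambient Mal'tsev assumption when that is in force.
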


\begin{proof}
	It suffices to apply \autoref{lem:alternative_homotopy} to the case where $f_1=\epsilon_\X\colon Dec(\X)\to \X$.
\end{proof}

\begin{remark}If one sees a Kan complex as a quasigroupoid or $\infty$-groupoid, then the left adjoint to the nerve or inclusion functor $\Grpd\to \mathbf{Kan}$ is in a sense a "strictification", which turns quasigroupoids into actual groupoids.
\end{remark}

The equivalence relation $d_0(D_1\wedge D_2\wedge F_2)$ which appears in our characterization of central extensions admits an alternative construction, similar to that of $H_1(\X)$.

More precisely, if we take now $L$ to be the limit of the lower part of the diagram
\begin{equation}\label{eq:limit}\begin{tikzcd} & & L\ar[dll,"\rho_1"',dotted]\ar[d,"\rho_2",dotted]\ar[drr,"\rho_3",dotted]& & \\ X_0 \ar[dr,"s_0"'] & & X_2\ar[dl,"d_2"] \ar[dr,"f_2"'] & & Y_1 \ar[dl,"s_0"] \\ & X_1 & & Y_2 &\end{tikzcd}\end{equation}
(where the dotted arrows form the limit cone), then we have

\begin{prop}
	If $\X,\Y$ are Kan complexes and $f\colon \X\to \Y$ is a Kan fibrations in a regular category $\C$, the relation $d_1(F_2\wedge D_0\wedge D_2)$ coincides with the image of the morphism $(d^\X_0,d^\X_1)\rho_2\colon X_0\times_{X_1} X_2\to X_1\times X_1$.
\end{prop}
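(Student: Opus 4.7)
The plan is to deduce this from Lemma \ref{lem:alternative_homotopy} applied to an auxiliary regular epimorphism of simplicial objects, combined with a rotation-of-indices argument patterned on Lemma \ref{lem:H_1}. Specifically, consider the morphism
\[
\psi = (Dec(f), \epsilon_\X) \colon Dec(\X) \to Dec(\Y) \times_\Y \X,
\]
whose $n$-th component $\psi_n = (f_{n+1}, d_{n+1}^\X) \colon X_{n+1} \to Y_{n+1} \times_{Y_n} X_n$ encodes the compatibility of the d\'ecalage map $\epsilon$ with $f$. Since $f$ is a Kan fibration, Lemma \ref{lem:reg_po_simp} gives that the square expressing the commutation of $f$ with the last face is a double extension at every level, hence each $\psi_n$ is a regular epimorphism, and $\psi$ is a regular epimorphism in $\Simp(\C)$.

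I would then apply Lemma \ref{lem:alternative_homotopy} to $\psi$ at level $1$. The kernel pair of $\psi_1 = (f_2, d_2^\X)$ is precisely $F_2 \wedge D_2^\X$, so the relation ``$D_1 \wedge F_1$'' of Lemma \ref{lem:alternative_homotopy} becomes $D_1^\X \wedge D_2^\X \wedge F_2$ on $X_2$. On the other hand, the degeneracy of $Dec(\Y) \times_\Y \X$ at level $0$ is given componentwise by $(s_0^\Y, s_0^\X)\colon Y_1 \times_{Y_0} X_0 \to Y_2 \times_{Y_1} X_1$, and since both $s_0^\Y$ and $s_0^\X$ are split monomorphisms, the pullback
\[
X_2 \times_{Y_2 \times_{Y_1} X_1} (Y_1 \times_{Y_0} X_0)
\]
appearing in Lemma \ref{lem:alternative_homotopy} is isomorphic to the subobject of $X_2$ defined by $f_2 \alpha \in s_0(Y_1)$ and $d_2 \alpha \in s_0(X_0)$, which is exactly the limit $L$, with projection to $X_2$ identified with $\rho_2$. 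The conclusion of Lemma \ref{lem:alternative_homotopy} thus reads
\[
d_0(D_1 \wedge D_2 \wedge F_2) = \text{image of } (d_0, d_1)\rho_2 \colon L \to X_1 \times X_1.
\]

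To recover the stated form $d_1(F_2 \wedge D_0 \wedge D_2)$ on the left-hand side, I would prove the rotation identity $d_0(D_1 \wedge D_2 \wedge F_2) = d_1(D_0 \wedge D_2 \wedge F_2)$, following the pattern of Lemma \ref{lem:H_1}. Using the triple-extension structure of the cubes combining the face maps of $\X$ and $\Y$ with $f$ (Lemma \ref{lem:reg_po_simp}), one extends the image identities in the corollary following Lemma \ref{lem:reg_po_simp} to incorporate the kernel pair $F_n$ of $f$, in particular obtaining $d_2(D_1 \wedge D_3 \wedge F_3) = D_1 \wedge D_2 \wedge F_2$ and $d_0(D_1 \wedge D_3 \wedge F_3) = D_0 \wedge D_2 \wedge F_2$. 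The simplicial identity $d_0 d_2 = d_1 d_0$ then gives
\[
d_0(D_1 \wedge D_2 \wedge F_2) = d_0 d_2(D_1 \wedge D_3 \wedge F_3) = d_1 d_0(D_1 \wedge D_3 \wedge F_3) = d_1(D_0 \wedge D_2 \wedge F_2),
\]
which combined with the previous step yields the claim.

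The main obstacle will be the extended image identities used in the rotation step, i.e., the technical verification that direct images of intersections of the form $D_j \wedge D_k \wedge F_n$ under $d_i$ transform in the same way as the direct images of $D_j \wedge D_k$ alone. This relies on the Mal'tsev hypothesis, ensuring the good interaction of equivalence relations with direct images under regular epimorphisms, together with the Kan fibration property of $f$, which through Lemma \ref{lem:reg_po_simp} supplies the triple-extension structure on the cubes combining $\X$, $\Y$ and $f$.
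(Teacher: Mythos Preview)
Your approach is essentially the paper's: both apply Lemma~\ref{lem:alternative_homotopy} to the comparison morphism $\langle \epsilon_\X, Dec(f)\rangle \colon Dec(\X) \to \X \times_\Y Dec(\Y)$ (this is your $\psi$ with the factors swapped), and both identify the pullback appearing in that lemma with the limit $L$. The paper carries out this identification more explicitly---rewriting $L$ as a single pullback and checking that $(\rho_1,\rho_3)$ factors through $X_0\times_{Y_0} Y_1$---whereas you read $L$ off directly as the fibre of $\psi_1$ over the degeneracy; the content is the same.

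You are right that Lemma~\ref{lem:alternative_homotopy} literally produces $d_0(D_1\wedge D_2\wedge F_2)$ rather than the $d_1(D_0\wedge D_2\wedge F_2)$ appearing in the statement, and you supply the rotation argument in the style of Lemma~\ref{lem:H_1} that the paper omits. One caution: both your verification that $\psi$ is a regular epimorphism and your extended image identities for $D_j\wedge D_k\wedge F_n$ are justified via Lemma~\ref{lem:reg_po_simp}, which uses the Mal'tsev hypothesis, while the proposition is phrased for a general regular category under Kan assumptions. To stay in that generality you would need to extract the relevant double- and triple-extension properties directly from the Kan-complex and Kan-fibration hypotheses rather than from the Mal'tsev condition (in the ambient Mal'tsev setting of the paper this distinction is of course moot).
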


\begin{proof}
	First, note that the limit in diagram \eqref{eq:limit} can also be obtained as the pullback
	\[\begin{tikzcd} L\pb  \ar[r,"{(\rho_1,\rho_3)}"] \ar[d,"\rho_2"'] & X_0\times Y_1 \ar[d,"{s_0\times s_0}"] \\ X_2 \ar[r, "{(d_2,f_2)}"']& X_1\times Y_2.\end{tikzcd}\]
	Now the image of the morphism $(d_2,f_2)$ is the pullback $X_1\times_{Y_1}Y_2$ of $f_1$ along $d_2$. Moreover, we have
	\begin{align*}f_0\rho_1 & =d_0s_0f_0\rho_1=d_0f_1s_0\rho_1=d_0f_1d_2\rho_2 = d_0d_2f_2\rho_2 \\ & =d_0d_2s_0\rho_3=d_1d_0s_0\rho_3=d_1\rho_3,\end{align*}
	so that $(\rho_1,\rho_3)$ factors through $X_0\times_{Y_0} X_1$. Thus the pullback square above factorises as a rectangle
	\[\begin{tikzcd} L \ar[r,"{\langle\rho_1,\rho_3\rangle}"] \ar[d,"\rho_2"'] & {X_0\times_{Y_0} X_1} \ar[d,"{s_0\times_{s_0}s_0}"] \ar[r]& {X_0\times X_1} \ar[d,"{s_0\times s_0}"] \\ X_2 \ar[r, "{\langle d_2,f_2\rangle}"]& {X_1\times_{Y_1} Y_2} \ar[r]& {X_1\times Y_2},\end{tikzcd}\]
	and one can easily show that the right-hand square is a pullback, and as a consequence so is the left-hand side square. But this square is exactly the pullback that appears if we apply \autoref{lem:alternative_homotopy} to the induced morphism $\langle \epsilon_\X,Dec(f)\rangle \colon Dec(\X)\to \X\times_\Y Dec(\Y)$, which is a regular epimorphism between simplicial objects because the square
	\[\begin{tikzcd} Dec(\X) \ar[r,"Dec(f)"]\ar[d,"\epsilon_\X"'] & Dec(\Y) \ar[d,"\epsilon_\Y"]\\ \X \ar[r,"f"']& \Y \end{tikzcd}\]
	is a double extension in $\C$ for all $n$.
\end{proof}

\section{The relative monotone-light factorization system}

In this section we assume

In order to prove that our Galois structure admits a relative monotone-light factorization system, we use the following criterion, due to Carboni, Janelidze, Kelly and Par\'e in the absolute case and to Chikhladze in the relative case :

\begin{prop}[\cite{CJKP97,C04}]Let $(\C,\mathcal{X},I,\F)$ be an admissible Galois structure. The class $\F$ admits monotone-light factorization if for each object $B$ of $\C$ there is an effective $\F$-descent morphism $p\colon C\to B$ where $C$ is a stabilizing object, i.e. an object such that if $h=me$ is the $(\E,\M)$-factorization of any morphism $h\colon X\to C$, then any pullback of $e$ along a morphism in $\F$ is still in $\E$.
\end{prop}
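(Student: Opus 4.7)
The plan is to produce, for every $f\colon X\to B$ in $\F$, a factorization $f=m\circ e$ with $m\in \M^*$ and $e\in \E'$; the orthogonality $\E'\perp \M^*$ is already part of the setup for the monotone-light situation, so the heart of the matter is the existence of such a factorization.

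First, using the hypothesis, I would pick an effective $\F$-descent morphism $p\colon C\to B$ with $C$ stabilizing, and pull $f$ back to $f'=p^*(f)\colon p^*(X)\to C$, which still lies in $\F$ by pullback-stability. Applying the relative $(\E,\M)$-factorization to $f'$ gives $f'=m'\circ e'$ with $e'\in \E$ and $m'\in \M$; the stabilizing property of $C$ upgrades $e'$ to $\E'$.

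Next, I would descend this factorization along $p$. Denote by $\pi_1,\pi_2\colon C\times_B C \to C$ the kernel pair of $p$, both of which lie in $\F$. The two pullbacks $\pi_i^*(f')$ coincide with the pullback of $f$ along the common composite $p\pi_i$, so pulling $f'=m'e'$ back along each $\pi_i$ yields two $(\E',\M)$-factorizations of the same morphism. Since $\E'\subset \E$ and $\E\perp \M$, uniqueness of these factorizations makes them canonically isomorphic, yielding descent data on both $e'$ and $m'$. Effective $\F$-descent of $p$ then provides arrows $e\colon X\to M$ and $m\colon M\to B$ with $me=f$, $p^*(e)=e'$ and $p^*(m)=m'$.

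It remains to verify $m\in \M^*$ and $e\in \E'$. The first is immediate: $p$ is itself a monadic extension along which $m$ becomes trivial (since $p^*(m)=m'\in\M$), so $m$ is central by definition. The main obstacle is the second. Given any $h\colon Y\to B$ in $\F$, I would form the pullback $\tilde{p}\colon Y\times_B C\to Y$ of $p$ along $h$; effective $\F$-descent morphisms being pullback-stable, $\tilde{p}$ remains such. Pullback-pasting identifies the pullback of $h^*(e)$ along $\tilde{p}$ with $q^*(e')$, where $q\colon Y\times_B C\to C$ is the pullback of $h$ along $p$ and lies in $\F$; since $e'\in \E'$, this pullback lies in $\E$. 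The descent properties of $\tilde{p}$, combined with the admissibility of the Galois structure (which governs how $I$ interacts with the relevant pullbacks), then let us reflect this through $\tilde{p}$ and conclude $h^*(e)\in \E$, whence $e\in \E'$. This last reflection step is where the proof is most delicate, and where the interplay between admissibility, effective descent, and the localized class $\E$ really does the work.
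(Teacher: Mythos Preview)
The paper does not give a proof of this proposition: it is quoted, with attribution to \cite{CJKP97,C04}, purely as a black-box criterion that is then applied by exhibiting $Dec(\X)$ as a stabilizing object. So there is no ``paper's own proof'' to compare against; your attempt stands on its own.

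Your outline is the standard one and is correct through the construction of $f=me$ with $m\in\M^*$ by descent. The genuine gap is exactly where you flag it. The step ``$\tilde p^*(h^*(e))\in\E$ and $\tilde p$ is effective $\F$-descent, hence $h^*(e)\in\E$'' is not a formal consequence of admissibility or of effective descent: the class $\E=\{g:I(g)\text{ invertible}\}$ need not reflect along effective descent morphisms in an arbitrary admissible Galois structure, and invoking ``the descent properties of $\tilde p$, combined with admissibility'' does not by itself supply the missing implication. In the sources you would be reproducing, the argument is organised differently: one shows that $\M$ is pullback-stable so that the two $(\E,\M)$-factorizations obtained over the kernel pair of $p$ agree, descends the \emph{middle object} (not just $m'$), and then identifies $e$ as lying in $\E'$ by comparing factorizations rather than by trying to reflect membership in $\E$ through an effective descent morphism. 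Concretely, for any $h\in\F$ one pulls back the whole descended factorization along $h$ and uses pullback-stability of $\M$ together with uniqueness of $(\E,\M)$-factorizations over the stabilizing cover to identify $h^*(e)$ with the $\E$-part of a factorization over a stabilizing object; this gives $h^*(e)\in\E$ directly, without appealing to a descent property of $\E$. If you rewrite your final paragraph along those lines, the sketch becomes a proof.
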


We will prove that, in our case, the shifting $Dec(\X)$ of a simplicial object $\X$ is always stabilizing. For this it suffices to prove that exact objects are stabilizing since we have :

\begin{prop}[\cite{EGoV12}, Proposition 3.9]Any simplicial object that is contractible and also satisfies the Kan condition is exact.
\end{prop}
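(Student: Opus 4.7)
The plan is to prove by induction on $n$ that the canonical morphism $\kappa_n\colon X_n\to K_n(\X)$ is a regular epimorphism for every $n\geq 2$. A contractible simplicial object typically comes equipped with a system of extra degeneracies $h_n\colon X_n\to X_{n+1}$ satisfying $d_0h_n=1_{X_n}$, $d_{i+1}h_n=h_{n-1}d_i$ and $s_{i+1}h_n=h_{n+1}s_i$; these witness the null-homotopy of the identity on $\X$. Together with the Kan condition, which guarantees that every horn inclusion $\lambda^n_k\colon X_n\to\Lambda^n_k(\X)$ is a regular epimorphism, this should be enough to fill an entire boundary rather than just a horn.

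For the inductive step, consider a generalized element of $K_n(\X)$, that is, an $(n+1)$-tuple $(x_0,\dots,x_n)$ of morphisms $T\to X_{n-1}$ satisfying $d_ix_j=d_{j-1}x_i$ whenever $i<j$. Forgetting $x_0$ yields a generalized element of $\Lambda^n_0(\X)$, which by the Kan condition lifts to some $y\colon T\to X_n$ with $d_iy=x_i$ for all $i\geq 1$. The only possible mismatch is that $d_0y$ need not agree with $x_0$. The extra degeneracy $h_{n-1}$ applied to $x_0$ gives an auxiliary $n$-simplex whose $0$-th face is exactly $x_0$ and whose higher faces are $h_{n-2}d_{i-1}x_0$; this is the raw material for the correction.

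To produce a corrected filler $y'$ with $d_0y'=x_0$ and $d_iy'=x_i$ for $i\geq 1$, I would apply the Kan property one dimension higher: assemble an $(n+1,0)$-horn whose faces are $y$, $h_{n-1}x_0$, and the composites $h_{n-1}x_i$ for $i\geq 2$; check compatibility using the identities above and the simplicial identities relating $h$ with the $d_i$ and $s_i$; then take its filler. The distinguished missing face will be the desired $y'$, and the identities together with $d_0h_{n-1}=1$ ensure it has the prescribed boundary. This reduces the existence of a preimage of $(x_0,\dots,x_n)$ under $\kappa_n$ to two successive applications of the Kan property, each of which is a regular epimorphism by hypothesis; since regular epimorphisms are stable under composition and pullback, one concludes that $\kappa_n$ itself is a regular epimorphism.

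The main obstacle is purely combinatorial: organizing the simplicial identities so that every horn assembled above is genuinely compatible and every face of the final filler comes out correctly. I would begin by working out the base case $n=2$ explicitly, where $K_2(\X)$ is just the simplicial kernel $X_1\times_{X_0}^{(3)}X_1$, and then formalize the inductive step with careful attention to the commutation relations between $h_n$, $d_i$ and $s_i$. In the abstract regular setting, each ``choose a filler'' step is replaced by a pullback along the regular epimorphism $\lambda^n_k$ (or its higher analogue), and stability of regular epimorphisms under pullback and composition in a regular category lets the argument go through without requiring any enrichment over sets.
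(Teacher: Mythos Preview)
The paper does not give its own proof of this statement; it is quoted from \cite{EGoV12} (their Proposition~3.9) and used only to deduce that $Dec(\X)$ is exact. So there is nothing in the paper to compare your argument against directly.

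Your overall strategy---use the extra degeneracy to push boundary data up one dimension into a horn, then fill via the Kan condition, interpreting each ``choice of filler'' as a pullback along a regular epimorphism---is correct and is essentially how the result is proved. But the execution can be streamlined, and the specific $(n+1,0)$-horn you describe (with faces $y$, $h_{n-1}x_0$, and $h_{n-1}x_i$ for $i\geq 2$) is not compatible as written: wherever you place $y$ among positions $1,\dots,n+1$, one of the relations $d_iw_j=d_{j-1}w_i$ either involves $d_0y$, which you do not control, or forces some $x_k$ to equal $h_{n-2}d_\ell x_m$, which fails in general. The preliminary filling to obtain $y$ is in fact a red herring, and no induction on $n$ is needed.

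The direct argument is a single step for each $n\geq 2$. The assignment $(x_0,\dots,x_n)\mapsto (h_{n-1}x_0,\dots,h_{n-1}x_n)$, with $h_{n-1}x_{j-1}$ in position $j$ for $1\leq j\leq n+1$, defines a morphism $\phi\colon K_n(\X)\to \Lambda^{n+1}_0(\X)$; the compatibilities $d_i(h_{n-1}x_{j-1})=d_{j-1}(h_{n-1}x_{i-1})$ follow from $d_ih_{n-1}=h_{n-2}d_{i-1}$ (for $i\geq 1$) together with the sphere relations on the $x_j$. Pulling back the regular epimorphism $\lambda^{n+1}_0\colon X_{n+1}\to\Lambda^{n+1}_0(\X)$ along $\phi$ yields a regular epimorphism $p\colon P\to K_n(\X)$ and a morphism $q\colon P\to X_{n+1}$ with $\lambda^{n+1}_0 q=\phi p$; then $d_0h_{n-1}=1$ gives $\mu_i\,\kappa_n\, d_0\, q = d_id_0q = d_0d_{i+1}q = d_0h_{n-1}\mu_ip = \mu_ip$ for all $i$, hence $\kappa_n\, d_0\, q=p$. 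Since the regular epimorphism $p$ factors through $\kappa_n$, the latter is itself a regular epimorphism.
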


As a consequence, if $\X$ satisfies the Kan condition, then its shifting $Dec(\X)$ is exact.

We will need the following characterization of images in regular categories:
\begin{prop}[\cite{CKP93}]Let $f\colon X\to Z$ and $g\colon Y\to Z$ be two morphisms in a regular category $\C$. Then $g$ factors through the regular image of $f$ if and only if there exist an object $W$ of $\C$ with a morphism $h\colon W\to X$ and a regular epimorphism $q\colon W\to Y$ such that $fh=gq$.\label{prop:image}
\end{prop}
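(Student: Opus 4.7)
The plan is to factor $f$ through its regular image and then verify the two implications separately. Write $f=me$ with $e\colon X \to I$ a regular epimorphism and $m\colon I \hookrightarrow Z$ a monomorphism, where $I$ denotes the regular image of $f$. The forward implication should be a straightforward pullback construction, while the backward direction should rely on the universal property of $q$ as the coequalizer of its kernel pair, together with the fact that $m$ is a monomorphism.

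For the forward direction, suppose $g$ factors through $m$, say $g=m\tilde g$ for some $\tilde g\colon Y\to I$. I would then form the pullback
\[\begin{tikzcd} W \pb \ar[r,"q"] \ar[d,"h"'] & Y \ar[d,"\tilde g"] \\ X \ar[r,"e"'] & I. \end{tikzcd}\]
Since regular epimorphisms are pullback stable in a regular category, $q$ is a regular epimorphism, and the commutativity of this square together with $f=me$ and $g=m\tilde g$ yields $fh=me\cdot h = m\tilde g\cdot q = gq$, as required.

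For the converse, assume we are given $h$ and $q$ with $q$ a regular epimorphism and $fh=gq$. Let $(k_1,k_2)\colon K\rightrightarrows W$ be the kernel pair of $q$; then $gqk_1=gqk_2$, which, combined with $gq=meh$, gives $m(ehk_1)=m(ehk_2)$. Since $m$ is a monomorphism we obtain $ehk_1=ehk_2$, and hence the regular epimorphism $q$ (being the coequalizer of its kernel pair) induces a unique morphism $\tilde g\colon Y\to I$ with $\tilde gq=eh$. Then $m\tilde g\cdot q=meh=fh=gq$, and cancelling the epimorphism $q$ yields $g=m\tilde g$, so $g$ factors through the image of $f$.

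I do not expect any real obstacle: both directions are formal consequences of the standard behaviour of the regular-epi/mono factorization in a regular category, specifically pullback stability of regular epimorphisms in one direction and the characterization of regular epimorphisms as coequalizers of their kernel pairs in the other. The only subtle point to keep straight is the use of $m$ being monic to cancel it after composing with the kernel pair, ensuring that the factorization through $q$ can then be lifted back through $m$.
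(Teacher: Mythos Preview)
Your argument is correct. Both directions are standard: the forward one uses pullback stability of regular epimorphisms, and the converse uses that a regular epimorphism is the coequalizer of its kernel pair together with cancellation of the monomorphism $m$ and then of the epimorphism $q$.

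Note, however, that the paper does not actually prove this proposition: it is quoted from \cite{CKP93} and stated without proof, so there is no argument in the paper to compare yours against. Your proof is the expected one and would be perfectly acceptable as a justification of the cited result.
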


\begin{lemma}\label{lem:inequality_exact}
	If $\Y$ is exact at $Y_2$, and $f\colon \X\to \Y$ is a regular epimorphism in $\Simp(\C)$, then
	\[d_0(D_1\wedge D_2)\wedge F_1 = d_0(D_1\wedge D_2\wedge F_2).\]
\end{lemma}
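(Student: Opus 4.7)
The inclusion $d_0(D_1\wedge D_2\wedge F_2)\leq d_0(D_1\wedge D_2)\wedge F_1$ is immediate: the direct image is monotone, and \autoref{lem:reg_po_simp} together with \autoref{thm:reg_PO_split} shows that the square $f_1 d_0 = d_0 f_2$ (at dimension $2$) is a double extension, whence $d_0(F_2) = F_1$. I will reduce the reverse inclusion, via \autoref{prop:image}, to exhibiting a regular epimorphism $W\twoheadrightarrow d_0(D_1\wedge D_2)\wedge F_1$ whose composite with the inclusion into $X_1\times X_1$ factors through the equivalence relation $d_0(D_1\wedge D_2\wedge F_2)$.

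The starting point is the pullback $Q$ of $F_1\hookrightarrow X_1\times X_1$ along $d_0\times d_0\colon D_1\wedge D_2\to X_1\times X_1$: it comes with a regular epi $Q\twoheadrightarrow d_0(D_1\wedge D_2)\wedge F_1$. A generic $(u,v)\in Q$ only satisfies $d_0 f_2 u = d_0 f_2 v$ on top of $d_1 u=d_1 v$, $d_2 u=d_2 v$, so $f_2 u$ and $f_2 v$ share the same three faces in $Y_2$ and the tuple $(f_2 u,\,f_2 v,\,s_0 d_1 f_2 u,\,s_0 d_2 f_2 u)$ defines a morphism $\Phi\colon Q\to K_3(\Y)$. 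Verifying this amounts to checking the six $(i,j)$-compatibility relations in $K_3(\Y)$, all of which reduce to the identities $d_0 s_0=d_1 s_0=1$, $d_2 s_0=s_0 d_1$, $d_i d_j=d_{j-1}d_i$ (for $i<j$), and the defining conditions of $Q$. Since $\Y$ is exact at $Y_2$, $\kappa_3^\Y$ is a regular epimorphism; pulling $\Phi$ back along it produces a regular epi $Q'\twoheadrightarrow Q$ carrying a $3$-simplex $z\in Y_3$ realising the tuple.

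To lift $z$ to $X_3$ with prescribed $d_2, d_3$ faces I use \autoref{lem:reg_po_simp} once more: the triple-extension structure of the cube induced by $f$ at dimension $3$ (as exploited in the proof of the corollary following that lemma) implies that the square with horizontal arrows $f_3$, $f_2\times_{f_1} f_2$ and vertical arrows $\langle d_2,d_3\rangle$ is a double extension, so $X_3\to Y_3\times_{Y_2\times_{Y_1}Y_2}(X_2\times_{X_1}X_2)$ is a regular epi. The pair $(s_0 d_1 u, s_0 d_2 u)$ lies in $X_2\times_{X_1}X_2$ and matches $(d_2 z, d_3 z)$ in $Y_2\times_{Y_1}Y_2$ by naturality of $s_0$ along $f$ combined with $d_2 s_0=s_0 d_1$ and $d_1 d_2=d_1 d_1$; a further pullback therefore yields a regular epi $W\twoheadrightarrow Q'$ equipped with $\tilde z\in X_3$ such that $f_3\tilde z=z$, $d_2\tilde z=s_0 d_1 u$, $d_3\tilde z=s_0 d_2 u$. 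Setting $u^*:=d_0\tilde z$, $v^*:=d_1\tilde z$, the simplicial identities together with $d_0 s_0=d_1 s_0=1$ yield $d_i u^*=d_i u$, $d_i v^*=d_i u=d_i v$ for $i=1,2$, as well as $f_2 u^*=f_2 u$, $f_2 v^*=f_2 v$ and $d_0 u^*=d_0 v^*$.

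Consequently $(u, u^*)$ and $(v^*, v)$ both lie in $D_1\wedge D_2\wedge F_2$, while $(u^*, v^*)\in D_0$. The three consecutive pairs $(d_0 u, d_0 u^*)$, $(d_0 u^*, d_0 v^*)\in\Delta_{X_1}$, and $(d_0 v^*, d_0 v)$ therefore all factor through $d_0(D_1\wedge D_2\wedge F_2)$, and transitivity of this equivalence relation forces the morphism $W\to X_1\times X_1$, $w\mapsto(d_0 u, d_0 v)$, to factor through it as well. The orthogonality of (regular epi, mono) applied to $W\twoheadrightarrow d_0(D_1\wedge D_2)\wedge F_1$ and $d_0(D_1\wedge D_2\wedge F_2)\hookrightarrow X_1\times X_1$ then supplies the missing inclusion, closing the proof. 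The main obstacle is the double pullback producing $\tilde z$ with the correct $d_2, d_3$ faces: it requires invoking exactness of $\Y$ at $Y_2$ and the higher-dimensional double-extension structure of $f$ in precisely the right order, after which the rest is routine simplicial-identity bookkeeping.
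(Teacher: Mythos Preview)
Your argument is correct and follows the same high-level strategy as the paper's proof---use exactness of $\Y$ at $Y_2$ to fill a $3$-boundary in $\Y$, then lift it partially to $\X_3$, and read off the desired relation from the faces of the lift---but the implementation is genuinely different in two respects.

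First, the paper works with a \emph{single} $2$-simplex $\alpha_1\in X_2$ satisfying $d_2\alpha_1=s_0\alpha_2$, relying on the alternative description of $d_0(D_1\wedge D_2)$ established in \autoref{sec:sets} via the pullback \eqref{diag:pb_homotopy}. You instead work directly with a pair $(u,v)\in D_1\wedge D_2$ whose $d_0$-images are $F_1$-related. This makes your starting data more symmetric and avoids invoking the auxiliary Section~\ref{sec:sets} results.

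Second, the two lifts to $X_3$ prescribe different data. The paper fixes $d_0,d_1,d_3$ and $f_3$ and fills the $(3,2)$-horn via the Kan-fibration map $\theta^3_2$; the ``missing'' face $d_2\alpha''$ is then shown to factor through the limit $L$ of \eqref{eq:limit}, again using Section~\ref{sec:sets}. You instead fix $d_2,d_3$ and $f_3$ and lift via the double extension $X_3\twoheadrightarrow Y_3\times_{Y_2\times_{Y_1}Y_2}(X_2\times_{X_1}X_2)$ coming from the triple-extension cube of \autoref{lem:reg_po_simp}; the ``missing'' faces $u^*=d_0\tilde z$, $v^*=d_1\tilde z$ then give two elements of $D_1\wedge D_2\wedge F_2$ linking $u$ to $v$, and you conclude by transitivity of the equivalence relation $d_0(D_1\wedge D_2\wedge F_2)$ (which is legitimate since $\C$ is Mal'tsev, so the reflexive image relation is automatically an equivalence relation). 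Your route is thus more self-contained---it does not need the machinery of \autoref{sec:sets}---at the cost of a short transitivity step; the paper's route integrates better with the description of central extensions via $L$ used later in the monotone-light section.
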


\begin{proof}
	The inequality
	\[d_0(D_1\wedge D_2\wedge F_2)\leq d_0(D_1\wedge D_2)\wedge F_1\]
	always holds. To prove the converse, we consider the monomorphism $\phi=(\phi_1,\phi_2)$ into $X_1\times X_1$ corresponding to the equivalence relation $d_0(D_1\wedge D_2)\wedge F_1$. This relation is smaller than $d_0(D_1\wedge D_2)$, so that, by the characterization given in \autoref{prop:image} and the alternative construction of $d_0(D_1\wedge D_2)$ given in \autoref{sec:sets}, there must exist a regular epimorphism $p\colon Z\to d_0(D_1\wedge D_2)\wedge F_1$ and a morphism $\alpha=\langle\alpha_1,\alpha_2\rangle \colon Z\to X_2\times_{X_1}X_0$ such that $d_0\alpha_1 = \phi_1p$ and $d_1\alpha_1=\phi_2p$. Since, moreover, it is smaller than $F_1$, we have $f_1d_0\alpha_1=f_1d_1\alpha_1$, which can be rewritten $d_0f_2\alpha_1=d_1f_2\alpha_1$.
	
	Now consider the morphisms
	\begin{align*}
	y_0=y_1 & =s_1d_0f_2\alpha_1=s_1d_1f_2\alpha_1 \\
	y_2 & =s_0d_1f_2\alpha_1\\
	y_3 & =f_2\alpha_1.
	\end{align*}
	
	One can check that the identity $d_iy_j=d_{j-1}y_i$ holds for all $0\leq i<j\leq 3$, so that these morphisms determine a morphism $y$ from $Z$ to the third simplicial kernel $K_3(\Y)$, and we can consider the pullback
	\[\begin{tikzcd}
	Z' \pb \ar[d,"\alpha'"'] \ar[r,"p'"] & Z\ar[d,"y"]\\ Y_3 \ar[r,"\kappa_3"'] & K_3(\Y).
	\end{tikzcd}\]
	$\Y$ being exact at $Y_2$ means that $\kappa_3$ is a regular epimorphism, and, as a consequence, so is $p'$. Consider now the morphisms
	\[x_0=s_1d_0\alpha_1p'\]
	\[x_1=s_1d_1\alpha_1p'\]
	\[x_3=\alpha_1p',\]
	from $Z'$ to $X_2$. One can check that the identity $d_ix_j=d_{j-1}x_i$ holds for all $i<j$ and $i\neq 2\neq j$, thus they determine a morphism $x\colon Z'\to \Lambda^3_2(\X)$; and, moreover, we have
	\[d_i\alpha'=\mu_i\kappa_3 \alpha'=\mu_i yp'=y_ip'=f_2x_i\]
	for $i=0,1,3$, which implies that $\lambda^3_2\alpha'=\Lambda^3_2(f)x$. Thus $x$ and $\alpha'$ induce a morphism $Z'\to \Lambda^3_2(\X) \times_{\Lambda_2^3(\Y)} Y_3$. Consider then the pullback
	\[\begin{tikzcd} Z''\ar[d,"\alpha''"'] \ar[r,"p''"] & Z'\ar[d,"{\langle x,\alpha'\rangle}"]\\ X_3 \ar[r,"\theta^3_2"'] & \Lambda^3_2(\X)\times_{\Lambda^3_2(\Y)} Y_3.
	\end{tikzcd}\]
	Since $\theta^3_2$ is a regular epimorphism, so is $p''$, and by construction we have $d_i\alpha''=x_ip''$ for $i=0,1,3$ and $f_3\alpha''=\alpha' p''$. Now the morphism $d_2\alpha''\colon Z''\to X_2$ is such that
	\[f_2d_2\alpha''=d_2f_3\alpha''=d_2\alpha' p''=y_2p'p''=s_0d_1f_2\alpha_1 p' p''\]
	and
	\[d_2d_2\alpha''=d_2d_3\alpha''= d_2x_3p''=d_2\alpha_1 p' p''=s_0\alpha_2 p' p'';\]
	hence there exists a unique morphism $\beta\colon Z''\to L$ (where $L$ is defined by \eqref{eq:limit}) such that $\rho_1\beta= \alpha_2 p' p''$, $\rho_2\beta=d_2\alpha''$ and $\rho_3\beta=f_1d_1\alpha_1 p'p''$. Now we can check that
	\begin{align*} (d_0,d_1)\rho_2\beta & =(d_0,d_1)d_2\alpha''=(d_1d_0, d_1d_1)\alpha''=(d_1x_0,d_1x_1)p'' \\ & =(d_1 s_1d_0, d_1 s_1d_1 )\alpha_1p'p'' =(d_0,d_1)\alpha_1 p'p''\\ & = (\phi_1,\phi_2)pp'p''.\end{align*}
	This proves that $d_0(D_1\wedge D_2)\wedge F_1\leq d_0(D_1\wedge D_2\wedge F_2)$.
\end{proof}

\begin{lemma}
	If $\Y$ is exact, then $\Y$ is stabilizing : given any morphism $f\colon \X\to \Y$, the induced morphism $\langle f,\eta_\X \rangle \colon \X\to \Y\times_{\Pi_1(\Y)} \Pi_1(\X)$ is stably in $\E$.
\end{lemma}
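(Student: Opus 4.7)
Let $g\colon\Z\to L$ be a regular epimorphism in $\Simp(\C)$, and form the pullback
\[
\begin{tikzcd} \X' \pb \ar[r, "g'"] \ar[d, "e'"'] & \X \ar[d, "e"] \\ \Z \ar[r, "g"'] & L. \end{tikzcd}
\]
The goal is to show $\Pi_1(e')$ is an isomorphism. As preliminaries, the projection $\pi_1^L\colon L\to\Y$ is a regular epimorphism (as the pullback of $\eta_\Y$ along $\Pi_1(f)$), so $\tilde f := \pi_1^L\circ g\colon\Z\to\Y$ and $f\circ g' = \tilde f\circ e'\colon\X'\to\Y$ are regular epimorphisms. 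The Birkhoff square attached to $f$ is a double extension (since $f_n(H_n(\X)) = H_n(\Y)$ by the corollary following \autoref{lem:reg_po_simp}), so $e = \langle f,\eta_\X\rangle$ is a regular epimorphism, as is its pullback $e'$. Since $e'_0 = 1_{X_0}$ is an isomorphism and a morphism of internal groupoids is an isomorphism as soon as its $0$- and $1$-components are, it suffices to verify that $\Pi_1(e')_1$ is an isomorphism.

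The regular-epi half follows immediately because $\eta_\Z\circ e'_1$ is a composite of regular epimorphisms; the monomorphism half reduces to the inequality $(e'_1)^{-1}(H_1(\Z))\leq H_1(\X')$. To verify this, I would take $(x'_1,x'_2)\in (e'_1)^{-1}(H_1(\Z))$ with $x'_i=(z_i,x_i)$ and $(z_1,z_2)\in H_1(\Z)$, noting that $(x_1,x_2)\in H_1(\X)$ by naturality. The plan is to produce lifts $\gamma_i = (\zeta_i,\alpha_i)\in X'_2 = Z_2\times_{L_2}X_2$ with $d_0\gamma_i = x'_i$ and $(\gamma_1,\gamma_2)\in D_1^{\X'}\wedge D_2^{\X'}$, witnessing $(x'_1,x'_2)\in d_0(D_1^{\X'}\wedge D_2^{\X'}) = H_1(\X')$. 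Applying \autoref{lem:inequality_exact} to $\tilde f$ (valid since $\Y$ is exact at $Y_2$ and $\tilde f$ is regular epi) yields $(\zeta_1,\zeta_2)\in D_1^\Z\wedge D_2^\Z\wedge \tilde F_2$ with $d_0\zeta_i = z_i$, after a reduction modulo $\tilde F_1$ using modularity of the equivalence-relation lattice; an analogous application to $f$ produces $(\alpha_1,\alpha_2)\in D_1^\X\wedge D_2^\X\wedge F_2$ with $d_0\alpha_i = x_i$ and $f\alpha_i = \tilde f\zeta_i$. The pullback compatibility $g_2\zeta_i = e_2\alpha_i$ in $L_2$ will then follow from matching both the $\Y$-components (arranged above) and the $\Pi_1(\X)$-components (using $D_1\wedge D_2\leq H_2$ together with the pullback constraint $\bar g_1[z_i] = [x_i]$, where $\bar g\colon \Pi_1(\Z)\to\Pi_1(\X)$ is induced by $\pi_2^L g$).

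The main obstacle is the simultaneous orchestration of these lifts so that the $\Pi_1(\X)$-components of $g_2\zeta_i$ and $e_2\alpha_i$ agree at level $2$, which amounts to matching $\bar g_1[d_j\zeta_i] = [d_j\alpha_i]$ for each face $j$. For $j = 0$ this follows from the level-$1$ pullback constraint, but for $j = 1, 2$ it is not automatic and requires successive refinements of the lifts, using the double-extension property of the face-map squares (\autoref{thm:reg_PO_split}, applied with sections $s_0$ to the squares comparing $Z_\bullet$ and $L_\bullet$, and similarly for $X_\bullet$) together with further applications of \autoref{lem:inequality_exact} to absorb any mismatch into the kernel pairs. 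This is the sole step where exactness of $\Y$ is essential. Once level $1$ is handled, the higher levels follow from the nerve characterisation of internal groupoids.
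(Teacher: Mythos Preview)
Your reduction to showing that $\Pi_1(e')_1$ is an isomorphism is the same as the paper's, and the observation that only level~$1$ matters is correct. However, there is a genuine gap in the core of your argument, and the paper's route avoids it in an instructive way.

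First, since the Birkhoff square for $e'$ is a double extension, one has $(e'_1)^{-1}(H_1(\Z))=H_1(\X')\vee Eq[e'_1]$; hence proving $(e'_1)^{-1}(H_1(\Z))\leq H_1(\X')$ is equivalent to proving $Eq[e'_1]\leq H_1(\X')$. The paper chooses the latter formulation, and this is not just cosmetic: for $(x'_1,x'_2)\in Eq[e'_1]$ one has $z_1=z_2$, so no nontrivial lift $\zeta_i$ in $\Z$ is needed at all---the paper simply takes the degenerate $2$-simplex $s_0^{\Z}z_1$ as the $\Z$-component of the desired $\gamma_i$. Your choice to work with an arbitrary $(z_1,z_2)\in H_1(\Z)$ forces you into the ``orchestration'' problem you identify, and the proposed ``reduction modulo $\tilde F_1$ using modularity'' is not a valid step: \autoref{lem:inequality_exact} applied to $\tilde f$ only gives you lifts when $(z_1,z_2)\in H_1(\Z)\wedge\tilde F_1$, and in your generality there is no reason for $(z_1,z_2)\in\tilde F_1$.

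Second, and more seriously, even after restricting to $Eq[e'_1]$ (so that $(x_1,x_2)\in H_1(\X)\wedge F_1$), your appeal to \autoref{lem:inequality_exact} does not deliver what you claim. That lemma is an identity of equivalence relations: it tells you $(x_1,x_2)\in d_0(D_1\wedge D_2\wedge F_2)$, i.e.\ that some witnesses $\alpha_1,\alpha_2$ exist with $f_2\alpha_1=f_2\alpha_2$. It does \emph{not} tell you what the common value $f_2\alpha_i\in Y_2$ is, and in particular it does not arrange $f_2\alpha_i=\tilde f_2\zeta_i$ as you assert. The paper gets around this by using, instead of the bare equality of \autoref{lem:inequality_exact}, the explicit description of $d_0(D_1\wedge D_2\wedge F_2)$ as the image of $(d_0,d_1)\rho_2$ out of the limit $L$ of \eqref{eq:limit}: by construction $f_2\rho_2=s_0^{\Y}\rho_3$, so the witnesses come equipped with the information that their image in $Y_2$ is \emph{degenerate}. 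This is exactly what is needed to match the degenerate $\Z$-component $s_0^{\Z}z_1$ on the $\Y$-coordinate of $L_2$. The $\Pi_1(\X)$-coordinate is then handled not by ``successive refinements'' but by a direct computation showing $\eta_2\rho_2\alpha=s_0^{\Pi_1(\X)}d_0^{\Pi_1(\X)}\eta_2\rho_2\alpha$, using that $\overline{d_0},\overline{d_1}$ are jointly monic on $\Pi_1(\X)_2$ and the hypothesis $h_1\psi_1=h_1\psi_2$. Your final paragraph gestures at something like this, but neither the matching of the $\Y$-component nor that of the $\Pi_1(\X)$-component follows from the tools you cite; both require the finer data carried by the limit $L$, which your sketch does not invoke.
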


\begin{proof}
	To simplify the diagrams, we denote $\bP= \Y\times_{\Pi_1(\Y)} \Pi_1(\X)$. Let us consider a pullback square
	\begin{equation}\label{eq:pb2}\begin{tikzcd}
	\Q \pb\ar[d,"h"'] \ar[r,"g'"] & \X \ar[d,"{\langle f,\eta_\X \rangle}"] \\ \Z \ar[r,"g"'] & \bP \end{tikzcd}\end{equation}
	with $g$ a regular epimorphism in $\Simp(\C)$.
	
	We need to prove that $\Pi_1(h)\colon \Pi_1(\Q)\to \Pi_1(\Z)$ is invertible. Since it is a morphism between internal groupoids, it is enough to prove that $\Pi_1(h)_0$ and $\Pi_1(h)_1$ are invertible. Note that the functor $\Pi_1$ leaves the objects $X_0$ unchanged, and thus $\langle f_0,\eta_0\rangle$ is an isomorphism, and thus so are $h_0$ and $\Pi_1(h)_0$. So we only need to prove is that $\Pi_1(h)_1$ is an isomorphism.
	
	Since $\Grpd(\C)$ is a Birkhoff subcategory of $\Simp(\C)$ and $h$ is a regular epimorphism, the square
	\[\begin{tikzcd} \Q \ar[d,"h"']\ar[r,"\eta_\Q"] & \Pi_1(\Q) \ar[d,"\Pi_1(h)"] \\ \Z \ar[r,"\eta_\Q"']& \Pi_1(\Z)
	\end{tikzcd}\]
	is a double extension in $\Simp(\C)$, and thus the square
	\begin{equation}\begin{tikzcd}
	Q_1 \ar[d,"h_1"']\ar[r,"(\eta_\Q)_1"] & \frac{Q_1}{H_1(\Q)} \ar[d,"\overline{h_1}"] \\ Z_1 \ar[r,"(\eta_\Z)_1"']& \frac{Z_1}{H_1(\Z)}
	\end{tikzcd}\label{eq:pushout_square}\end{equation}
	is a (regular) pushout in $\C$. This already proves that $\overline{h_1}=\Pi_1(h)_1$ is a regular epi. Now if there exists a morphism $t\colon Z_1\to Q_1/H_1(\Q)$ such that $t h_1=(\eta_\Q)_1$, then using the universal property of the pushout \eqref{eq:pushout_square} we can construct a retraction for $\overline{h_1}$, which proves that it is an isomorphism. So we are left to prove that such a morphism $t$ exists; since $h_1$ is a regular epimorphism, it is enough to prove that $Eq[h_1]\leq H_1(\Q)$.
	
	To prove this, we denote $\psi_1,\psi_2\colon Eq[h_1]\to Q_1$ the two projections of the kernel pair. Then the commutativity of \eqref{eq:pb2} (or rather, the corresponding commutative square involving $h_1$ in $\C$) implies that
	\[g_1'(Eq[h_1])\leq Eq[\langle f_1,(\eta_\X)_1] = F_1 \wedge H_1(\X)=d_0(D_1\wedge D_2\wedge F_2)\]
	where the last equality is given by \autoref{lem:inequality_exact}. As a consequence, we know that there must exist a morphism $\alpha\colon A\to L$ and a regular epimorphism $p\colon A\to Eq[h_1]$ such that $(d_0,d_1)\rho_2\alpha=(g_1'\times g_1') (\psi_1,\psi_2)p$.
	
	We now prove that $\langle f_2,\eta_2\rangle \rho_2\alpha$ factors through a degeneracy of $\bP$. More precisely, we prove that
	\begin{equation}\label{eq:factor_deg}
	\langle f_2,\eta_2\rangle \rho_2\alpha=s^{\bP}_0d_0^{\bP}\langle f_2,\eta_2\rangle \rho_2\alpha.\end{equation}
	Since the degeneracy morphism $s_0^{\bP}$ is induced by those of $\Pi_1(\X)$ and $\Y$, it is enough to prove that $f_2\rho_2\alpha$ and $\eta_2\rho_2\alpha$ factorize in the same manner through $s_0^{\Y}$ and $s_0^{\Pi_1(\X)}$ respectively.
	
	By construction we must have
	\[s^{\Y}_0d_{0}^{\Y}f_2\rho_2\alpha= s^{\Y}_0d_{0}^{\Y}s^{\Y}_0\rho_3\alpha=s^{\Y}_0\rho_3\alpha=f_2\rho_2\alpha.\]
	On the other hand we have
	\[d^{\Pi_1(\X)}_0\eta_2 \rho_2\alpha=d^{\Pi_1(\X)}_0s_0^{\Pi_1(\X)}d^{\Pi_1(\X)}_0\eta_2\rho_2\alpha\]
	by the simplicial identities, and
	\begin{align*}d^{\Pi_1(\X)}_1\eta_2 \rho_2\alpha & =\eta_1d_1^{\X}\rho_2\alpha=\eta_1g_1'\psi_2p=g_1h_1\psi_2p=g_1h_1\psi_1p \\ & = \eta_1g_1'\psi_1p=\eta_1d_0^{\X}\rho_2\alpha=d^{\Pi_1(\X)}_0\eta_2 \rho_2\alpha \\ & = d^{\Pi_1(\X)}_1 s_0^{\Pi_1(\X)}d^{\Pi_1(\X)}_0\eta_2\rho_2\alpha.\end{align*}
	
	By construction, the two morphisms $d^{\Pi_1(\X)}_0,d^{\Pi_1(\X)}_1\colon \frac{X_2}{H_2(\X)} \to \frac{X_1}{H_1(\X)}$ are jointly monic, and thus these equalities imply that
	\[\eta_2 \rho_2\alpha=s_0^{\Pi_1(\X)}d^{\Pi_1(\X)}_0\eta_1\rho_2\alpha,\]
	and this in turn implies that \eqref{eq:factor_deg} hold. From this we find that
	\begin{align*}\langle f_2,\eta_2\rangle \rho_2\alpha & =s^{\bP}_0d_0^{\bP}\langle f_2,\eta_2\rangle \rho_2\alpha \\ & = s^{\bP}_0\langle f_1,\eta_1 \rangle d_0^{\X}\rho_2\alpha \\ & = s^{\bP}_0\langle f_1,\eta_1 \rangle g'_1 \psi_1 p \\ & = s^{\bP}_0 g_1h_1 \psi_1 p \\ & = g_2s^{\Z}_0 h_1 \psi_1 p. \end{align*}
	
	Since $Q_2$ is the pullback of $g_2$ along $\langle f_2,\eta_2\rangle$, there is a unique morphism $\alpha'\colon A\to Q_2$ such that $h_2\alpha' =s^{\Z}_0 h_1 \psi_1 p$ and $g_2'\alpha'= \rho_2\alpha$. From this, we find that
	\[g_1'd^{\Q}_0\alpha' = d^{\X}_0 g_2'\alpha'= d^{\X}_0\rho_2 \alpha = g_1'\psi_1 p \]
	and
	\[h_1d^{\Q}_0\alpha' = d^{\Z}_0 h_2\alpha' = d^{\Z}_0 s^{\Z}_0 h_1 \psi_1 p=h_1\psi_1 p,\]
	and since $g_1'$ and $h_1$ are jointly monic, we have $d^{\Q}_0 \alpha'=\psi_1 p$, and similarly $d^{\Q}_1 \alpha'=\psi_2 p$.
	
	Now we prove that $d_2^{\Q}\alpha'=s_0^{\Q}d^{\Q}_0d^{\Q}_2\alpha'$; from the definition of $Q_1$ it suffices to check that the identity holds after composing both sides with each of the morphisms $h_1$ and $g_1'$. We have
	\begin{align*}
	h_1s_0^{\Q}d^{\Q}_0d^{\Q}_2\alpha' & = d^{\Q}_2h_2s_0^{\Q}d^{\Q}_0\alpha' = d^{\Z}_2s_0^{\Z}d^{\Z}_0h_2\alpha' = s_0^{\Z}d^{\Z}_1d^{\Z}_0 s^{\Z}_0 h_1 \psi_1 p \\
	&  =  s_0^{\Z} d^{\Z}_1 h_1 \psi_1 p =d^{\Z}_2 s^{\Z}_0 h_1\psi_p = d_2^{\Z}h_2\alpha' = h_1 \alpha'\end{align*}
	and
	\begin{align*}
	g_1's_0^{\Q}d^{\Q}_0d^{\Q}_2\alpha' & = s_0^{\X}d^{\X}_0d^{\X}_2 g_2'\alpha' = s_0^{\X}d^{\X}_0d^{\X}_2 \rho_2\alpha = s_0^{\X}d^{\X}_0s^{\X}_0 \rho_1\alpha \\
	& = s_0^{\X} \rho_1\alpha = d^{\X}_2 \rho_2\alpha = d^{\X}_2 g_2'\alpha' = g_1' d^{\Q}_2 \alpha'
	\end{align*}
	
	Thus $\alpha'$ factorizes through the pullback of $s^{Q}_0$ along $d^{Q}_2$, and thus $(\psi_1,\psi_2)p=(d^{\Q}_0,d^{\Q}_1)\alpha'$ factorizes through the inclusion of $H_1(\Q)$ in $Q_1\times Q_1$, which concludes the proof.
\end{proof}

As a consequence, we then have
\begin{thm}
	If $\C$ is an exact Mal'tsev category, the Galois structure induced by the reflection of simplicial objects into internal groupoids admits a relative monotone-light factorization system for regular epimorphisms $(\E',\M^*)$, where $\E'$ is the class of morphisms stably inverted by $\Pi_1$ and $\M^*$ is the class of central extensions of this Galois structure.
\end{thm}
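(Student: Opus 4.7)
The plan is to verify the criterion of Carboni--Janelidze--Kelly--Par\'e and Chikhladze quoted just above: it suffices to exhibit, for every simplicial object $\X$ in $\Simp(\C)$, an effective $\F$-descent morphism $p_\X\colon C_\X\to \X$ whose domain $C_\X$ is a stabilizing object for the relative $(\E,\M)$-factorization system.

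The natural candidate for $p_\X$ is the d\'ecalage counit $\epsilon_\X\colon Dec(\X)\to\X$. This was observed in the preliminaries to be a regular epimorphism in $\Simp(\C)$. Since $\Simp(\C)$ is a functor category with values in the exact Mal'tsev category $\C$, it is itself Barr-exact, and so every regular epimorphism in $\Simp(\C)$ is an effective descent morphism; in particular $\epsilon_\X$ is of effective $\F$-descent.

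Next I would verify that $Dec(\X)$ is stabilizing. Because $\C$ is regular Mal'tsev, every simplicial object of $\C$ satisfies the Kan condition (recalled in the introduction), and this property clearly passes to $Dec(\X)$. Moreover $Dec(\X)$ is contractible via the extra degeneracies omitted in its construction, so by the cited Proposition~3.9 of \cite{EGoV12} it is exact. The preceding lemma then yields immediately that $Dec(\X)$ is stabilizing.

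With both hypotheses of the criterion verified for each $\X$, I obtain a relative monotone-light factorization system $(\E',\M^*)$ for $\F$. It only remains to identify the two classes: by the very construction of the stabilisation, $\E'$ is the class of morphisms whose pullback along every arrow in $\F$ still lies in $\E$, i.e.\ the class of morphisms stably inverted by $\Pi_1$; and $\M^*$, obtained as the class of morphisms locally in $\M$ along monadic extensions, is by definition the class of central extensions of the Galois structure. I do not expect any genuine obstacle: the real content has already been absorbed into the previous lemma on exact objects being stabilizing, and the present argument is a short assembly of that lemma with the exactness of $Dec(\X)$ and the effectiveness for descent of regular epimorphisms in the Barr-exact category $\Simp(\C)$.
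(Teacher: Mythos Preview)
Your proposal is correct and follows exactly the paper's approach: the theorem is stated as an immediate consequence (``As a consequence, we then have'') of the preceding lemma that exact simplicial objects are stabilizing, together with the exactness of $Dec(\X)$ via \cite{EGoV12} and the fact that $\epsilon_\X$ is a regular epimorphism (hence monadic, since every extension in an exact Mal'tsev category is monadic, as recalled in the preliminaries). You have spelled out precisely the assembly the paper leaves implicit.
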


\section{Truncated simplicial objects and weighted commutators}

For all $n\geq 2$, we can define a nerve functor $\Grpd(\C)\hookrightarrow \Simp_n(\C)$; this amounts to compose the usual nerver functor $\Grpd(\C)\to \Simp(\C)$ with the truncation functor $\Simp(\C)\to\Simp_n(\C)$. The characterization of groupoids in truncated simplicial objects is then identical.

Moreover, the construction of the equivalence relations $H_n(\X)$ does not depend on the objects $X_m$ for $m>n$. Thus $\Grpd(\C)$ can also be seen as a Birkhoff subcategory of $\Simp_n(\C)$, with the reflection defined in the same way, in the sense that the reflectors commute with the truncation functor. The characterization of central extensions also extends in the same way. Note that for $n=2$, truncated simplicial sets coincide with internal \emph{precategories} in the sense of \cite{J91a}.

The forgetful functor $\Grpd(\C)\hookrightarrow \Simp_1(\C)=\RG(\C)$ also coincides with the composition of the nerve functor with the truncation functor $\Simp(\C)\to \RG(\C)$, and it is also fully faithful \cite{CPP92}. On the other hand, this time the reflection does not commute with the truncation, as the construction of $H_1(\X)$ depends on $X_2$ and the face morphisms $X_2\to X_1$. In fact, the reflection $\RG(\C)\to \Grpd(\C)$ is obtained by taking the quotient of $X_1$ by the Smith-Pedicchio commutator $[D_0,D_1]_{SP}$ (\cite{P95}). The central extensions of reflexive graphs in exact Mal'tsev categories (with coequalizers) with respect to this adjunction have been characterized in \cite{DG18}. Note that this commutator is preserved by regular images, and is always smaller than the intersection; as a consequence, we always have the inequalities
\begin{equation}\label{eq:comm_ineq}[D_0,D_1]_{SP}\leq H_1(\X)=d_2(D_0\wedge D_1) \leq D_0\wedge D_1.\end{equation}
It turns out that this reflection can also be obtained by applying our results, at least when the category $\C$ is finitely cocomplete. 

Indeed, in that case the truncation functor $\Simp(\C)\to \RG(\C)$ is right adjoint to the $1$-skeleton functor $Sk_1$ \cite{D75}, which can be defined by taking left Kan extensions along the inclusion $\Delta_2^{op}\to \Delta^{op}$. Now since the inclusion $\Grpd(\C)\to \RG(\C)$ is the composition of the nerve functor and the truncation $Tr_1$, the functor $\Pi_1 Sk_1$ must be a left adjoint to this inclusion. Thus our results can be used to give an alternative description of the Smith-Pedicchio commutator as the equivalence relation $H_1(Sk_1(X_1,X_0,d_0,d_1,s_0))$.

Let us make this construction explicit. The object $X_2=(Sk_1(X_1,X_0,d_0,d_1,s_0))_2$ is the pushout $X_1+_{X_0}X_1$ of $s_0\colon X_0\to X_1$ along itself, with $s_0$ and $s_1$ the two canonical morphisms $X_1\to X_1+_{X_0}X_1$. In order to satisfy the simplicial identities we must then define $d_0$ to be the unique morphism for which $d_0s_0=1$ and $d_0s_1=s_0d_0$, which we denote $[1,s_0d_0]\colon X_1+_{X_0}X_1\to X_1$; similarly, we must have $d_1=[1,1]$ and $d_2=[s_0d_1,1]$.

In the case where $\C$ is not only exact Mal'tsev but also semi-abelian (\cite{JMT02,BB04}), there is, for every object $X$ of $\C$, an order-preserving bijection between equivalence relations on $X$ and normal subobjects of $X$, which is also compatible with regular images. Accordingly, our results can be easily translated in terms of normal subobjects, by replacing every kernel pair by the kernel of the corresponding morphism.

In the case where $X_0=0$ is the zero object in $\C$, $X_2$ is simply the coproduct $X_1+X_1$, and the face morphisms are just the morphisms $[1,0],[1,1],[0,1]$. Then our construction of $d_1(D_0\wedge D_2)$ is nothing but the Higgins commutator $[X_1,X_1]_H$ (which coincides with the Smith-Pediccio commutator $[\nabla_{X_1},\nabla_{X_1}]_{SP}$), as defined in \cite{H56,MM10}. In general, $d_1(D_0\wedge D_2)$ coincides with a weighted commutator (\cite{GJU14}) :
\begin{thm}
When $\C$ is a semi-abelian category, the subobject $d_1(Ker(d_0)\wedge Ker(d_2))$ coincides with the weighted commutator $[Ker(d_0),Ker(d_1)]_{s_0\colon X_0\to X_1}$.
\end{thm}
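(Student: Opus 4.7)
The plan is to prove both inclusions. The forward inclusion $[Ker(d_0),Ker(d_1)]_{s_0}\leq d_1(Ker(d_0)\wedge Ker(d_2))$ is obtained by constructing a canonical morphism from the weighted cosmash product into $X_2=X_1+_{X_0}X_1$; the reverse inclusion will then follow from the forward one applied to an appropriate quotient reflexive graph.

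Recall from \cite{GJU14} that the weighted commutator $[H,K]_w$, for normal subobjects $H,K$ of $Z$ and weight $w\colon W\to Z$, is the image in $Z$, under $[h,w,k]\colon H+W+K\to Z$, of the weighted cosmash product $H\diamond_W K$, defined as the kernel of the canonical comparison
\[H+W+K\longrightarrow (H+W)\times_W (W+K).\]

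For the forward inclusion, I would construct $\varphi\colon Ker(d_0)+X_0+Ker(d_1)\to X_2$ from the three canonical maps into the pushout: $s_1|_{Ker(d_0)}$, the common composite $s_0 s_0 = s_1 s_0$ on $X_0$, and $s_0|_{Ker(d_1)}$. Using the simplicial identities $d_0 s_0 = d_2 s_1 = \mathrm{id}$, $d_0 s_1 = s_0 d_0$, $d_2 s_0 = s_0 d_1$, combined with the vanishing of $d_0$ on $Ker(d_0)$ and of $d_1$ on $Ker(d_1)$, one checks that $d_0\varphi$ coincides with the composite $Ker(d_0) + X_0 + Ker(d_1) \twoheadrightarrow X_0 + Ker(d_1) \xrightarrow{[s_0, k_1]} X_1$, and symmetrically for $d_2\varphi$. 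Consequently $\langle d_0,d_2\rangle\varphi$ factors through the pullback $(Ker(d_0)+X_0)\times_{X_0}(X_0+Ker(d_1))$ via the cosmash comparison map (after identifying each factor $Ker(d_i)+X_0$ with $X_1$ through the split short exact sequence), so $\varphi$ sends $Ker(d_0)\diamond_{X_0}Ker(d_1)$ into $Ker(d_0)\wedge Ker(d_2)\subset X_2$. A parallel computation gives $d_1\varphi = [k_0, s_0, k_1]$, whose image in $X_1$ is precisely $[Ker(d_0),Ker(d_1)]_{s_0}$, yielding the forward inclusion.

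For the reverse inclusion, let $q\colon X_1 \to X_1' := X_1/[Ker(d_0),Ker(d_1)]_{s_0}$. Since $d_0$ and $d_1$ both kill $[Ker(d_0),Ker(d_1)]_{s_0}$ (as it lies inside $Ker(d_0)\wedge Ker(d_1)$), the quotient $q$ extends to a morphism of reflexive graphs, hence to a regular epimorphism $Sk_1(q)\colon X_2 \to X_2'$ by left-adjointness of $Sk_1$. The weighted commutator $[Ker(d_0'),Ker(d_1')]_{s_0'}$ vanishes in $X_1'$ by construction (using the functoriality of the cosmash construction along regular epimorphisms), so by the forward inclusion applied to $(X_1',X_0,d_0',d_1',s_0')$ we have $d_1'(Ker(d_0')\wedge Ker(d_2')) = 0$ in $X_1'$. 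Since $Sk_1(q)$ sends $Ker(d_0)\wedge Ker(d_2)$ into $Ker(d_0')\wedge Ker(d_2')$, the composite $q\circ d_1$ annihilates $Ker(d_0)\wedge Ker(d_2)$, giving $d_1(Ker(d_0)\wedge Ker(d_2)) \leq \ker q = [Ker(d_0),Ker(d_1)]_{s_0}$.

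The main obstacle is the key computational step of the forward inclusion: verifying that $\langle d_0,d_2\rangle\varphi$ really does agree with the cosmash comparison modulo the identifications $Ker(d_i)+X_0\to X_1$. This requires unwinding the pushout structure of $X_2$ and matching it carefully with the three-object coproduct, relying on the semi-abelian description of $X_1$ as a split extension of $X_0$ along both $d_0$ and $d_1$.
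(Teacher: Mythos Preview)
Your forward inclusion is correct and is essentially the paper's construction: your $\varphi$ is the map the paper calls $\gamma$, namely the morphism $X_0+K_0+K_1\to X_1+_{X_0}X_1$ induced on pushouts by $[s_0,k_0]$, $[s_0,k_1]$ and $1_{X_0}$ (your coordinates are permuted, but that is immaterial). Your verification that $\langle d_0,d_2\rangle\varphi$ factors through the cosmash comparison $\psi$ is exactly right, and yields $\varphi(Ker(\psi))\leq Ker(d_0)\wedge Ker(d_2)$.

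The reverse inclusion, however, contains a genuine logical error. You write that since the weighted commutator of the quotient graph vanishes, ``by the forward inclusion applied to $(X_1',X_0,d_0',d_1',s_0')$ we have $d_1'(Ker(d_0')\wedge Ker(d_2'))=0$''. But the forward inclusion only says $[Ker(d_0'),Ker(d_1')]_{s_0'}\leq d_1'(Ker(d_0')\wedge Ker(d_2'))$; knowing the left side is zero tells you nothing about the right side. What you would actually need here is precisely the reverse inclusion you are trying to prove, so the argument is circular.

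The paper avoids the two-inclusion strategy altogether by strengthening the forward step to an equality. The crucial observation is that $\varphi$ is a \emph{regular epimorphism} (it is the map induced on pushouts by the regular epis $[s_0,k_i]\colon X_0+K_i\to X_1$), and that the resulting cube over $\psi$ and $\langle d_0,d_2\rangle$ is a split epimorphism between double extensions, hence a triple extension. The induced comparison square
\[\begin{tikzcd} X_0+K_0+K_1 \ar[r,"\psi"]\ar[d,"\varphi"'] & (X_0+K_0)\times_{X_0}(X_0+K_1) \ar[d] \\ X_1+_{X_0}X_1 \ar[r,"{\langle d_0,d_2\rangle}"'] & X_1\times_{X_0}X_1\end{tikzcd}\]
is then a double extension, which gives $\varphi(Ker(\psi))=Ker(d_0)\wedge Ker(d_2)$ on the nose. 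Applying $d_1$ and using $d_1\varphi=[s_0,k_0,k_1]$ yields the equality of images directly, without any quotient-graph argument. This is the missing idea in your attempt.
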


\begin{proof}Let us denote $K_i\leq X_1$ the kernel of $d_i\colon X_1\to X_0$ (for $i=0,1$). We recall from \cite{GJU14} the construction of the weighted commutator $[K_0,K_1]_{X_0}$ : we first define the morphism $\psi$ as the morphism making the diagram
\[\begin{tikzcd} X_0+K_0+K_1 \ar[drr,bend left=15,"{[\iota_1,0,\iota_2]}"] \ar[ddr,bend right=15,"{[\iota_1,\iota_2,0]}"'] \ar[dr,"\psi" description]& & \\ & (X_0+K_0) \times_{X_0}(X_0+K_1) \pb \ar[r] \ar[d] & X_0+K_1\ar[d,"{[1,0]}"] \\ & X_0+K_0 \ar[r,"{[1,0]}"'] & X_0
\end{tikzcd}\]
commute. Then $[K_0,K_1]_{X_0}$ is the image of the kernel of $\psi$ under the morphism $[s_0,k_0,k_1]\colon X_0+K_0+K_1\to X_1$.

To prove the equivalence, consider the following commutative diagram :
\[\begin{tikzcd} X_0+K_0 \ar[d,"{[s_0,k_0]}"'] \ar[r,shift right,"{[1,0]}"'] & X_0 \ar[r,shift left,"\iota_1"]\ar[l, shift right,"\iota_1"'] \ar[d,equal] & X_0+K_1 \ar[l,shift left,"{[1,0]}"] \ar[d,"{[s_0,k_1]}"]\\
X_1 \ar[r,shift right,"{d_0}"'] & X_0 \ar[r,shift left,"s_0"]\ar[l, shift right,"s_0"'] & X_1. \ar[l,shift left,"{d_1}"]
\end{tikzcd}\]
Since all the vertical morphisms are regular epimorphisms, the induced morphism between the pushouts of the upper and lower spans (i.e. the cokernel pairs of $\iota_1$ and $s_0$), which we will denote by $\gamma$, is also a regular epimorphism. This gives a commutative cube

\[\begin{tikzcd}[column sep={4.4em,between origins},row sep={3.3em,between origins}]
X_0+K_0+K_1 \ar[dr,"{[\iota_1,\iota_2,0]}" pos=0.6]\ar[dd,"\gamma"']\ar[rr,"{[\iota_1,0,\iota_2]}"] & & X_0+K_1 \ar[dr,"{[1,0]}"]\ar[dd,"{[s_0,k_1]}"', near end] & \\
& X_0+K_0 \ar[rr,"{[1,0]}" pos=0.6,crossing over]& & X_0\ar[dd,equal]\\
X_1+_{X_0}X_1 \ar[rr,"{[1,s_0d_0]}"' pos=0.67] \ar[dr,"{[s_0d_1,1]}"'] & & X_1\ar[dr,"d_1"] & \\
& X_1\ar[from=uu,crossing over,"{[s_0,k_0]}"'pos=0.2]\ar[rr,"d_0"'] & & X_0\end{tikzcd}\]
where every edge is a regular epimorphism. In fact this cube is a triple extension, as it can be seen as a split epimorphism between (vertical) double extensions. As a consequence the induced square
\[\begin{tikzcd} X_0+K_0+K_1 \ar[r,"\psi"]\ar[d,"\gamma"'] & (X_0+K_0) \times_{X_0}(X_0+K_1) \ar[d] \\
X_1+_{X_0}X_1 \ar[r,"{\langle d_0,d_2\rangle}"'] & X_1 \times_{X_0} X_1 \end{tikzcd}\]
is a double extension; in particular, we have
\[\gamma(Ker(\psi))=Ker(\langle d_0,d_2\rangle )=Ker(d_0)\wedge Ker(d_2).\] Now we also have
\[d_1\gamma=[1,1] ([s_0,k_0]+[s_0,k_1])=[s_0,k_0,k_1],\]
and thus the image of $Ker(\psi)$ under $[s_0,k_0,k_1]$ is $d_1(Ker(d_0)\wedge Ker(d_2))$, which completes the proof.
\end{proof}

\begin{coro}For any reflexive graph in a semi-abelian category $\C$, the weighted commutator $[Ker(d_0),Ker(d_1)]_{s_0\colon X_0\to X_1}$ of the kernels of $d_0$ and $d_1$ coincides with their Ursini commutator $[Ker(d_0),Ker(d_1)]_{Urs}$ as defined by Mantovani in \cite{M12}.
\end{coro}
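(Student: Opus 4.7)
The plan is to reduce the corollary to the preceding theorem together with the earlier identification of the Smith--Pedicchio commutator as $H_1\circ Sk_1$ applied to a reflexive graph.

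First, I would apply the preceding theorem to the $1$-skeleton $Sk_1(\X)$ of the given reflexive graph $\X$: this is a simplicial object whose $0$- and $1$-components are those of $\X$ and whose degree-$2$ component is the pushout $X_1+_{X_0}X_1$. Under the standard bijection between equivalence relations on an object and normal subobjects that holds in any semi-abelian category, the theorem identifies the weighted commutator $[Ker(d_0),Ker(d_1)]_{s_0}$ with the normal subobject associated to the equivalence relation $d_1(D_0\wedge D_2)$ in $Sk_1(\X)$, which is by definition $H_1(Sk_1(\X))$.

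Second, I would invoke the identification recalled just before the preceding theorem, namely that $H_1(Sk_1(\X))$ coincides with the Smith--Pedicchio commutator $[D_0^\X,D_1^\X]_{SP}$ of the kernel pairs of the structural morphisms of $\X$. Translating back through the normal/effective bijection, our weighted commutator is then the normal subobject corresponding to $[D_0^\X,D_1^\X]_{SP}$.

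Finally, I would appeal to Mantovani's analysis in \cite{M12}: in a semi-abelian category, the Ursini commutator $[Ker(d_0),Ker(d_1)]_{Urs}$ of a reflexive graph is, by her construction, precisely this normal subobject. Chaining the three identifications together yields the claim.

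The main obstacle is the last step: one must check that Mantovani's explicit definition of the Ursini commutator agrees on the nose with the normal subobject corresponding to $[D_0^\X,D_1^\X]_{SP}$. The first two steps are formal consequences of the preceding theorem and of the discussion of $H_1\circ Sk_1$; the alignment with Mantovani's definition requires unpacking the constructive description of $[Ker(d_0),Ker(d_1)]_{Urs}$ given in \cite{M12} and verifying that it reproduces the same subobject as the image $d_1(D_0\wedge D_2)$ produced by our argument.
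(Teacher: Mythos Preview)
Your proposal is correct and follows essentially the same route as the paper: combine the preceding theorem with the identification $H_1(Sk_1(\X))=[D_0,D_1]_{SP}$, then invoke Mantovani's result that the Ursini commutator is the normalization of the Smith--Pedicchio commutator. The only remark is that what you flag as the ``main obstacle'' is not an obstacle at all: the paper simply cites this last identification as the content of \cite{M12}, so no further unpacking is needed.
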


\begin{proof}
This just follows from the fact that the Ursini commutator is the normalization of the Smith-Pedicchio commutator of the corresponding equivalence relations.
\end{proof}

We have shown that using the left adjoint of the truncation functor produced a simplicial object for which the first inequality of \eqref{eq:comm_ineq} is an equality, so that $H_1(\X)$ is as small as possible. We can also define a right adjoint $R$ to the truncation functor $T$, using right Kan extensions along the inclusion $\Delta_1^{op}\to \Delta^{op}$. Such a right extension amounts to iteratively define $X_n$ as the simplicial kernel of the truncated simplicial object $\begin{tikzcd} X_{n-1}\ar[r, shift left,"d_0"] \ar[r,shift right,"d_{n-1}"'] & X_{n-2}\dots \end{tikzcd}$, and the face morphisms $d_i\colon X_n\to X_{n-1}$ as the canonical projections. If we apply this construction, then the induced equivalence relation $H_1(\X)$ turns out to be equal to $D_0\wedge D_1$, so that this time $H_1(\X)$ is as big as possible. In fact we can prove something a bit more general :

\begin{prop}If $\X$ is a simplicial object exact at $X_1$, i.e. if $\kappa_2\colon X_2\to K_2(\X)$ is a regular epimorphism, then $d_0(D_1\wedge D_2)=D_0\wedge D_1$.
\end{prop}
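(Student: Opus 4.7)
The plan is to establish the two inequalities of $d_0(D_1\wedge D_2)=D_0\wedge D_1$ separately. The inclusion $d_0(D_1\wedge D_2)\leq D_0\wedge D_1$ holds unconditionally and can be read off directly from the simplicial identities: if $(\alpha,\beta)\colon A\to X_2\times X_2$ lies in $D_1\wedge D_2$, then $d_0d_1=d_0d_0$ and $d_0d_2=d_1d_0$ applied to $d_1\alpha=d_1\beta$ and $d_2\alpha=d_2\beta$ give $d_id_0\alpha=d_id_0\beta$ for $i=0,1$. This is also implicit in \autoref{lem:H_1}, since $d_0(D_1\wedge D_2)=H_1(\X)$ is by construction contained in $D_0\wedge D_1$.

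For the nontrivial inclusion I would invoke the image characterization \autoref{prop:image}. Write $f,g\colon D_0\wedge D_1\to X_1$ for the two components of the canonical inclusion $D_0\wedge D_1\hookrightarrow X_1\times X_1$, so that by construction $d_0f=d_0g$ and $d_1f=d_1g$. The goal is then to produce a regular epimorphism $p\colon W\twoheadrightarrow D_0\wedge D_1$ and a morphism $(\alpha,\beta)\colon W\to X_2\times X_2$ that factors through $D_1\wedge D_2$ and satisfies $(d_0\alpha,d_0\beta)=(fp,gp)$.

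The key step is to lift two asymmetric triples into $K_2(\X)$. The first, $(f,f,s_0d_1f)$, is already the image under $\kappa_2$ of $s_0f\colon D_0\wedge D_1\to X_2$. The second is $(g,f,s_0d_1f)$; the three compatibility conditions $d_0x_1=d_0x_0$, $d_0x_2=d_1x_0$, $d_1x_2=d_1x_1$ that define a point of $K_2(\X)$ are satisfied precisely because of $d_0f=d_0g$ and $d_1f=d_1g$ together with the degeneracy identities $d_0s_0=d_1s_0=1$. Since $\kappa_2$ is a regular epimorphism by the hypothesis of exactness at $X_1$, pulling back the resulting morphism $D_0\wedge D_1\to K_2(\X)$ along $\kappa_2$ provides a regular epimorphism $p\colon W\twoheadrightarrow D_0\wedge D_1$ and a lift $\beta\colon W\to X_2$ of this triple. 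Setting $\alpha=s_0f\circ p$, a direct computation gives $d_1\alpha=fp=d_1\beta$ and $d_2\alpha=s_0d_1fp=d_2\beta$, while $d_0\alpha=fp$ and $d_0\beta=gp$; hence $(\alpha,\beta)$ lies in $D_1\wedge D_2$ and exhibits $(fp,gp)$ as a regular preimage, from which \autoref{prop:image} yields $D_0\wedge D_1\leq d_0(D_1\wedge D_2)$.

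The main obstacle is choosing the right pair of triples in $K_2(\X)$. A more symmetric attempt using $(f,s_0d_0f,s_0d_1f)$ versus $(g,s_0d_0g,s_0d_1g)$ fails, because the compatibility $d_1x_2=d_1x_1$ applied to the first triple would force $d_1f=d_0f$, which is not available. The asymmetric choice above, whose second and third coordinates coincide across the two triples, is exactly what lets the hypotheses $d_0f=d_0g$ and $d_1f=d_1g$ cover every identity needed to land in $K_2(\X)$ while still producing, after lifting, a pair $(\alpha,\beta)$ in $D_1\wedge D_2$ with the required $d_0$-values.
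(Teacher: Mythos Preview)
Your proof is correct. The construction of the asymmetric triple $(g,f,s_0d_1f)$ in $K_2(\X)$, the lift along the regular epimorphism $\kappa_2$, and the pairing with the degenerate $\alpha=s_0fp$ all check out, and the appeal to \autoref{prop:image} is exactly the right tool to conclude.

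The paper's argument is organized differently: rather than building an explicit pair $(\alpha,\beta)$ in $D_1\wedge D_2$, it invokes the alternative description of $H_1(\X)=d_0(D_1\wedge D_2)$ established in Section~\ref{sec:sets} as the image of $(d_0,d_1)\pi_1\colon X_2\times_{X_1}X_0\to X_1\times X_1$, where the domain is the pullback \eqref{diag:pb_homotopy} of $d_2$ along $s_0$. It then identifies $K_2(\X)\times_{X_1}X_0$ (the pullback of $\nu_2$ along $s_0$) with $D_0\wedge D_1$ via a small diagram chase, and observes that the pullback of $\kappa_2$ along $s_0$ is a regular epimorphism, so the image of $(d_0,d_1)\pi_1$ agrees with $D_0\wedge D_1$. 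Your approach is more self-contained in that it does not rely on the Section~\ref{sec:sets} reformulation of $H_1(\X)$; the paper's approach is more economical once that reformulation is in hand. Underneath, both arguments are the same lift through $\kappa_2$: your map $(g,f,s_0d_1f)\colon D_0\wedge D_1\to K_2(\X)$ is precisely the inverse of the paper's isomorphism $K_2(\X)\times_{X_1}X_0\cong D_0\wedge D_1$.
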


\begin{proof}
Consider the following diagram, where all the squares are pullbacks :
\[\begin{tikzcd} X_2\times_{X_1} X_0 \pb \ar[d,"\pi_1"'] \ar[r,"q"] & K_2(\X)\times_{X_1} X_0 \pb \ar[d]\ar[r] & X_0\ar[d,"{s_0}"] \\ 
X_2 \ar[dr,"{\langle d_0,d_1\rangle}"']]\ar[r,"{\kappa_2}"'] & K_2(\X) \ar[d,"{(\nu_0,\nu_1)}"'] \pb \ar[r, "{\nu_2}"]& X_1 \ar[d,"{(d_0,d_1)}"]\\
& D_0 \ar[r,"{d_1\times d_1}"'] & X_0\times X_0. \end{tikzcd}\]
By definition, $\nu_2\kappa_2=d_2$, and thus the upper rectangle is the pullback of $d_2$ along $s_0$, i.e. it is the same pullback as in \eqref{diag:pb_homotopy}; thus $d_0(D_1\wedge D_2)$ is the image of the composition $\langle d_0,d_1\rangle \pi_1$. Since the upper left square is a pullback and $\kappa_2$ is a regular epimorphism by hypothesis, $q$ is a regular epimorphism, and thus the image of this morphism $\langle d_0,d_1\rangle \pi_1$ is the image of the middle vertical morphism. Moreover, the right-hand rectangle is the pullback of $d_1\times d_1$ along $\Delta_{X_0}$, and thus this middle vertical morphism is a monomorphism, and the corresponding subobject of $D_0$ coincides with $D_0\wedge D_1$, which concludes the proof.\end{proof}

\section*{Acknowledgements}
This work was part of my PhD, funded by a Research Fellowship of the FNRS. I would like to thank my PhD advisor Marino Gran for his suggestions and helpful guidance. I also thank the anonymous referee for their helpful comments.

\bibliography{biblio}
\bibliographystyle{abbrv}

\end{document}